\numberwithin{equation}{section}
\newtheorem{prop}{Proposition}
\numberwithin{prop}{section}
\numberwithin{definition}{section}
\newtheorem{theorem}{Theorem}
\numberwithin{theorem}{section}
\title{Nonabelian shift operators and shifted Yangians}
\author{Spencer Tamagni}
\affil{\textit{Center for Theoretical Physics, University of California, Berkeley}}
\date{\today}
\begin{document}
\maketitle 
\begin{abstract}
We introduce nonabelian analogs of shift operators in the enumerative theory of quasimaps. We apply them on the one hand to strengthen the emerging analogy between enumerative geometry and the geometric theory of automorphic forms, and on the other hand to obtain results about quantized Coulomb branch algebras. In particular, we find a short and direct proof that the equivariant convolution homology of the affine Grassmannian of $GL_n$ is a quotient of a shifted Yangian. 
\end{abstract}
\setcounter{tocdepth}{2}
\tableofcontents

\section{Introduction}
In this paper, we apply geometric tools to solve certain problems in representation theory. Our geometric tools allow us to give a novel identification of certain differential and difference equations that control enumerative generating functions, and as a rather basic application of them we are able to give a geometric characterization of comultiplication in quantized Coulomb algebras \cite{bfnslice}, \cite{finkelberg2017}, \cite{krylovperunov}. 

\subsection{Nonabelian shift operators}
The key new objects that we introduce and study in this paper are nonabelian analogs of shift operators. Shift operators are objects defined in enumerative geometry which provide a concrete bridge between the counting problems studied in that subject and questions of geometric representation theory. We will recall some background to explain this statement more fully, and then explain the sense in which we generalize this picture in the present paper. 

\subsubsection{}
The usual setup for studying shift operators is the following. Let $X$ be a smooth quasiprojective variety with the action of a torus $T$. In enumerative geometry one studies integrals (in cohomology or $K$-theory) over suitably compactified moduli spaces of maps $C \to X$, where $C$ is an algebraic curve that may be fixed or vary in moduli. For this paper, $C \simeq \mathbb{P}^1$ is fixed in moduli, and our preferred compactification is the moduli space of quasimaps as in \cite{cfkimmaulik} (so, in particular, we assume $X$ has a GIT quotient presentation). Then there is a group $\mathbb{C}^\times_q = \text{Aut}(C, 0, \infty)$ of automorphisms of $C$ preserving two marked points, which is important to incorporate into the problem. 

The principal objects of study in the modern theory are the \textit{vertex functions} in the terminology of \cite{okounkovpcmi}. These are the generating functions $\mathsf{Vertex}(z, a)$ of counts of rational curves in $X$ of all possible degrees. Counts are always performed in the virtual and equivariant sense; this means in particular that $\mathsf{Vertex}$ depends explicitly on variables $a$ valued in the torus $T$ (in the $K$-theoretic setting) or its Lie algebra (in the cohomological setting). It also depends on a set of variables $z$ whose role is to record the degree of the map $C \to X$; in particular there are $\text{rk} \, H^2(X; \mathbb{C})$ many $z$-variables. 

\subsubsection{}
One of the interests of the vertex functions is that they can be used to produce the fundamental solutions of differential and difference equations. Specifically, the $K$-theoretic vertex functions satisfy difference equations in all variables, and the vertex functions in cohomology satisfy differential equations in the $z$-variables and difference equations in the $a$-variables. We will focus on the latter.

Difference equations in the $a$-variables depend on a choice of cocharacter $\sigma: \mathbb{C}^\times \to T$. Given $q \in \mathbb{C}^\times_q$, a cocharacter provides on the one hand a way to shift the $a$-variables via $a \mapsto q^\sigma a$. On the other hand, we may use $\sigma$ together with the $T$-action on $X$ to define 
\begin{equation}
    \widetilde{X} = (X \times \mathbb{C}^2 \setminus \{0 \})/\,\mathbb{C}^\times
\end{equation}
which is an $X$-bundle over $C = \mathbb{P}^1$. The key observation leading to a geometric approach to difference equations is that the vertex function with shifted variables $\mathsf{Vertex}(z, q^\sigma a)$ may be interpreted analogously as a count of sections of the bundle $\widetilde{X}$; the usual $\mathsf{Vertex}(z, a)$ may indeed be viewed as the count of sections of a trivial $X$-bundle. 

\subsubsection{}
In particular, following this line of reasoning one can study the count of sections of $\widetilde{X}$ using the technique of degeneration of the base curve $C$, and this is precisely what is done in the $K$-theoretic setting in \cite{okounkovpcmi} when $X$ is a Nakajima quiver variety. This leads one to define certain operators $\mathbf{S}_\sigma$ acting in equivariant $K$-theory $K_{\mathbb{C}^\times_q \times T}(X)$ (or $H^\bullet_{\mathbb{C}^\times_q \times T}(X)$ in the cohomological setting, see \cite{MO}) which are referred to as shift operators and determine the difference equations in $a$-variables. The matrix coefficients of these operators may also be interpreted as counts of sections of $\widetilde{X}$, but the counts are defined using different moduli spaces. 

In \cite{okounkovpcmi}, the action of $\mathbf{S}_\sigma$ for minuscule $\sigma$ in the $K$-theoretic stable envelope basis is characterized via geometric $R$-matrices, and gives an explicit identification of the $q$-difference equation in the $a$-variables satisfied by $\mathsf{Vertex}(z, a)$ with the quantum Knizhnik-Zamolodchikov (qKZ) equation for a certain geometrically defined quantum group. 

\subsubsection{}
In this paper, we will take a new and different path towards identifying the difference equations, which involves certain wall-crossing formulas that we explain in Section \ref{geometricdiffeq}. Our techniques lead to a novel connection with quantum groups, which appear via the quantized Coulomb branch algebras that arise as their quotients \cite{bfnslice}. 

Geometrically, we will still be studying counts of sections of nontrivial $X$-bundles over $C$, and we will somewhat loosely refer to all such counts as ``shift operators'' acting on the vertex functions. The sense in which this is reasonable should become clear from context, but the reader should be warned that we deviate slightly from standard terminology in this sense. 

\subsubsection{}
The sense in which shift operators can be made ``nonabelian'' is the following. It is often the case that the torus $T$ acting on $X$ is a maximal torus in some larger reductive group $G$ which acts, and one may wish to work $G$-equivariantly rather than $T$-equivariantly. Since all counts in enumerative geometry factor through functors like $K_G(\text{---})$ or $H^\bullet_G(\text{---})$, at first sight one does not gain much since very often the only difference between $G$ and $T$-equivariance is taking invariants for the Weyl group $W$. 

Nonetheless, given a $G$-action on $X$ and a $G$-bundle on $C$, we are free to study the counts of sections of the $X$-bundle $\widetilde{X}$ associated to the $G$-bundle, and even versions of such counts as we allow the $G$-bundle to vary in moduli. In particular, given a point $0 \in C$ and a dominant cocharacter $\mu$ of $G$, we have a family of $G$-bundles on $C$ defined by the corresponding $G(\mathscr{O})$-orbit $G(\mathscr{O}) z^\mu$ inside the affine Grassmannian $\text{Gr}_G$ (we review these notions in more detail and give precise statements in Sections \ref{geometricdiffeq} and \ref{kthdiff}). We may study the counts of sections of the associated $\widetilde{X}$-bundles in this whole family, and denote the corresponding vertex functions by $\mathsf{Vertex}^\mu$. For the study of difference equations, it is sufficient to restrict to the case where $\mu$ is minuscule, which is a drastic simplification geometrically. 

We will refer to such counts of $\mu$-twisted maps, involving $X$-bundles associated to a $G$-bundle varying in moduli along an affine Grassmannian orbit, as nonabelian shift operators acting on vertex functions. 

\subsection{Statement of results}
\subsubsection{}
In a somewhat sketchy form, the main results of this paper are the following. First, in Section \ref{kthdiff} we study the counts of quasimaps to $X = T^*(GL_n/B)$ in equivariant $K$-theory. The vertex function $\mathsf{Vertex}(z, a)$ depends on variables $a \in A \subset GL_n$ valued in the maximal torus of $GL_n$, and variables $z \in A^\vee \subset GL_n^\vee$ valued in the Langlands dual torus. It depends on an additional variable $t \in \mathbb{C}^\times$ which is an equivariant variable associated to the scaling of the cotangent fibers of $T^*(GL_n/B)$, and a variable $q \in \mathbb{C}^\times$ which is an equivariant variable for the $\mathbb{C}^\times_q$ automorphism of the base curve mentioned above. 

A dominant minuscule cocharacter $\mu$ of $GL_n$ determines an orbit $G(\mathscr{O})z^\mu \simeq \text{Gr}(k, n)$ in the $GL_n$ affine Grassmannian and a representation $\Lambda^k(\mathbb{C}^n)$ of the dual group $GL_n^\vee \simeq GL_n$, which are mapped into each other via the geometric Satake equivalence \cite{ginzburg2000}, \cite{mirkovicvilonen}. We study the $\mu$-twisted quasimap counts, which count sections of an $X$-bundle associated to a $G$-bundle varying in moduli according to the orbit $G(\mathscr{O})z^\mu$, and show (Theorem \ref{heckeeigenval}) using geometric techniques developed in Section \ref{geometricdiffeq} that the count is in fact a multiple of the untwisted one
\begin{equation}
    \mathsf{Vertex}^\mu(z, a) = t^{\frac{1}{2} \dim \text{Gr}(k, n)}\chi_{\Lambda^k(\mathbb{C}^n)^*}(z_1 t^{\frac{n - 1}{2}}, \dots, z_nt^{\frac{1 - n}{2}}) \mathsf{Vertex}(z, a). 
\end{equation}
The explicit multiplier is the character of the representation of the dual group $GL_n^\vee$. On the other hand by a standard equivariant localization argument (Proposition \ref{vertexcomparison}), the twisted count $\mathsf{Vertex}^\mu(z, a)$ is related to the untwisted count by an equation of the form
\begin{equation}
    \mathsf{Vertex}^\mu(z, a) = \mathsf{Vertex}(z, a) \cdot \widehat{U}^{(a)}_\mu
\end{equation}
where $\widehat{U}^{(a)}_\mu$ is some explicit difference operator in the $a$-variables determined by the cocharacter $\mu$. In this paper we use the convention throughout that such difference operators act on functions from the \textit{right}. These two results together amount to a representation-theoretic identification of the difference equation in the $a$-variables.

In Section \ref{multmaps}, we study in a completely parallel fashion the cohomological counts to $X = GL_n/B$ (\textit{not} the cotangent bundle, this is not a typo), and obtain a very explicit formula (Theorem \ref{vertexint}) characterizing the cohomological vertex function as an intertwiner between two representations of the equivariant Borel-Moore homology $H_*^{\mathbb{C}^\times_q \ltimes G(\mathscr{O})}(\text{Gr}_G)$ of the $G = GL_n$ affine Grassmannian. The relation of this convolution algebra to integrable systems was first studied long ago by Bezrukavnikov, Finkelberg, and Mirkovic \cite{bfm}, and Theorem \ref{vertexint} gives a novel perspective on this connection which geometrizes the quantum inverse scattering method for the open Toda lattice (see \cite{sklyanin2000} for a review). The precise connection to the quantum inverse scattering method is the content of Theorem \ref{yangianquotient}, which connects the results of this paper directly to the study of comultiplication on Coulomb branch algebras \cite{bfnslice}, \cite{finkelberg2017}, \cite{krylovperunov}. 

\subsubsection{}
Identification of the difference equations satisfied by vertex functions is usually a nontrivial task, so at first sight it may seem that this paper just relates one difficult problem to another. One surprising simplification in the nonabelian situation is that there is a new set of tools to identify difference equations. Namely, for very elementary reasons explained in Section \ref{geometricdiffeq}, basic building blocks in all the moduli spaces relevant for computing the action of nonabelian shift operators are certain vector bundles over the minuscule $G(\mathscr{O})$-orbits (which collapse to points in the abelian case). 

These vector bundles come in families with Kahler moduli, and may undergo interesting flop transitions. One may study the wall-crossing behavior of various equivariant integrals over them. As the geometry of the minuscule Grassmannians is elementary, this is easy and turns out to give a strategy to deduce interesting identities among nonabelian shift operators. This technique is particularly powerful when $X$ is a quiver variety of type $A$, and we make heavy use of it in this paper to fully characterize the difference equations. The reader may notice that, for the study of complete flag varieties, we only need very basic special cases of the results we prove in Section \ref{geometricdiffeq}, and the inductive structure of the quiver. 

\subsection{Relation to Eisenstein series}
When the target space $X = G/B$ is a flag variety, a striking analogy emerges between enumerative geometry of curves in $X$ and the study of automorphic forms known as Eisenstein series \cite{laumon}, \cite{bravermangaits}, \cite{ko22}, \cite{ko24}. Nonabelian shift operators are closely related to this picture, in a sense we will now explain briefly and informally.

\subsubsection{}
If $C$ is now a more general algebraic curve defined over a finite field $\mathbb{F}_q$, automorphic forms are certain functions on the $\mathbb{F}_q$-points of the stack of bundles $\text{Bun}_G(C)$. For a variety $X$ with a $G$-action, a source of interesting functions are the counts of $\mathbb{F}_q$-points in moduli spaces of sections of $X$-bundles over $C$ associated to a given $G$-bundle. When $X = G/B$ such counts are automorphic forms referred to as Eisenstein series. If sections are replaced by quasisections, corresponding to compactifying maps by quasimaps, the counts are called geometric Eisenstein series (to be somewhat pedantic relative to the level of discussion here, in this paper we take $G = GL_n$ always and the notion of quasimaps to $G/B$ viewed as a quiver variety \cite{cfkimmaulik} corresponds to the quasimaps introduced by Laumon \cite{laumon}). 

\subsubsection{}
One is typically not interested in studying general functions on $\text{Bun}_G(C)$, but rather eigenfunctions of certain operators referred to as Hecke operators. Hecke operators take as input a point $p \in C$ and a cocharacter $\mu$ of $G$ that we may as well assume to be minuscule since $G = GL_n$. Fixing a bundle $\mathscr{E} \in \text{Bun}_G(C)$, this data determines a family of bundles via a $G(\mathscr{O})$-orbit in $\text{Gr}_G$ as above, which by construction are isomorphic to $\mathscr{E}$ upon restriction to $C \setminus p$. Hecke operators are defined by summing 
\begin{equation}
\mathscr{H}^\mu_pf(\mathscr{E}) = \sum_{\mathscr{E}'}f(\mathscr{E}')
\end{equation}
over all $\mathbb{F}_q$-points $\mathscr{E}'$ in the family. Hecke operators at different points commute, and Hecke eigenforms are functions which are eigenfunctions of all Hecke operators simultaneously. 

Eisenstein series are indeed Hecke eigenforms and the Langlands correspondence indexes such eigenforms by Galois representations, that is, local systems on $C$ for the Langlands dual group $G^\vee$. The local systems dual to geometric Eisenstein series are those with monodromy contained in a maximal torus $T^\vee \subset G^\vee$ \cite{laumon}, \cite{bravermangaits}. 

\subsubsection{}
At this point it must be clear that, essentially by definition, the nonabelian shift operators are precise analogs of Hecke operators in the geometric situation over $\mathbb{C}$, when the automorphic form is replaced by the vertex function. The results of Section \ref{kthdiff} provide an additional data point for the analogy of \cite{ko22}, \cite{ko24} and show that the $K$-theoretic vertex functions of $T^*(GL_n/B)$ are indeed eigenfunctions of nonabelian shift operators, with eigenvalues that are characters of $G^\vee$. In this scenario, we have returned to $C \simeq \mathbb{P}^1$ with equivariance, and in fact the result is purely local on the base curve.

We should note that, upon spelling out localization formulas, such a characterization of the difference equations for $\mathsf{Vertex}$ is essentially equivalent to that obtained earlier from the point of view of integrable systems in \cite{koroteevzeitlin} using explicit Mellin-Barnes type integral formulas for the vertex function, see also \cite{Koroteev_2021}, \cite{koroteev_2015}, \cite{Gaiotto_Koroteev}. Our approach is geometric and does not depend on integral formulas (though it is very closely related to them), so we hope that it will help build a bridge between integrability and the study of geometric automorphic forms. 

\subsubsection{}
It should be noted that categorification of the results in Section \ref{kthdiff} would appear to involve \textit{coherent} sheaves on $\text{Bun}_G(C)$, rather than the constructible sheaves which are usually used in the categorification of automorphic forms via the functions/sheaves correspondence. Ultimately, this is due to the fact that if one regards the enumerative computations we do here as certain computations in four-dimensional supersymmetric $G$-gauge theory along the lines of \cite{tam24} (see Appendix \ref{phys} for a lightning overview in the 3d context), they live in the holomorphic-topological twist \cite{kapustinht} of the 4d $\mathscr{N} = 2^*$ theory. The category of boundary conditions of this theory is very different in nature than that of the $A$-twisted topological gauge theory of \cite{kw} that is typically used to understand the automorphic side of geometric Langlands. 

\subsection{Application to Coulomb branches}
As another application of the techniques we develop here, we are able to gain insight into the representation theory of quantized Coulomb branch algebras. In this section we will provide some orientation and background to explain how our results fit into the (somewhat vast) existing literature on related topics. 

\subsubsection{}
Fix a quiver $Q$ of finite ADE type. If we frame the quiver and decorate the nodes and framing boxes with positive integers, it defines
a three-dimensional gauge theory with $\mathscr{N} = 4$ supersymmetry and the authors of \cite{bfn} give a mathematical definition of the Coulomb branch $\mathscr{M}_C$ of its moduli space of vacua. It is a holomorphic symplectic variety, and one may consider quantization $\widehat{\mathscr{M}}_C$ of its algebra of global functions (which is also described elegantly in the formalism of \cite{bfn}).

The algebra $\widehat{\mathscr{M}}_C$ is known to be a quotient of a quantum group called a shifted Yangian $\mathsf{Y}_{-\mu}(\mathfrak{g}_Q)$ based on the Lie algebra associated to the quiver $Q$ (the shift $\mu$ is a coweight determined by the node and framing
decoration of $Q$), see in particular Appendix B of \cite{bfnslice}. Proofs of this statement usually depend on presenting $\mathsf{Y}_{-\mu}(\mathfrak{g}_Q)$ by explicit generators and relations.

The results and general approach of this paper provide a different and complementary way to discover the Yangian symmetry underlying quantized Coulomb branch algebras $\widehat{\mathscr{M}}_C$. Yangians were introduced historically in the context of the quantum inverse scattering method, and it is in this fashion that they enter our analysis in Section \ref{multmaps}. In this way, the $RTT$ formalism \cite{frt} (also referred to as FRT formalism) for the quantum group emerges naturally in our enumerative computations. In fact, to our knowledge the $RTT$ formalism only exists at present for shifted
Yangians $\mathsf{Y}_{-\mu}(\mathfrak{g}_Q)$ in the case where $\mu$ is dominant \cite{Frassek_2022}; we plan to apply our geometric point of view to understand this issue in future work.

\subsubsection{}
The most basic reason for the relation of our work to Coulomb branches is that, essentially by construction, moduli spaces entering enumerative computations with nonabelian shift operators all have canonical maps to strata in the affine Grassmannian $\text{Gr}_G$ for the group $G$ acting on the target variety $X$. On the other hand, by the definition of Coulomb branches \cite{bfn}, elements of
the quantized Coulomb branch algebras are equivariant homology classes defined on spaces closely related to $\text{Gr}_G$. This opens up a pathway to relate the identification of the difference equations satisfied by $\mathsf{Vertex}$ to representation theory of quantized Coulomb branch algebras.

Our main result, Theorem \ref{vertexmult}, fully achieves this goal in the simplest family of examples when the relevant
Yangian $\mathsf{Y}_{-\mu}(\mathfrak{g}_Q)$ has $\mathfrak{g}_Q = \mathfrak{sl}_2$ and the shift $\mu$ is a multiple of the simple coroot $\alpha$.

\subsection{Outline of the paper}

\subsubsection{}
With the motivation and big picture explained, we can give a more detailed overview of what we actually do in this paper. In Section \ref{geometricdiffeq} we deduce wall-crossing formulas which are the technical basis of all difference equations we study. Section \ref{dilog} recalls basic facts about the simplest vertex functions, counting maps from $\mathbb{C}$ to a vector space. Section \ref{heckesection} gives an elementary discussion of the geometry behind nonabelian shift operators in this simple setting. Section \ref{diffeqflop} proves a wall-crossing formula for the Hirzebruch genera of moduli spaces that enter computations. Section \ref{satake} studies asymptotics of this wall-crossing formula from the point of view of the geometric Satake equivalence. 

Section \ref{kthdiff} revisits the difference equations for $K$-theoretic counts of quasimaps to $T^*(GL_n/B)$ in the sense of \cite{okounkovpcmi}. Section \ref{quiverquasi} recalls basic notions and definitions of the moduli spaces. Section \ref{vertexhecke} introduces the moduli spaces relevant for nonabelian shift operators and uses them to prove Theorem \ref{heckeeigenval}, stating that the vertex function is an eigenfunction of the nonabelian shift operators/Hecke operators. 

Section \ref{multmaps} descends to the cohomological setting to apply our techniques to the study of quantized Coulomb branch algebras \cite{bfn}, \cite{bfnslice}, in the simplest setting of the quantized Coulomb branch for pure gauge theory with $G = GL_n$. Section \ref{quantizedMc} defines and recalls basic features of the quantized Coulomb branch algebra $\widehat{\mathscr{M}}_C$. Section \ref{vertexmult} studies the interaction of the cohomological vertex function of $X = GL_n/B$ with nonabelian shift operators and deduces Theorem \ref{vertexint}. Section \ref{shiftedyang} elaborates on the sense in which Theorem \ref{vertexint} geometrizes the coproduct on the simplest shifted Yangian $\mathsf{Y}_{-n\alpha}(\mathfrak{sl}_2)$ in the $RTT$-formalism and gives a very short proof of Theorem \ref{yangianquotient}, that the quantized Coulomb branch algebra of pure gauge theory for $G = GL_n$ is indeed a quotient of $\mathsf{Y}_{-n \alpha}(\mathfrak{sl}_2)$ that we characterize fairly explicitly. 

\subsection{Future directions and relation to other work}

\subsubsection{Relation to prior work}
As is clear from the outline, in this paper we revisit enumerative geometry of flag varieties. This is itself a very well-studied subject and therefore there are many intersections of what we do here with previous work. In this section we acknowledge some of the work we believe has some relation to this paper in order to put the results in context.

The top left matrix component of our Theorem \ref{vertexint} recovers the celebrated characterization of the quantum cohomology and differential equation of $GL_n/B$ due to Givental-Kim \cite{Givental_1995} and Braverman \cite{braverman2004}, though our proof is totally different. See also the works of Koroteev and collaborators \cite{Koroteev_2021}, \cite{koroteev_2015} which discuss the $K$-theoretic vertex functions of $T^*(GL_n/B)$. 

The remarkable features of nonabelian shift operators we introduce in the context of quasimaps in this paper should be considered as an incarnation of a very general vision of Teleman \cite{teleman2014gauge}, see also the recent work of Gonzalez-Mak-Pomerleano \cite{gonzalezmakpomerleano}. Our nonabelian shift operators can be viewed as providing a concrete algebra-geometric setting for related computations in the context of quasimaps to quiver varieties, while the papers just referenced adopt a very different point of view based on symplectic topology. 

Also relevant is work of Hilburn and collaborators \cite{gammagehilburn}, \cite{gammagehilburnmazelgee} which provide mathematical foundations for the 3d $A$-model of topological field theory and (2-)categorical 3d mirror symmetry. The computations we perform in this paper may be viewed as particular computations of partition functions in the 3d $A$-model with boundary conditions and $\Omega$-deformation, see Appendix \ref{phys} and \cite{tam24} for details. As is typical in quantum field theory, it is much easier to perform computations than to give definitions. 

In the physics literature, the ideas we pursue are closely related to early works of Gaiotto, Koroteev and collaborators \cite{Cecotti_2014}, \cite{Gaiotto_Koroteev}, \cite{koroteev_2015}. See also the more recent paper by Ferrari and Zhang \cite{ferrarizhang} relating this setup to Berry connections. We should also point to the work of Bullimore, Dimofte, Gaiotto and Hilburn on boundary conditions in 3d $\mathscr{N} = 4$ gauge theories \cite{Bullimore_2016}. The conceptual origin for the Yangian symmetry of quantized Coulomb branch algebras is a certain universal setup involving D-branes in type IIA string theory considered by Costello, Gaiotto and Yagi \cite{Costello_2020}, \cite{costello2021qoperatorsthooftlines} and independently in unpublished work by Aganagic and Nekrasov \cite{aganagicnekrasov}; our computations fit into this framework and amount to detailed microscopic calculations using the effective theories living on defects. 

\subsubsection{Generalization to other Coulomb branches}
The main takeaway of Theorem \ref{vertexint} and its proof given in this paper is that the correct level of generality in which to study these structures is general with respect to the \textit{Coulomb branches}, rather than the target $X$ of the enumerative problem. That is to say, it is expected that an analog of Theorem \ref{vertexint} will hold for more general Coulomb branches $\mathscr{M}_C$ of quiver gauge theories (see \cite{bfn}, \cite{bfnslice} for definitions), but with the target $X$ always being a particular equivariant vector bundle over some flag variety. 

At the time of writing, we have proven the analog of Theorem \ref{vertexint} for all shifted Yangians of the form $\mathsf{Y}_{-\mu}(\mathfrak{sl}_2)$ with $\mu$ a dominant coweight, that is, all $A_1$ type quivers with rank $n$ gauge node and rank $m$ framing node with $m \leq 2n$. The $m = 2n$ case of this appears to be related to work of Lee and Nekrasov \cite{Lee_2021}. The details will appear in forthcoming joint work with S. Nair \cite{nairtam}. 

The structures we have found in general in \cite{nairtam} connect our computations with the theory of comultiplication on Coulomb branches and multiplication morphisms for generalized affine Grassmannian slices introduced in \cite{bfnslice} and studied in \cite{finkelberg2017}, \cite{krylovperunov}. We learned that a general conjecture of J. Hilburn implies that the enumerative computations of the kind we perform with nonabelian shift operators should always be related, by 3d mirror symmetry, to the multiplication morphisms for generalized affine Grassmannian slices. We aim to elaborate on and prove a version of this for ADE quiver gauge theories in \cite{nairtam}.

As a side remark, we note from \cite{afo} that there are very explicit Mellin-Barnes integral formulas for the quasimap vertex functions that we study. A corollary of our results is that these integrals for our choices of $X$ become explicit kernels for the quantized multiplication morphisms for Coulomb branches, though our proof of this result does not depend on the integral formula. This gives a geometric explanation of some observations of Gerasimov, Karchdev, and Lebedev made at the dawn of the subject \cite{gerasimov2003}, see also \cite{sklyanin2000}.

The multiplication morphisms are a great source of insight into the algebraic and geometric structures of Coulomb branches. Careful readers will note that the tools we develop in Sections \ref{geometricdiffeq} and \ref{multmaps} apply equally well to the vertex functions of all partial flag varieties $X = GL_n/P$, and give a characterization of their quantum differential and difference equations via Coulomb branches, which we elaborate on in \cite{nairtam}.

\subsection{Acknowledgements}
I am grateful to Yixuan Li, Yegor Zenkevich and Peng Zhou for useful discussions and feedback during the preparation of this work. Part of this work was completed during a visit to Columbia University, and I am grateful to the Department of Mathematics there for its hospitality. Results obtained in this project were announced at the Informal Mathematical Physics Seminar at Columbia (January 2024) and Berkeley Informal String-Math Seminar \cite{tamseminar}. 

I am especially grateful to Mina Aganagic and Andrei Okounkov for their interest in this project, providing very insightful feedback and emphasizing the role of the Satake isomorphism, as well as providing comments on an earlier draft. Special thanks also go to Vasily Krylov and Sujay Nair for many patient explanations on the multiplication morphisms, and to Sujay again for collaboration on \cite{nairtam}. I am also grateful to Justin Hilburn for generously sharing his ideas on related topics and his unpublished general conjectures on the structures we study here. 

\section{Genera, flops and difference equations} \label{geometricdiffeq}
There are two key geometric ingredients entering the difference and differential equations studied in this paper. The first, as reviewed in the introduction, is the incorporation of Hecke modifications of a $G$-bundle on the base curve $C$ to take into account a nonabelian symmetry group of the problem. The second is the invariance of certain equivariant genera under flop transformations. Both of these are illustrated in the following simplest nontrivial example, described at length in this section.

\subsection{Quantum dilogarithms and spaces of maps} \label{dilog}
Fix the base curve $C \simeq \mathbb{P}^1$, and let $U = \mathbb{P}^1 \setminus \{ \infty \}$. The group $\mathbb{C}^\times_q = \text{Aut}(\mathbb{P}^1, 0, \infty)$ acts preserving $U$. 

The infinite-dimensional vector space 
\begin{equation}
\mathsf{Maps}(\mathbb{C}_q \to \mathbb{C}_y) := H^0(U, \mathscr{O}_C) 
\end{equation}
carries an action of $\mathbb{C}^\times_q$ and an additional torus $\mathbb{C}^\times_y$ scaling the fiber of $\mathscr{O}_C$ with weight $-1$. With respect to this grading, it splits into an infinite direct sum of finite-dimensional subspaces. Corresponding to this fact, the equivariant Euler character of the structure sheaf on $\mathsf{Maps}(\mathbb{C}_q \to \mathbb{C}_y)$ converges to a well-defined analytic function in some region of the equivariant parameters:
\begin{equation}
    \chi(\mathsf{Maps}(\mathbb{C}_q \to \mathbb{C}_y), \mathscr{O}_{\mathsf{Maps}}) = \frac{1}{\varphi_q(y)}
\end{equation}
where 
\begin{equation}
    \varphi_q(y) := \prod_{n = 0}^\infty (1 - q^n y)
\end{equation}
and the infinite product on the right hand side is convergent for $|q| < 1$. The special function $\varphi_q(y)$ is referred to as the quantum dilogarithm or reciprocal $q$-Gamma function.

\subsubsection{} \label{charmaps}
It is convenient to work not only with $\mathscr{O}_{\mathsf{Maps}}$, but with the sheaves $\Omega^i_{\mathsf{Maps}}$ of polynomial holomorphic differential forms of degree $i$ on $\mathsf{Maps}$. For a parameter $t \in \mathbb{C}$, we have the following equivariant Euler character
\begin{equation}
\chi\Big( \mathsf{Maps}(\mathbb{C}_q \to \mathbb{C}_y), \sum_i (-t)^i \Omega^i_{\mathsf{Maps}} \Big) = \frac{\varphi_q(ty)}{\varphi_q(y)}
\end{equation}
which is computed by similar considerations as above. The $t \to 0$ limit recovers computations with $\mathscr{O}_{\mathsf{Maps}}$. 

\subsubsection{}
From the perspective of this paper, the $q$-difference equation
\begin{equation} \label{babydiff}
    \frac{1}{\varphi_q(y)} = \frac{1}{1 - y} \times  \frac{1}{\varphi_q(qy)}
\end{equation}
arises as a consequence of the exact sequence of sheaves on $C$
\begin{equation}
    0 \to \mathscr{O}_C(-1) \to \mathscr{O}_C \to \mathscr{O}_0 \to 0
\end{equation}
which is the ideal sheaf sequence associated to the subscheme $0 \in C$. Taking the character of the space of sections on $U$ gives exactly \eqref{babydiff}; the first factor on the RHS is from $\mathscr{O}_0$ and the second is from $\mathscr{O}_C(-1)$. 

\subsection{Behavior of sections under Hecke modification} \label{heckesection}
To describe the generalization of the above observation to higher rank bundles, it is convenient to review some notation and terminology used to describe Hecke modifications of vector bundles.

\subsubsection{} \label{affinegr}
Let $\mathscr{K} :=\mathbb{C}(\!( z )\!)$ denote the field of formal Laurent series in the variable $z$, and $\mathscr{O} := \mathbb{C}[\![ z]\!]$ the ring of formal Taylor series. The affine Grassmannian of a connected reductive group $G$ is defined abstractly as the moduli space of pairs $(\mathscr{P}, \varphi)$ where $\mathscr{P}$ is a $G$-bundle on $C$ and $\varphi: \mathscr{P} \eval_{C \setminus 0} \xrightarrow{\sim } \mathscr{P}^0 \eval_{C \setminus 0}$ is an isomorphism with the trivial bundle (in other words, a trivialization) away from $0 \in C$. Its set of $\mathbb{C}$-points may be presented as the quotient
\begin{equation} \label{affinegrass}
\text{Gr}_G := G(\mathscr{K})/G(\mathscr{O}). 
\end{equation}
The group $G(\mathscr{O})$ acts on $\text{Gr}_G$ from the left, and it admits a stratification by $G(\mathscr{O})$ orbits:
\begin{equation}
\text{Gr}_G = \bigsqcup_{\mu \in \Lambda^+_{\text{cochar}}} G(\mathscr{O}) z^\mu G(\mathscr{O})/G(\mathscr{O})
\end{equation}
where $\Lambda^+_{\text{cochar}}$ denotes the dominant cocharacters of $G$ and $z$ denotes the local coordinate on the affine chart $U \subset C$ (which by completing at $0 \in U$ we may also think of as a formal coordinate at zero). We introduce the standard notation $\text{Gr}^\mu_G := G(\mathscr{O}) z^\mu G(\mathscr{O})/G(\mathscr{O})$. It is easy to see that all the $G(\mathscr{O})$-orbits are finite dimensional. Likewise, it is easy to see that if $\mu$ is minuscule, the orbit $\text{Gr}^\mu_G$ is closed and isomorphic to a partial flag variety.

A bundle parameterized by a point in the affine Grassmannian is said to be obtained from the trivial bundle by a Hecke modification at $0$. A bundle corresponding to a point in $\text{Gr}^\mu_G$ is said to be obtained from the trivial bundle by a Hecke modification of type $\mu$. 

\subsubsection{}
For the applications considered in this paper, we will be concerned only with the group $G = GL_n$ and Hecke modifications associated to minuscule cocharacters. Dominant minuscule cocharacters of $GL_n$ are of the form $\mu = (1, \dots, 1, 0, \dots, 0)$ (or $(0, \dots, 0, -1, \dots, -1)$) for some number $k \leq n$ of $1$'s (or $-1$'s), which can be viewed canonically as highest weights of the $\Lambda^k(\mathbb{C}^n)$ representations of the Langlands dual group $G^\vee \simeq GL_n$ (or of the corresponding dual representations). 

An important sign convention we use in this paper is the following. If $\mu = (1, \dots, 1, 0, \dots, 0)$, then a bundle $\mathscr{W}^\mu_n$ is related to the rank $n$ trivial bundle by a Hecke modification of type $\mu$ at $0 \in C$ provided there is an exact sequence 
\begin{equation}
    0 \to \mathscr{O}_C^{\oplus n} \to \mathscr{W}^\mu_n \to \mathscr{O}_0 \otimes \mathbb{C}^k \to 0
\end{equation}
of sheaves on $C$. In this case, $\mathscr{W}^\mu_n$ corresponds to a point in $\text{Gr}^\mu_G \simeq \text{Gr}(k, n)$, the Grassmannian of $k$-dimensional subspaces in an $n$-dimensional vector space. The $\mathbb{C}^k$ factor above should be viewed canonically as the fiber of the rank $k$ tautological bundle $\mathscr{E}$ over $\text{Gr}(k, n)$ at the point corresponding to $\mathscr{W}^\mu_n$. It will be important in this paper to also study such sequences in families over $\text{Gr}^\mu_G$. 

\subsubsection{} \label{defineX}
Let $\mathscr{W}_n$ denote a trivial rank $n$ vector bundle over $C$. Introduce the infinite-dimensional variety of pairs
\begin{equation*}
    \mathcal{X}_{\mu, n} := \Bigg\{ (\mathscr{W}^{(\mu)}_n, s) \Big| \text{ $\mathscr{W}^{(\mu)}_n \in \text{Gr}^\mu_{GL_n}$ and $s \in H^0(U, \mathscr{W}^{(\mu)}_n)$} \Bigg\}.    
\end{equation*}
The notation $\mathscr{W}^{(\mu)}_n \in \text{Gr}^\mu_{GL_n}$ means that $\mathscr{W}^{(\mu)}_n$ is obtained from $\mathscr{W}_n$ by a Hecke modification of type $\mu$ at $0 \in \mathbb{P}^1$; it is indeed in natural correspondence with a point in $\text{Gr}^\mu_{GL_n}$ by the discussion above. 

Tracing through definitions, we conclude the following. First, $\text{Gr}^{(1, \dots, 1, 0, \dots, 0)}_{GL_n} \simeq \text{Gr}(k, n)$, the Grassmannian of $k$-dimensional subspaces in $\mathbb{C}^n$. This $\mathbb{C}^n$ is canonically isomorphic to the fiber of $\mathscr{W}_n$ over zero. Moreover, sections $s \in H^0(U, \mathscr{W}^{(\mu)}_n)$ are naturally identified with meromorphic vector-valued functions of the form 
\begin{equation}
    s(z) = \frac{s_{-1}}{z} + s_0 + s_1 z + \dots 
\end{equation}
where $s_{-1} \in \mathscr{E} \eval_{\mathscr{W}^{(\mu)}_n}$, the fiber of the rank $k$ tautological vector bundle $\mathscr{E}$ over $\text{Gr}(k, n)$ at the point $\mathscr{W}^{(\mu)}_n$. 

There is therefore a map 
\begin{equation} \label{XtoGr}
    \mathcal{X}_{\mu, n} \to \text{Tot}(\mathscr{E} \to \text{Gr}(k, n))
\end{equation}
given by forgetting all data except the Hecke modification and the value of the section at zero, the fibers of which are isomorphic to $\mathsf{Maps}(\mathbb{C}_q \to \mathbb{C}^n)$. 

\subsubsection{}
The group $\mathbb{C}^\times_q \times \text{Aut}(\mathscr{W}_n) \simeq \mathbb{C}^\times_q \times GL_n$ acts naturally on $\mathcal{X}_{\mu, n}$, and we are interested in performing computations in equivariant $K$-theory $K_{\mathbb{C}^\times_q \times GL_n}(\mathcal{X}_{\mu, n})$. 

As a preparation for this, it is convenient to review basic facts about the equivariant geometry of Grassmannians $\text{Gr}(k, n)$. The rank $k$ tautological bundle $\mathscr{E}$ fits into a short exact sequence with the trivial rank $n$ bundle
\begin{equation}
    0 \to \mathscr{E} \to \underline{\mathbb{C}}^n \to Q \to 0
\end{equation}
which defines the quotient bundle $Q$ and we have $T\text{Gr}(k, n) \simeq \text{Hom}(\mathscr{E}, Q)$. Fixed points of a maximal torus $T \subset GL_n$ correspond to the coordinate subspaces, which are indexed by subsets $I \subset \{1, \dots, n \}$ with $|I| = k$. For $(y_1, \dots, y_n) \in \text{Spec} \, K_T(\text{pt})$, we have 
\begin{equation*}
    \mathscr{E} \eval_{I} = \sum_{i \in I} y_i^{-1} \in K_T(\text{pt}).
\end{equation*}

\subsubsection{}
We are now in a position to state the following 

\begin{prop} \label{shift1}
We have the equivariant Euler character 
\begin{equation}
    \chi\Big( \mathcal{X}_{\mu, n}, \sum_i (-t)^i \Omega^i_{\mathcal{X}_{\mu, n}} \Big) = \sum_{\substack{I \subset \{1, \dots, n\} \\ |I| = k}} \prod_{i = 1}^n \frac{\varphi_q(t q^{-\delta_{i \in I}} y_i)}{\varphi_q(q^{- \delta_{i \in I}} y_i)} \prod_{\substack{i \in I \\ j \notin I}} \frac{1 - t y_j/y_i}{1 - y_j/y_i}.
\end{equation}
\end{prop}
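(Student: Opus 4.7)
The plan is to evaluate the Euler character by equivariant localization on the base Grassmannian. First, I would observe that the natural map $\pi: \mathcal{X}_{\mu,n} \to \text{Gr}^\mu_{GL_n} \simeq \text{Gr}(k,n)$ that forgets the section $s$ exhibits $\mathcal{X}_{\mu,n}$ as the total space of a $\mathbb{C}^\times_q \times GL_n$-equivariant (infinite-rank) vector bundle $V \to \text{Gr}(k,n)$ with fiber $H^0(U, \mathscr{W}^{(\mu)}_n)$ over the point $\mathscr{W}^{(\mu)}_n$. The $\mathbb{C}^\times_q$-action decomposes $V$ into a direct sum of finite-rank weight subbundles, so the Atiyah-Bott-Lefschetz formula applies on each weight piece and the resulting sum converges for $|q|<1$ in the same sense as the prototype characters of Section \ref{dilog}.

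Next, I would compute the contribution of each $T$-fixed point $I \subset \{1,\ldots,n\}$ with $|I|=k$. The cotangent weights of $\text{Gr}(k,n)$ at $I$ are $\{y_j/y_i : i \in I,\, j \notin I\}$, producing the factor
\begin{equation*}
    \prod_{i \in I,\, j \notin I} \frac{1 - t y_j/y_i}{1 - y_j/y_i}
\end{equation*}
by the standard localization formula for $\chi(X, \sum_i (-t)^i \Omega^i_X)$ applied to the base. The fiber $V|_I$ splits along the $T$-equivariant decomposition
\begin{equation*}
    \mathscr{W}^{(\mu)}_n|_I \;\simeq\; \bigoplus_{i \in I} \mathscr{O}_C(1) \otimes \mathbb{C}_{y_i^{-1}} \;\oplus\; \bigoplus_{i \notin I} \mathscr{O}_C \otimes \mathbb{C}_{y_i^{-1}},
\end{equation*}
and its Euler character factors as a product over $i = 1, \ldots, n$. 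For $i \notin I$ one recovers the prototype of Section \ref{charmaps} verbatim, giving $\varphi_q(ty_i)/\varphi_q(y_i)$. For $i \in I$ the only change is that the space of sections now contains the additional basis element $z^{-1}$, which carries $\mathbb{C}^\times_q$-weight $q$ and therefore enlarges the index range of the defining product from $k \geq 0$ to $k \geq -1$; this yields $\varphi_q(tq^{-1}y_i)/\varphi_q(q^{-1}y_i)$. Combining the two cases gives $\prod_i \varphi_q(tq^{-\delta_{i\in I}}y_i)/\varphi_q(q^{-\delta_{i\in I}}y_i)$, and summing over $I$ completes the argument.

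The one subtle point in this plan is careful bookkeeping of the $\mathbb{C}^\times_q$-weight carried by the pole coefficient $s_{-1}$ in the Laurent expansion of $s$, which is exactly what forces the $q^{-1}$ shift inside $\varphi_q$ for indices $i \in I$; once this is settled, the remainder is a routine combination of the Atiyah-Bott-Lefschetz formula on $\text{Gr}(k,n)$ with the elementary character computation of Section \ref{charmaps}.
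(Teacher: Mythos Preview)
Your proposal is correct and follows essentially the same strategy as the paper: push forward to $\text{Gr}(k,n)$, apply equivariant localization at the $T$-fixed points, and compute the fiber contributions as products of $\varphi_q$-ratios via the elementary computation of Section~\ref{charmaps}. The only cosmetic difference is that the paper routes the pushforward through the intermediate map \eqref{XtoGr} to $\text{Tot}(\mathscr{E}\to\text{Gr}(k,n))$ (separating off the polar coefficient $s_{-1}$ first, with remaining fiber $\mathsf{Maps}(\mathbb{C}_q\to\mathbb{C}^n)$), whereas you go directly to $\text{Gr}(k,n)$ and decompose the full fiber $H^0(U,\mathscr{W}_n^{(\mu)})$ line-by-line at each fixed point; both organizations yield the same localization sum.
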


\begin{proof}
Use the map \eqref{XtoGr} to pushforward the computation to $\text{Gr}(k, n)$ and apply equivariant localization on the Grassmannian. Use the discussion in Section \ref{charmaps} to compute the contributions from the fiber directions, which comprise the $\varphi_q$-factors in the sum.
\end{proof}

Introducing the shift operator 
\begin{equation}
    \widehat{U}_{\Lambda^k(\mathbb{C}^n)} := \sum_{\substack{I \subset \{ 1, \dots, n \} \\ |I| = k}} \prod_{i \in I} q^{D_i} \prod_{\substack{i \in I \\ j \notin I}} \frac{1 - t y_j/y_i}{1 - y_j/y_i}
\end{equation}
where $q^{D_i} \cdot y_j = q^{\delta_{ij}} y_j$, a corollary of Proposition \ref{shift1} and the existence of the map \eqref{XtoGr} is 
\begin{equation}
    \chi(\mathsf{Maps}(\mathbb{C}_q \to \mathbb{C}^n)) \cdot \widehat{U}_{\Lambda^k(\mathbb{C}^n)} = \chi(\text{Tot}(\mathscr{E} \to \text{Gr}(k, n))) \chi(\mathsf{Maps}(\mathbb{C}_q \to \mathbb{C}^n)) 
\end{equation}
where we introduced the abbreviation $\chi(X) := \chi(X, \sum_i (-t)^i \Omega^i_X)$ and the difference operator acts from the right. This equation contains no more content than the elementary difference equation \eqref{babydiff}, which is the basic reason why one typically studies Hecke modifications taking place in a maximal torus in the usual enumerative theory of quasimaps. We note that in spelling out localization formulas, it is important to take into account that it follows intrinsically from the definition of $\text{Gr}^\mu_{GL_n}$ as a moduli space that $\mathbb{C}^\times_q$ scales the fiber of $\mathscr{E}$ with weight $+1$. 

The $\widehat{U}_{\Lambda^k(\mathbb{C}^n)}$ operators are sometimes referred to as Macdonald operators and are one of the most important examples of quantized ($K$-theoretic) Coulomb branch operators from \cite{bfn}, \cite{bfnslice} that we review in Section \ref{multmaps}. 

Our goal in the rest of this section will be to use this geometric point of view to characterize more interesting difference equations satisfied by products of ratios of $\varphi_q$-functions, which do not follow immediately from the elementary $q$-difference equation \eqref{babydiff}. 

\subsection{Difference equations from flops} \label{diffeqflop}
The expression 
\begin{equation}
    \chi(X) := \chi \Big(X, \sum_i (-t)^i \Omega^i_X \Big)
\end{equation}
is known as the Hirzebruch genus of $X$. In practice, $X$ may very well be presented as a GIT quotient for a particular choice of a stability condition, and one of the most basic questions in wall-crossing concerns the behavior of such genera under variation of stability condition. In this section we will produce explicit wall-crossing formulas for specific $X$ arising in the context of Hecke modifications, and show that these formulas may be reinterpreted as explicit difference equations for products of ratios of $\varphi_q$-functions. 

\subsubsection{}
A straightforward generalization of the discussion in Section \ref{defineX} is to incorporate yet another trivial vector bundle $\mathscr{V}_m$ on the base curve $C$ of some rank $m$. Define the corresponding variety of pairs, for a minuscule cocharacter $\mu$ of $GL_n$, as
\begin{equation} \label{Xmunm}
\mathcal{X}_{(\mu, n), m} := \Bigg\{ (\mathscr{W}^{(\mu)}_n, s) \Bigg| \text{$\mathscr{W}^{(\mu)}_n \in \text{Gr}^\mu_{GL_n}$ and $s \in H^0(U, \mathscr{H}om(\mathscr{V}_m, \mathscr{W}^{(\mu)}_n))$} \Bigg\}. 
\end{equation}
Essentially the same discussion as in Section \ref{defineX} produces a map 
\begin{equation}
    \mathcal{X}_{(\mu, n), m} \to \text{Tot}(\text{Hom}(\mathbb{C}^m, \mathscr{E}) \to \text{Gr}(k, n))
\end{equation}
where we identify $\mathbb{C}^m$ as the fiber of $\mathscr{V}_m$ over zero. 

There is a natural action on $\mathbb{C}^\times_q \times \text{Aut}(\mathscr{W}_n) \times \text{Aut}(\mathscr{V}_m) \simeq \mathbb{C}^\times_q \times GL_n \times GL_m$ on $\mathcal{X}_{(\mu, n), m}$ and we perform computations in equivariant $K$-theory with respect to these symmetries. Fix maximal tori $T_y \subset GL_n$ and $T_x \subset GL_m$. A corollary of the proof of Proposition \ref{shift1} is that
\begin{equation}
    \chi(\mathcal{X}_{(\mu, n), m}) = \sum_{\substack{I \subset \{1, \dots, n \} \\ |I| = k}} \prod_{i = 1}^n \prod_{\ell = 1}^m \frac{\varphi_q(tq^{-\delta_{i \in I}} y_i/x_\ell)}{\varphi_q(q^{-\delta_{i \in I}}y_i/x_\ell)} \prod_{\substack{i \in I \\ j \notin I}} \frac{1 - ty_j/y_i}{1 - y_j/y_i}. 
\end{equation}

Equivalently, we have the following trivial difference equation satisfied by the Hirzebruch genera:
\begin{equation}
    \chi(\mathsf{Maps}(\mathbb{C}_q \to \text{Hom}(\mathbb{C}^m, \mathbb{C}^n)) \cdot \widehat{U}^{(y)}_{\Lambda^k(\mathbb{C}^n)} = \chi(\text{Tot}(\text{Hom}(\mathbb{C}^m, \mathscr{E}) \to \text{Gr}(k, n))) \chi(\mathsf{Maps}(\mathbb{C}_q \to \text{Hom}(\mathbb{C}^m, \mathbb{C}^n))). 
\end{equation}
The difference equation is called trivial because it follows directly from \eqref{babydiff}, and the superscript on $\widehat{U}$ is to remind us that it acts on the $y$-variables.

\subsubsection{} \label{Xkn}
When $m = n$, a symmetric role is played by $\mathscr{V}_n$ and $\mathscr{W}_n$. The space 
\begin{equation}
    X(k, n) := \text{Tot}(\text{Hom}(\mathbb{C}^n, \mathscr{E}) \to \text{Gr}(k, n))
\end{equation}
admits the following description as a GIT quotient:
\begin{equation}
    X(k, n) = \{ (I, J) \in \text{Hom}(\mathbb{C}^n_x, K) \oplus \text{Hom}^{\text{st}}(K, \mathbb{C}^n_y) \}/GL(K)
\end{equation}
where $\text{Hom}^{\text{st}}$ denotes linear maps of the maximal rank, and the vector space $K$ has dimension $k \leq n$. The vector space $\mathbb{C}^n_x$ is identified with the fiber of $\mathscr{V}_n$ over zero, and likewise $\mathbb{C}^n_y$ is identified with the fiber of $\mathscr{W}_n$ over zero. Tracing through definitions, $\mathbb{C}^\times_q$ scales the $\text{Hom}(\mathbb{C}^n_x, K)$ directions on the prequotient with weight $+1$ and acts trivially on the other directions. 

Viewed from this perspective, the symmetry between $\mathscr{V}_n$ and $\mathscr{W}_n$ is broken by the stability condition, and may be restored by comparing to computations on the flop 
\begin{equation}
    X^\vee(k, n) := \{ (I, J) \in \text{Hom}^{\text{st}}(\mathbb{C}^n_x, K) \oplus \text{Hom}(K, \mathbb{C}^n_y) \}/GL(K).
\end{equation}
$X^\vee$ and $X$ are abstractly isomorphic as algebraic varieties simply by exchanging the roles of $I$ and $J$, but this isomorphism is not $\text{Aut}(\mathscr{V}_n) \times \text{Aut}(\mathscr{W}_n)$-equivariant. This fact will play a key role in all that follows. 

We have a tautological bundle $\mathscr{K}$ on both $X, X^\vee$ induced from the trivial bundle with fiber $K$ on the prequotient, and the following formula for the equivariant $K$-theory class of the tangent bundle:
\begin{equation} \label{TX}
    TX = \mathbb{C}^n_y \otimes \mathscr{K}^* + q \mathscr{K} \otimes (\mathbb{C}^n_x)^* - \mathscr{K} \otimes \mathscr{K}^*
\end{equation}
valid for both $X$ and $X^\vee$. 

Fixed points in both $X$ and $X^\vee$ under the maximal torus $\mathbb{C}^\times_q \times T_x \times T_y \subset \mathbb{C}^\times_q \times \text{Aut}(\mathscr{V}_n) \times \text{Aut}(\mathscr{W}_n)$ are indexed by subsets $I \subset \{1, \dots, n \}$ with $|I| = k$. On $X$, we have 
\begin{equation}
    \mathscr{K} \eval_I = \sum_{i \in I} y_i^{-1} \in K_{\mathbb{C}^\times_q \times T_x \times T_y}(\text{pt}) 
\end{equation}
while on $X^\vee$, the formula is
\begin{equation}
    \mathscr{K} \eval_I = \sum_{\ell \in I} (qx_\ell)^{-1} \in K_{\mathbb{C}^\times_q \times T_x \times T_y}(\text{pt}). 
\end{equation}

\subsubsection{}
Now we may state 

\begin{theorem} \label{flopinv}
The Hirzebruch genus of $X(k, n)$ is invariant under flop, 
\begin{equation}
    \chi\Big( X, \sum_i (-t)^i \Omega^i_X \Big) = \chi\Big( X^\vee, \sum_i (-t)^i \Omega^i_{X^\vee} \Big). 
\end{equation}
\end{theorem}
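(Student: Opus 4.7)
The plan is to prove Theorem \ref{flopinv} by direct equivariant localization with respect to the maximal torus $\mathbb{C}^\times_q \times T_x \times T_y$. Both $X$ and $X^\vee$ share the same set of torus fixed points, indexed by subsets $I\subset\{1,\ldots,n\}$ of size $k$, and the tangent K-theory class in \eqref{TX} has the same abstract form on both chambers. Consequently, the Atiyah-Bott-Lefschetz formula for the Hirzebruch $\chi_{-t}$-genus reduces the theorem to an explicit rational function identity in $q,t,x,y$, in which the two sums differ only because $\mathscr{K}|_I$ restricts to $\sum_{i\in I} y_i^{-1}$ on $X$ but to $\sum_{i\in I}(qx_i)^{-1}$ on $X^\vee$. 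Unpacking, this identity reads
\begin{equation*}
\sum_{|I|=k}\prod_{\substack{a\notin I\\j\in I}}\frac{1-ty_a/y_j}{1-y_a/y_j}\prod_{\substack{i\in I\\1\le\ell\le n}}\frac{1-ty_i/(qx_\ell)}{1-y_i/(qx_\ell)}=\sum_{|I|=k}\prod_{\substack{\ell\notin I\\i\in I}}\frac{1-tx_i/x_\ell}{1-x_i/x_\ell}\prod_{\substack{i\in I\\1\le a\le n}}\frac{1-ty_a/(qx_i)}{1-y_a/(qx_i)}.
\end{equation*}

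My approach to this identity is to realize both sides as Jeffrey-Kirwan residues of a single meromorphic $k$-form on the Cartan torus $(\mathbb{C}^\times)^k\subset GL(K)$, arising from the equivariant Hirzebruch integrand on the prequotient $\mathrm{Hom}(\mathbb{C}^n_x,K)\oplus\mathrm{Hom}(K,\mathbb{C}^n_y)$. The two GIT stability conditions defining $X$ and $X^\vee$ correspond to opposite JK chambers, so the two residue sums differ exactly by the residue at infinity of the common integrand. Each Cartan variable appears with balanced numerator and denominator degree, reflecting the symmetry $\dim\mathbb{C}^n_x=\dim\mathbb{C}^n_y=n$ and the structure of \eqref{TX}; the integrand therefore extends regularly to infinity and the residue there vanishes, yielding flop invariance. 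Alternatively, and more geometrically, one can observe that the map $(I,J)\mapsto JI$ identifies $X$ and $X^\vee$ as the two small resolutions of the determinantal variety $\{M\in\mathrm{Hom}(\mathbb{C}^n_x,\mathbb{C}^n_y):\mathrm{rk}(M)\le k\}$, making the statement an instance of flop invariance of equivariant Hirzebruch genera for small resolutions.

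The main technical obstacle is the careful bookkeeping required to verify the decay at infinity of the Jeffrey-Kirwan integrand: one must correctly account for the $+1$ weight that $\mathbb{C}^\times_q$ assigns to the $\mathrm{Hom}(\mathbb{C}^n_x,K)$ factor of the prequotient, and for the $-\mathscr{K}\otimes\mathscr{K}^*$ correction in \eqref{TX} that shifts the naive degree count. Once this dimension count is settled, the identity follows without further input, and the proof makes no serious use of the Nakajima quiver variety structure beyond the identification of fixed points and tangent weights.
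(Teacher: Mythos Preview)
Your approach via Jeffrey--Kirwan residues is a genuinely different route from the paper's own proof. The paper proceeds by induction on $n$, analyzing the poles of $F_{k,n}:=\chi(X(k,n))$ in the \emph{framing} variables $y_i, x_\ell$ rather than the Cartan variables $z_j$: the only poles occur at $y_i = qx_\ell$, and by localizing to the fixed locus of a rank-one subtorus one shows that the residue there is a universal prefactor times $F_{k-1,n-1}$, with the identical prefactor appearing for $F^\vee_{k-1,n-1}$. The inductive hypothesis then makes $F_{k,n} - F^\vee_{k,n}$ regular on the full torus, hence constant, and a single limit shows it vanishes. The paper in fact discusses your contour-integral strategy explicitly in Appendix~\ref{contourint} and remarks that for $k>1$ the iterated residues become combinatorially involved, which is precisely why the inductive argument was preferred.

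There is, however, a concrete error in your proposal. The claim that the integrand ``extends regularly to infinity and the residue there vanishes'' is false. Balanced numerator/denominator degree in each $z_j$ means that the scalar part of the integrand approaches a \emph{nonzero} constant as $z_j\to\infty$ (and likewise as $z_j\to 0$); together with the Haar factor $dz_j/z_j$ the form therefore has a simple pole at both $0$ and $\infty$ with nonvanishing residue. What actually makes the argument work is that the residues at $0$ and at $\infty$ are \emph{equal}---this is the Calabi--Yau symmetry coming from $\dim\mathbb{C}^n_x=\dim\mathbb{C}^n_y$---and it is their cancellation, not their individual vanishing, that yields flop invariance. For $k=1$ this is immediate (both residues equal $t^n/(1-t)$, as computed in Appendix~\ref{contourint}); for $k>1$ one must track iterated residues through the off-diagonal factors $\prod_{i\neq j}(1-z_i/z_j)/(1-tz_i/z_j)$, and your proposal does not carry this out. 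Your alternative observation---that $X$ and $X^\vee$ are the two small resolutions of the rank $\le k$ determinantal locus in $\mathrm{Hom}(\mathbb{C}^n_x,\mathbb{C}^n_y)$, so that the statement follows from general flop invariance of equivariant Hirzebruch genera---is correct and is acknowledged in the paper with references, but invoking it is not a self-contained argument.
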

While neither $X$ nor $X^\vee$ are proper, the Hirzebruch genera are well-defined as rational functions on $\text{Spec} \, K_{\mathbb{C}^\times_q \times \text{Aut}(\mathscr{V}_n) \times \text{Aut}(\mathscr{W}_n)}(\text{pt})$. In fact, Theorem \ref{flopinv} is equivalent to the following identity of rational functions:
\begin{equation} \label{explicitflopinv}
\sum_{\substack{I \subset \{ 1, \dots, n\} \\ |I| = k}} \prod_{\substack{i \in I \\ j \notin I}} \frac{1 - t y_j/y_i}{1 - y_j/y_i} \prod_{\substack{i \in I \\ \ell = 1, \dots, n}} \frac{1 - t q^{-1} y_i/x_\ell}{1 - q^{-1}y_i/x_\ell} = \sum_{\substack{I \subset \{ 1, \dots, n\} \\ |I| = k}} \prod_{\substack{i \in I \\ j \notin I}} \frac{1 - t x_i/x_j}{1 - x_i/x_j} \prod_{\substack{\ell \in I \\ i = 1, \dots, n}} \frac{1 - t q^{-1} y_i/x_\ell}{1 - q^{-1}y_i/x_\ell}.
\end{equation}

To place this explicit result in a wider context, invariance of elliptic genera (of which the $\chi$-genera we study here are certain degeneration limits) under flops has been studied in great generality, see for example \cite{borisov2000}, \cite{wang2002} or \cite{liu2024} for a more recent reference, thus in principle Theorem \ref{flopinv} follows on rather abstract grounds. We have decided to give a self-contained proof of it here for the convenience of readers, and because in Section \ref{satake} we will use specific features of the geometry of $\text{Gr}_G$ to achieve much greater control over wall-crossing formulas than is available in general.

Readers may also wish to consult Appendix \ref{contourint} for a discussion on the relation of wall-crossing formulas of this type to contour integrals. That such a relation exists should be considered well-known because of the ability to write integral formulas for equivariant localization on GIT quotients, see \cite{Moore_2000} and the appendix of \cite{afo}. 

\begin{proof}
Denote by $F_{k, n}(q, x, y)$ the genus of $X(k, n)$. Because the Grassmannian $\text{Gr}(k, n)$ is proper, $F_{k, n}$ has poles only along the divisors $y_i = qx_\ell$ corresponding to the codimension one subtori in $T := \mathbb{C}^\times_q \times T_x \times T_y$ with nonproper fixed loci in $X(k, n)$. 

To analyze the residue, consider the subtorus $S_{i \ell} \subset T$ defined by setting $y_j = 1$ for $j \neq i$ and $x_m = q^{-1}$ for $m \neq \ell$. It is elementary to see, using the quotient description, that the fixed locus of $S_{i \ell}$ consists of two connected components
\begin{equation}
X(k, n)^{S_{i \ell}} = X(k, n - 1) \bigsqcup X(k - 1, n - 1)
\end{equation}
and only the second $\text{Fix}_{i \ell} := X(k - 1, n - 1)$ has a normal weight which approaches zero as $y_i \to qx_\ell$. Then by $S_{i \ell}$-equivariant localization, 
\begin{equation}
F_{k, n}(q, x, y) = \chi\Bigg( \text{Fix}_{i\ell}, \sum_i \frac{(-t)^i \Omega^i_X \eval_{\text{Fix}_{i \ell}}}{\Lambda^\bullet(N^*_{X/\text{Fix}_{i \ell}})} \Bigg) + (\text{regular as $y_i \to qx_\ell$}). 
\end{equation}
It is understood that the Euler characteristic on $\text{Fix}_{i \ell}$ is taken $T/S_{i\ell}$-equivariantly. From the quotient description, we also see that the tautological bundle restricts as
\begin{equation}
    \mathscr{K} \eval_{\text{Fix}_{i \ell}} = y_i^{-1} \mathbb{C} + \mathscr{K}_{\text{Fix}_{i \ell}}. 
\end{equation}
Then from \eqref{TX} we conclude that 
\begin{equation}
    N_{X/\text{Fix}_{i \ell}} = qx_\ell/y_i + y_i \mathbb{C}^{n - 1}_y + qy_i^{-1} (\mathbb{C}^{n - 1}_x)^* + (qx_\ell - y_i) \mathscr{K}_{\text{Fix}_{i \ell}}.
 \end{equation}
All but the last term are trivial bundles on $\text{Fix}_{i \ell}$, thus contribute overall prefactors to the $\text{Fix}_{i \ell}$ contribution in the localization formula. The last term vanishes as $y_i \to qx_\ell$, thus does not contribute to the residue, and we conclude the asymptotics 
\begin{equation}
    F_{k, n} \sim \Big( \frac{1 - t}{1 - y_i/q x_\ell} \Big) \prod_{j (\neq i)} \frac{1 - t y_j/y_i}{1 - y_j/y_i} \prod_{m (\neq \ell)} \frac{1 - tx_\ell/x_m}{1 - x_\ell/x_m} \chi\Big( \text{Fix}_{i \ell}, \sum_i (-t)^i \Omega^i_{\text{Fix}_{i \ell}} \Big) = (\text{prefactors}) \times F_{k - 1, n - 1}
\end{equation}
as $y_i \to q x_\ell$. An almost identical calculation on the flop gives, denoting $F^\vee_{k, n}$ as the genus of $X^\vee(k, n)$, 
\begin{equation}
F^\vee_{k, n} \sim (\text{same prefactors}) \times F^\vee_{k - 1, n - 1}
\end{equation}
as $y_i \to qx_\ell$. Now we proceed by induction on $n$ taking the statement of the theorem as the inductive hypothesis, which is evidently true when $n = 1$. For the induction step, we use the analysis of the residues to conclude that $F_{k, n} - F^\vee_{k, n}$ has no pole as $y_i \to q x_\ell$ for each $i$ and $\ell$, and therefore is a regular function on $T$. Moreover by the explicit localization formulas given by the left and right hand sides of \eqref{explicitflopinv}, this function is bounded at all infinities of $T$, therefore is a constant on $T$ and depends on the variable $t$ only. 

Then we may evaluate the difference on any point of any toric compactification of $T$; let us choose a point on a divisor at infinity corresponding to the chamber $|y_1| \ll \dots \ll |y_n| \ll |x_1| \ll \dots \ll |x_n|$. In this limit we obviously have 
\begin{equation}
    F_{k, n} \to F^\vee_{k, n} \to \text{Poincar\'{e} polynomial of $\text{Gr}(k, n)$}
\end{equation}
therefore $F_{k, n} - F^\vee_{k, n} = 0$ identically. 
\end{proof}

\subsubsection{} \label{diffopsflop}
Theorem \ref{flopinv} admits the following more explicit reformulation. Introduce the difference operators 
\begin{equation}
\begin{split}
    \widehat{U}^{(y)}_{\Lambda^k(\mathbb{C}^n)} & = \sum_{\substack{I \subset \{ 1, \dots, n \} \\ |I| = k}} \prod_{i \in I} q^{D^{(y)}_i} \prod_{\substack{i \in I \\ j \notin I}} \frac{1 - ty_j/y_i}{1 - y_j/y_i} \\
    \widehat{U}^{(x)}_{\Lambda^k(\mathbb{C}^n)^*} & = \sum_{\substack{I \subset \{ 1, \dots, n \} \\ |I| = k}} \prod_{\substack{i \in I \\ j \notin I}} \frac{1 - tx_i/x_j}{1 - x_i/x_j} \prod_{i \in I} q^{D^{(x)}_i}
\end{split}
\end{equation}
where $q^{D^{(y)}_i} \cdot (x_\ell, y_j) = (x_\ell, q^{\delta_{ij}} y_j)$ and vice versa for $D^{(x)}_i$. Then we have 
\begin{equation}
    \chi(\mathsf{Maps}(\mathbb{C}_q \to \text{Hom}(\mathbb{C}^n_x, \mathbb{C}^n_y)) \cdot \widehat{U}^{(y)}_{\Lambda^k(\mathbb{C}^n)} = \widehat{U}^{(x)}_{\Lambda^k(\mathbb{C}^n)^*} \cdot \chi(\mathsf{Maps}(\mathbb{C}_q \to \text{Hom}(\mathbb{C}^n_x, \mathbb{C}^n_y))
\end{equation}
which is a rather nontrivial difference equation satisfied by 
\begin{equation}
    \chi(\mathsf{Maps}(\mathbb{C}_q \to \text{Hom}(\mathbb{C}^n_x, \mathbb{C}^n_y)) = \prod_{i = 1}^n \prod_{\ell = 1}^n \frac{\varphi_q(ty_i/x_\ell)}{\varphi_q(y_i/x_\ell)}. 
\end{equation}

\subsection{Asymptotics of localization formulas and geometric Satake equivalence} \label{satake}
The analysis of Section \ref{diffeqflop} deals with the situation where the vector bundles $\mathscr{V}_m$ and $\mathscr{W}_n$ on $C$ have the same rank, in other words $m = n$. In this section we study the case $m \neq n$, and deduce the corresponding difference equation. It is in fact a corollary of Theorem \ref{flopinv}, and the following elementary statement about asymptotics in the localization formula for the genus $\chi(X)$. 

\subsubsection{}
Let a torus $A$ act on a smooth variety $X$ and consider the equivariant Hirzebruch genus $\chi(X)$. This is a rational function of $a \in A$, and we are interested in its limit as $a \to 0$, where it is understood that this limit is taken in some chamber if $\text{rank} \, A > 1$. Let $F \in X^A$ denote a connected component of the fixed locus, then we have 

\begin{prop} \label{asymptotics}
\begin{equation}
    \lim_{a \to 0} \chi(X) = \sum_F t^{\text{rk} \, (N^{> 0}_{X/F})} \chi(F)
\end{equation}
where $N^{> 0}_{X/F}$ denotes the attracting part of the normal bundle with respect to the chamber $a \to 0$, and the sum runs over connected components of $X^A$. 
\end{prop}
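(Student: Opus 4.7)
The plan is to apply equivariant $K$-theoretic localization with respect to $A$ and then compute the limit $a \to 0$ termwise on each fixed component. By localization,
\begin{equation}
\chi(X) \;=\; \sum_F \chi\!\left(F,\; \frac{\sum_i (-t)^i \Omega_X^i \big|_F}{\Lambda^\bullet(N^*_{X/F})}\right),
\end{equation}
where the sum runs over the connected components $F$ of $X^A$. Since $T^*X|_F \cong T^*F \oplus N^*_{X/F}$, the Hirzebruch class restricts as $\sum_i (-t)^i \Omega_X^i|_F = \Lambda^\bullet(-tT^*_F) \otimes \Lambda^\bullet(-tN^*_{X/F})$ in the standard notation $\Lambda^\bullet(-tV) := \sum_i (-t)^i \Lambda^i V$. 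The integrand at $F$ therefore factors as an $a$-independent tangential piece $\Lambda^\bullet(-tT^*_F)$ times a purely normal piece $\Lambda^\bullet(-tN^*_{X/F})/\Lambda^\bullet(N^*_{X/F})$, which is where all of the $a$-dependence sits.

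The key step is then to analyze the normal factor weight by weight. Decompose $N_{X/F} = N^{>0}_{X/F} \oplus N^{<0}_{X/F}$ according to the chamber, with $N^{>0}_{X/F}$ consisting of weight spaces pairing positively with the chosen cocharacter. On a single weight space with character $a^w$, the normal contribution is
\begin{equation}
\frac{1 - ta^{-w}}{1 - a^{-w}} \;=\; \frac{a^w - t}{a^w - 1}.
\end{equation}
As $a \to 0$ along the chamber, $a^w \to 0$ when $w > 0$ and $a^w \to \infty$ when $w < 0$, yielding respective limits $t$ and $1$. Multiplying over all weights of $N_{X/F}$ produces the overall scalar $t^{\mathrm{rk}(N^{>0}_{X/F})}$. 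The tangential factor is then just $\Lambda^\bullet(-tT^*_F)$, whose $F$-pushforward is by definition $\chi(F)$ in the notation of Section \ref{diffeqflop}. Summing over components delivers the claimed formula.

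The step I expect to require the most care is the commutation of the limit $a \to 0$ with the sum over fixed components. Because $X$ is not assumed proper, the individual localization contributions are genuine rational functions of $a$, and one must verify that no poles along the chamber direction appear after summing. In the applications below this is harmless: the $\chi(X)$ we care about are rational functions whose poles lie on hyperplanes transverse to the chosen chamber (they arise from codimension-one subtori with non-proper fixed locus, as in the proof of Theorem \ref{flopinv}), so the termwise limit above genuinely computes $\lim_{a \to 0} \chi(X)$.
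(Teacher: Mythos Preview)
Your proof is correct and follows essentially the same approach as the paper's own proof: apply $A$-equivariant localization and analyze the normal contribution $\prod_w (1 - tw^{-1})/(1 - w^{-1})$ weight by weight, noting that attracting weights limit to $t$ and repelling weights limit to $1$. Your treatment is in fact slightly more careful than the paper's, since you spell out the tangential/normal factorization of the Hirzebruch integrand and add a remark on the interchange of the limit with the sum over components, which the paper leaves implicit.
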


\begin{proof}
By $A$-equivariant localization, the contribution of the normal bundle to $F$ to $\chi(X)$ is of the form 
\begin{equation}
\prod_{w} \frac{1 - tw^{-1}}{1 - w^{-1}}
\end{equation}
where $w$ runs over the equivariant Chern roots of $N_{X/F}$. As $a \to 0$, each factor approaches $1$ if $w$ is repelling and $t$ if $w$ is attracting.
\end{proof}

We remark that the proposition remains true if $A \subset T$ for some larger torus $T$ acting on $X$, if it is understood that $\chi(F)$ is taken $T/A$-equivariantly. If $X$ is proper, $\chi(X)$ does not depend on equivariant variables at all, and the above proposition reduces to the computation of the Poincar\'{e} polynomial by Morse theory. 

\subsubsection{} \label{chiGr}
One of the most basic illustrations of the principle discussed in the previous section is the computation of $\chi(\text{Gr}(k, n))$, which we briefly recall as it will be a useful intermediate step in the proof of Theorem \ref{Xknmflop}. 

For the obvious action of a maximal torus $T \subset GL_n$ on $\text{Gr}(k, n)$, the fixed locus consists of coordinate $k$-planes indexed by subsets $I \subset \{1, \dots, n\}$ of size $k$, with normal weights $y_i/y_j$ for each $i \in I$ and $j \notin I$. In the chamber $|y_1| \ll |y_2| \ll \dots \ll |y_n|$, for a fixed point $I = \{ i_1, \dots, i_k \}$, $i_1 < \dots < i_k$, the number of attracting directions is 
\begin{equation}
    k(n - k) - \sum_{j = 1}^k(i_j - j) = \frac{k(n - k)}{2} + \frac{k(n + 1)}{2} - \sum_{j = 1}^k i_j.
\end{equation}
An insightful way to organize the computation is to observe that, subtracting $\frac{1}{2} \dim \text{Gr}(k, n)$ from the above, what remains is the eigenvalue of the diagonal matrix 
\begin{equation}
    \text{diag}\Big( \frac{n - 1}{2}, \dots, \frac{1 - n}{2} \Big) 
\end{equation}
acting on the vector $e_{i_1} \wedge \dots \wedge e_{i_k} \in \Lambda^{k}(\mathbb{C}^n)$, where $e_i$, $i = 1, \dots, n$ denotes the standard basis of $\mathbb{C}^n$. Call this matrix $\rho_n$, as it is the image of the Weyl vector of $\mathfrak{sl}_2$ under the principal embedding $\mathfrak{sl}_2 \to \mathfrak{gl}_n$. We conclude from Proposition \ref{asymptotics} and properness of $\text{Gr}(k, n)$
\begin{equation}
    \chi(\text{Gr}(k, n)) = \lim_{y \to 0} \chi(\text{Gr}(k, n)) = t^{\frac{1}{2} \dim \text{Gr}(k, n)} \chi_{\Lambda^{k}(\mathbb{C}^n)}(t^{\rho_n}) 
\end{equation}
which is well-known. 

\subsubsection{}
Another, much more abstract, perspective on $\chi(\text{Gr}(k, n))$ is the following. One of the central structural results about the affine Grassmannian $\text{Gr}_G$ introduced in Section \ref{affinegr} is the geometric Satake equivalence \cite{ginzburg2000}, \cite{mirkovicvilonen} which asserts an equivalence of tensor categories 
\begin{equation}
    \text{Perv}_{G(\mathscr{O})}(\text{Gr}_G) \simeq \text{Rep}(G^\vee).
\end{equation}
On the left is the category of $G(\mathscr{O})$-equivariant perverse sheaves on the affine Grassmannian, with tensor structure defined by convolution. On the right is the category of finite-dimensional representations of the Langlands dual group, with the tensor structure given by the tensor product. 

To a dominant weight $\mu^\vee$ of $G^\vee$, there is an associated dominant coweight $\mu$ of $G$ and orbit closure $\overline{\text{Gr}}^\mu_G \subset \text{Gr}_G$. The geometric Satake equivalence maps the intersection cohomology complex $IC_\mu$ of the orbit closure to the irreducible representation of $G^\vee$ with highest weight $\mu^\vee$. 

If $\mu$ is minuscule, $\overline{\text{Gr}}^\mu_G = \text{Gr}^\mu_G$ is smooth and proper, and the intersection cohomology reduces to the ordinary de Rham cohomology of the orbit. For $G = GL_n$, the minuscule orbits are all of the form $\text{Gr}(k, n)$ for some $k$ and $n$. It is known that the grading of $H^*(\text{Gr}(k, n))$ by cohomological degree matches, up to a shift by $\frac{1}{2} \dim \text{Gr}(k, n)$, the grading of the corresponding representation $\Lambda^k(\mathbb{C}^n)$ of $G^\vee \simeq GL_n$ under the maximal torus of a principal $SL_2$ subgroup. This fact together with Hodge theory imply the result of the calculation of Section \ref{chiGr} on the level of characters of the principal $SL_2$. 

\subsubsection{}
Given a Levi subgroup $M \subset G$, there is a dual Levi $M^\vee \subset \, G^\vee$ and a natural restriction functor 
\begin{equation}
    \text{Rep}(G^\vee) \to \text{Rep}(M^\vee).
\end{equation}
An interesting question is that of the compatability of the geometric Satake equivalence with such restriction functors. The answer is known (\cite{beilinsondrinfeld}, Proposition 5.3.29) to be the following. Choose a parabolic $P$, which has maps $P \xhookrightarrow{} G$ and $P \to M$ given respectively by inclusion and quotient by unipotent radical. By covariance of the affine Grassmannian in $G$, there are associated maps 
\[\begin{tikzcd}
	& {\text{Gr}_P} \\
	{\text{Gr}_M} && {\text{Gr}_G}
	\arrow[from=1-2, to=2-1]
	\arrow[from=1-2, to=2-3]
\end{tikzcd}\]
and the restriction functor is dual to a certain pull-push in this diagram. 

\subsubsection{}
For our concrete applications, we just need to know what this diagram looks like for the minuscule orbits of $G = GL_n$, each of which is isomorphic to some Grassmannian $\text{Gr}(k, n)$. Levi subgroups $M \subset GL_n$ up to conjugacy are in one-to-one correspondence with partitions $n = n_1 + \dots + n_r$ of $n$. If a partition has $r$ parts, there is an associated rank $r$ torus $S \subset GL_n$, such that 
\begin{equation}
    \text{Gr}(k, n)^S = \bigsqcup_{k = k_1 + \dots + k_r} \prod_{i = 1}^r \text{Gr}(k_i, n_i). 
\end{equation}
The connected components of the fixed locus are precisely minuscule orbits in $\text{Gr}_M$. A choice of parabolic $P$ quotienting to $M$ is the same as a choice of chamber $\mathfrak{C}$ for the action of $S$, and the correspondence $\text{Gr}_P$ restricts to the correspondence 
\begin{equation}
    \text{Attr}_{\mathfrak{C}} \subset \text{Gr}(k, n)^S \times \text{Gr}(k, n)
\end{equation}
defined by attracting manifolds (in fact, $\text{Gr}_P$ itself may be viewed as an attracting manifold for a similarly defined torus action on $\text{Gr}_G$). Fixed loci of this form (for $r = 2$) and their attracting manifolds will play a key role in Theorem \ref{Xknmflop} below. 

\subsubsection{}
Let us return to our concrete situation, with vector bundles $\mathscr{V}_m$ and $\mathscr{W}_n$ on our curve $C \simeq \mathbb{P}^1$. We study Hecke modifications of $\mathscr{W}_n$ and $\mathscr{V}_m$ at $0 \in C$, and the behavior of sections under them. Without loss of generality we take $m < n$.

The natural analog of the space $X(k, n)$ defined in Section \ref{Xkn} in this scenario is 
\begin{equation}
    X(k, n, m) = \{ (I, J) \in \text{Hom}(\mathbb{C}^m_x, K) \oplus \text{Hom}^{\text{st}}(K, \mathbb{C}^n_y) \}/GL(K).
\end{equation}
We define $X^\vee(k, n, m)$ by placing the stability condition on $I$ instead of $J$. When $m \neq n$, $X$ and $X^\vee$ are no longer isomorphic. 

Fix a splitting $\mathbb{C}^n_x = \mathbb{C}^m_x \oplus \mathbb{C}^{n - m}_x$, and let a torus $\mathbb{C}^\times_\lambda$ act trivially on the first summand and scale the second summand with weight $-1$. There is an induced action of $\mathbb{C}^\times_\lambda$ of $X(k, n)$. It may be viewed as a subtorus of $T$, being the image of a cocharacter $\mathbb{C}^\times_\lambda \to T$ defined by the above splitting of $\mathbb{C}^n_x$. 

\begin{prop} \label{Xasymp}
The fixed locus 
\begin{equation}
    X(k, n)^{\mathbb{C}^\times_\lambda} = X(k, n, m)
\end{equation}
and the normal bundle is totally repelling in the chamber $\lambda \to \infty$.
\end{prop}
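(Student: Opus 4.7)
The plan is to carry out a direct analysis at the level of the GIT presentation, making crucial use of the stability condition that defines $X(k,n)$.

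First I would unpack the $\mathbb{C}^\times_\lambda$-action on the prequotient $\text{Hom}(\mathbb{C}^n_x, K) \oplus \text{Hom}^{\text{st}}(K, \mathbb{C}^n_y)$ induced from the splitting $\mathbb{C}^n_x = \mathbb{C}^m_x \oplus \mathbb{C}^{n-m}_x$. Decomposing $I = (I_1, I_2)$ accordingly, $\lambda$ acts by $(I_1, I_2, J) \mapsto (I_1, \lambda I_2, J)$, so a point is fixed in the quotient iff there exists a cocharacter $g(\lambda) : \mathbb{C}^\times_\lambda \to GL(K)$ with $g(\lambda) I_1 = I_1$, $g(\lambda) I_2 = \lambda I_2$, and $J g(\lambda)^{-1} = J$. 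Writing $K = \bigoplus_i K_i$ for the weight decomposition of $g(\lambda)$, the condition on $J$ forces $J|_{K_i} = 0$ for $i \neq 0$; combined with the stability condition that $J$ be injective (i.e.\ of maximal rank), this collapses $K = K_0$, hence $g(\lambda) = \mathrm{id}_K$, and consequently $I_2 = 0$.

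The locus $\{I_2 = 0\}$ inside the prequotient is exactly $\text{Hom}(\mathbb{C}^m_x, K) \oplus \text{Hom}^{\text{st}}(K, \mathbb{C}^n_y)$, and the $GL(K)$-action is unchanged, so the GIT quotient of the fixed locus is precisely $X(k,n,m)$. This establishes the first assertion. For the second, I would read off the normal bundle from the tangent formula \eqref{TX}. At a fixed point, $\lambda$ acts trivially on $\mathbb{C}^n_y \otimes \mathscr{K}^*$ and on $\mathscr{K} \otimes \mathscr{K}^*$, while $\mathscr{K} \otimes (\mathbb{C}^n_x)^*$ splits as $\mathscr{K} \otimes (\mathbb{C}^m_x)^* \oplus \mathscr{K} \otimes (\mathbb{C}^{n-m}_x)^*$, with the second summand carrying $\lambda$-weight $+1$ and the first summand being fixed. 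Subtracting the fixed-locus tangent bundle $T X(k,n,m)$ leaves the normal bundle $N_{X(k,n)/X(k,n,m)} = q \mathscr{K} \otimes (\mathbb{C}^{n-m}_x)^*$, all of whose equivariant Chern roots have $\lambda$-weight $+1$.

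Since a weight $+1$ direction is blown up as $\lambda \to \infty$, the normal bundle is totally repelling in this chamber. The only delicate point is the use of the stability condition to eliminate nontrivial cocharacters $g(\lambda)$ in step one; once that is in hand, the remainder is a bookkeeping exercise with the tangent bundle formula from Section \ref{Xkn}.
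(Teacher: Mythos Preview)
Your proof is correct and follows essentially the same approach as the paper: read off the normal bundle from the tangent formula \eqref{TX} and observe that the sole $\lambda$-moving summand $q\,\mathscr{K}\otimes(\mathbb{C}^{n-m}_x)^*$ (written as $\lambda\,\mathscr{K}\otimes(\mathbb{C}^{n-m}_x)^*$ in the paper) has positive $\lambda$-weight and is hence repelling as $\lambda\to\infty$. Your argument is in fact more complete than the paper's terse proof, since you explicitly verify the fixed-locus identification $X(k,n)^{\mathbb{C}^\times_\lambda}=X(k,n,m)$ by using injectivity of $J$ to rule out nontrivial compensating cocharacters in $GL(K)$---a step the paper leaves implicit.
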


\begin{proof}
The normal bundle is, e.g. from \eqref{TX}
\begin{equation}
    N_{X/X^{\mathbb{C}^\times_\lambda}} = \lambda \mathscr{K} \otimes (\mathbb{C}^{n - m}_x)^*
\end{equation}
where $\mathscr{K}$ is the tautological bundle. Due to the overall factor of $\lambda$, it is totally repelling as $\lambda \to \infty$. 
\end{proof}

\subsubsection{}
As the action of $\mathbb{C}^\times_\lambda$ is induced by the splitting $\mathbb{C}^n_x = \mathbb{C}^m_x \oplus \mathbb{C}^{n - m}_x$, $\mathbb{C}^\times_\lambda$ also acts naturally on $X^\vee(k, n)$. We have the following result on the fixed locus and attracting directions.

\begin{prop} \label{Xflopasymp}
    The fixed locus 
    \begin{equation} \label{Xflopfixed}
        X^\vee(k, n)^{\mathbb{C}^\times_\lambda} = \bigsqcup_{k_1 + k_2 = k} X^\vee(k_1, n, m) \times \text{Gr}(k_2, n - m)
    \end{equation}
    where it is understood that we omit any component in which either of the two factors is empty, and we have 
    \begin{equation}
        \lim_{\lambda \to \infty} \chi(X^\vee(k, n)) = \sum_{k_1 + k_2 = k} t^{k_2(m - k_1) + \frac{1}{2}k_2(n - m - k_2)} \chi(X^\vee(k_1, n, m)) \chi_{\Lambda^{k_2}(\mathbb{C}^{n - m})}(t^{\rho_{n - m}}). 
    \end{equation}
\end{prop}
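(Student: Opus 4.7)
The plan is to identify the $\mathbb{C}^\times_\lambda$-fixed locus directly from the GIT description of $X^\vee(k,n)$, then apply Proposition \ref{asymptotics} together with the calculation of $\chi(\text{Gr}(k_2, n-m))$ from Section \ref{chiGr}.

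For the fixed locus, I would work on the prequotient $\text{Hom}^{\text{st}}(\mathbb{C}^n_x, K) \oplus \text{Hom}(K, \mathbb{C}^n_y)$. A point $[(I_1, I_2), J]$ is $\mathbb{C}^\times_\lambda$-fixed iff there exists a cocharacter $h: \mathbb{C}^\times_\lambda \to GL(K)$ with $h(\lambda) I = I \circ \lambda_x^{-1}$ and $J h(\lambda) = J$, where $\lambda_x$ denotes the action on $\mathbb{C}^n_x = \mathbb{C}^m_x \oplus \mathbb{C}^{n-m}_x$ (trivial on the first summand, weight $-1$ on the second). Decomposing $K = \bigoplus_w K_w$ into $h$-weight subspaces, the first relation forces $I_1(\mathbb{C}^m_x) \subseteq K_0$ and $I_2(\mathbb{C}^{n-m}_x) \subseteq K_1$, while the second forces $J|_{K_w} = 0$ for $w \neq 0$. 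Surjectivity of $I$ then gives $K = K_0 \oplus K_1$; the residual data $(I_1, J|_{K_0})$ modulo $GL(K_0)$ is a point of $X^\vee(k_1, n, m)$ with $k_1 = \dim K_0 \leq m$, and $I_2$ modulo $GL(K_1)$ is a point of $\text{Gr}(k_2, n-m)$ with $k_2 = \dim K_1 \leq n-m$. This establishes \eqref{Xflopfixed}.

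For the normal bundle analysis at $p \in F$, I would compute the $\mathbb{C}^\times_\lambda$-weight decomposition of $T_p X^\vee(k,n)$ directly from the prequotient model. The $\lambda$-action on $T_{[p]} X^\vee(k,n)$ is obtained by combining the pushforward $\lambda_*$ on the prequotient tangent with the compensating inverse gauge, giving $(\delta I, \delta J) \mapsto (h^{-1} \delta I \lambda_x^{-1}, \delta J h)$. A direct term-by-term analysis on the prequotient modulo $\mathfrak{gl}(K)$ shows that the weight-zero part reproduces $TF = TX^\vee(k_1, n, m) \oplus T\text{Gr}(k_2, n-m)$, the weight-$(+1)$ part is $\text{Hom}(\mathbb{C}^{n-m}_x, K_0) \oplus \text{Hom}(K_1, \mathbb{C}^n_y) - \text{Hom}(K_0, K_1)$, and the weight-$(-1)$ part is $\text{Hom}(\mathbb{C}^m_x, K_1) - \text{Hom}(K_1, K_0)$, of rank $mk_2 - k_1 k_2 = k_2(m - k_1)$; no other weights occur. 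In the chamber $\lambda \to \infty$ (corresponding to $a = \lambda^{-1} \to 0$ in Proposition \ref{asymptotics}), the attracting part of the normal bundle is the negative $\lambda$-weight piece, of rank $k_2(m - k_1)$.

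Putting everything together, Proposition \ref{asymptotics} and multiplicativity of $\chi$ over products yield
\begin{equation*}
\lim_{\lambda \to \infty} \chi(X^\vee(k, n)) = \sum_{k_1 + k_2 = k} t^{k_2(m - k_1)} \chi(X^\vee(k_1, n, m)) \chi(\text{Gr}(k_2, n-m)),
\end{equation*}
and substituting $\chi(\text{Gr}(k_2, n-m)) = t^{\frac{1}{2} k_2(n-m-k_2)} \chi_{\Lambda^{k_2}(\mathbb{C}^{n-m})}(t^{\rho_{n-m}})$ from Section \ref{chiGr} produces the claimed formula. The main point to be careful about is tracking the combined $(\lambda, h)$-action on the tangent space together with the chamber convention, since these jointly determine which weights count as attracting; once that is sorted out, the remainder is routine bookkeeping with the GIT presentation and the $K$-theoretic weight decomposition.
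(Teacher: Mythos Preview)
Your proposal is correct and follows essentially the same approach as the paper: the paper phrases the normal bundle computation via the $K$-class \eqref{TX} together with the restriction $\mathscr{K}\,|_{\text{Fix}_{k_1,k_2}} = \mathscr{K}_1 + \lambda^{-1}\mathscr{K}_2$, while you work directly at the prequotient level modulo $\mathfrak{gl}(K)$, but these are equivalent computations leading to the same attracting rank $k_2(m-k_1)$ and the same application of Proposition~\ref{asymptotics} and Section~\ref{chiGr}. One small slip: with your stated action $(\delta I,\delta J)\mapsto(h^{-1}\delta I\lambda_x^{-1},\delta J h)$, the compatible action on $\mathfrak{gl}(K)$ is $\xi\mapsto h^{-1}\xi h$, under which $\text{Hom}(K_0,K_1)$ has $\lambda$-weight $-1$ and $\text{Hom}(K_1,K_0)$ has weight $+1$, so these two pieces are swapped in your weight-$(\pm 1)$ lists; since both have rank $k_1k_2$ this is harmless for the conclusion.
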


\begin{proof}
That the fixed locus takes the form \eqref{Xflopfixed} follows directly from an analysis using the quiver description. The factor $\text{Gr}(k_2, n - m)$ should be understood as $X^\vee(k_2, 0, n - m)$. Denote $\text{Fix}_{k_1, k_2} := X^\vee(k_1, n, m) \times \text{Gr}(k_2, n - m)$. We have for the restriction of the tautological bundle
\begin{equation}
    \mathscr{K} \eval_{\text{Fix}_{k_1, k_2}} = \mathscr{K}_1 + \lambda^{-1} \mathscr{K}_2
\end{equation}
where $\mathscr{K}_{1, 2}$ denote the pullbacks of the respective tautological bundles from either factor of $\text{Fix}_{k_1, k_2}$. From \eqref{TX}, it follows that 
\begin{equation}
    N_{X^\vee/ \text{Fix}_{k_1, k_2}} = \lambda^{-1}(\mathscr{K}_2 \otimes (\mathbb{C}^m_x)^* - \mathscr{K}_2 \otimes \mathscr{K}_1^*) + \lambda(\mathbb{C}^n_y \otimes \mathscr{K}_2^* + \mathscr{K}_1 \otimes (\mathbb{C}^{n - m}_x)^* - \mathscr{K}_1 \otimes \mathscr{K}_2^*) 
\end{equation}
thus $\text{rk} \, N^{> 0}_{X/\text{Fix}_{k_1, k_2}} = k_2(m - k_1)$. From Proposition \ref{asymptotics} we conclude 
\begin{equation}
    \lim_{\lambda \to \infty} \chi(X^\vee(k, n)) = \sum_{k_1 + k_2 = k} t^{k_2(m - k_1)} \chi(\text{Fix}_{k_1, k_2}).
\end{equation}
By multiplicativity of the $\chi$-genus and the results of Section \ref{chiGr}, we have 
\begin{equation}
    \chi(\text{Fix}_{k_1, k_2}) = \chi(X^\vee(k_1, n, m)) \times \chi(\text{Gr}(k_2, n - m)) = \chi(X^\vee(k_1, n, m)) \times t^{\frac{1}{2}k_2(n - m - k_2)} \chi_{\Lambda^{k_2}(\mathbb{C}^{n - m})}(t^{\rho_{n - m}})
\end{equation}
whence the conclusion. 
\end{proof}

\subsubsection{}
We are now in a position to state 

\begin{theorem} \label{Xknmflop}
    We have the wall-crossing formula 
    \begin{equation}
        \chi(X(k, n, m)) = \sum_{k_1 + k_2 = k} t^{k_2(m - k_1) + \frac{1}{2}k_2(n - m - k_2)} \chi_{\Lambda^{k_2}(\mathbb{C}^{n - m})}(t^{\rho_{n - m}}) \chi(X^\vee(k_1, n, m)). 
    \end{equation}
\end{theorem}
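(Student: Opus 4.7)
The strategy is to exploit flop invariance (Theorem \ref{flopinv}) together with the two asymptotic computations already established, by evaluating both sides in a common limit in the equivariant torus. This reduces the theorem to essentially three lines.

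First, I would invoke Theorem \ref{flopinv} to get the equality of rational functions
\begin{equation*}
    \chi(X(k,n)) = \chi(X^\vee(k,n))
\end{equation*}
on $\operatorname{Spec} K_{\mathbb{C}^\times_q \times \operatorname{Aut}(\mathscr{V}_n) \times \operatorname{Aut}(\mathscr{W}_n)}(\mathrm{pt})$. Since this identity holds at the level of rational functions in the equivariant parameters, it remains valid after restriction along any one-parameter subgroup, in particular along the cocharacter $\mathbb{C}^\times_\lambda \hookrightarrow T$ coming from the splitting $\mathbb{C}^n_x = \mathbb{C}^m_x \oplus \mathbb{C}^{n-m}_x$, and after taking the limit $\lambda \to \infty$ in this parameter (interpreted as an iterated limit in whatever larger torus is present).

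Next, I would evaluate the two sides of this equality in the limit $\lambda \to \infty$. On the $X(k,n)$ side, Proposition \ref{Xasymp} identifies the fixed locus as $X(k,n,m)$ and says that the entire normal bundle is repelling in the chamber $\lambda \to \infty$. Applying Proposition \ref{asymptotics} (with the roles of attracting/repelling swapped, since we are sending $\lambda$ to infinity rather than to zero — each factor $(1-tw^{-1})/(1-w^{-1})$ for a repelling weight $w$ tends to $1$), every normal-bundle factor contributes trivially and the limit is simply
\begin{equation*}
    \lim_{\lambda \to \infty} \chi(X(k,n)) = \chi(X(k,n,m)).
\end{equation*}
On the $X^\vee(k,n)$ side, Proposition \ref{Xflopasymp} directly gives
\begin{equation*}
    \lim_{\lambda \to \infty} \chi(X^\vee(k,n)) = \sum_{k_1 + k_2 = k} t^{k_2(m - k_1) + \frac{1}{2}k_2(n - m - k_2)}\, \chi_{\Lambda^{k_2}(\mathbb{C}^{n-m})}(t^{\rho_{n-m}})\, \chi(X^\vee(k_1, n, m)).
\end{equation*}
Equating the two limits yields the desired wall-crossing formula.

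The only thing requiring any care is the justification that one can pass to the limit $\lambda \to \infty$ on both sides of the identity from Theorem \ref{flopinv}. Both $\chi(X(k,n))$ and $\chi(X^\vee(k,n))$ are rational functions whose poles in the $\lambda$-direction occur only along hyperplanes where a $\mathbb{C}^\times_\lambda$-subtorus develops non-proper fixed locus; by Propositions \ref{Xasymp} and \ref{Xflopasymp} the $\lambda \to \infty$ limit exists term-by-term in the $\mathbb{C}^\times_\lambda$-equivariant localization expansion, so there is no issue. This is the mildest point in the argument, so I do not anticipate any real obstacle: once Theorem \ref{flopinv} and the two asymptotic propositions are in hand, the theorem is an immediate consequence of comparing them.
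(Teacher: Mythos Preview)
Your proposal is correct and follows essentially the same approach as the paper: take the $\lambda \to \infty$ limit of both sides of Theorem \ref{flopinv} and apply Propositions \ref{Xasymp} and \ref{Xflopasymp} to the left and right hand sides respectively. Your added remark justifying that the limit exists is a welcome elaboration, but the core argument is identical.
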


\begin{proof}
Using action of the torus $\mathbb{C}^\times_\lambda$ on $X(k, n)$ and $X^\vee(k, n)$, apply Proposition \ref{asymptotics} to the left and right hand side of Theorem \ref{flopinv}, then apply propositions \ref{Xasymp} and \ref{Xflopasymp}. 
\end{proof}
Using the difference operators of Section \ref{diffopsflop}, Theorem \ref{Xknmflop} may be reformulated as the difference equation 
\begin{equation}
\begin{split}
    \sum_{k_1 + k_2 = k} t^{k_2(m - k_1) + \frac{1}{2}k_2(n - m - k_2)} \chi_{\Lambda^{k_2}(\mathbb{C}^{n - m})}(t^{\rho_{n - m}}) \widehat{U}^{(x)}_{\Lambda^{k_1}(\mathbb{C}^m)^*} \cdot \chi(\mathsf{Maps}(\mathbb{C}_q \to \text{Hom}(\mathbb{C}^m_x, \mathbb{C}^n_y)) = \\ \chi(\mathsf{Maps}(\mathbb{C}_q \to \text{Hom}(\mathbb{C}^m_x, \mathbb{C}^n_y)) \cdot \widehat{U}^{(y)}_{\Lambda^{k}(\mathbb{C}^n)}
\end{split}
\end{equation}
satisfied by 
\begin{equation}
    \chi(\mathsf{Maps}(\mathbb{C}_q \to \text{Hom}(\mathbb{C}^m_x, \mathbb{C}^n_y)) = \prod_{i = 1}^n \prod_{\ell = 1}^m \frac{\varphi_q(ty_i/x_\ell)}{\varphi_q(y_i/x_\ell)}. 
\end{equation}
It is hopefully evident to the reader that this difference equation reflects the decomposition of the $GL_n$ representation
\begin{equation}
    \Lambda^k(\mathbb{C}^n) = \bigoplus_{k = k_1 + k_2} \Lambda^{k_1}(\mathbb{C}^m) \otimes \Lambda^{k_2}(\mathbb{C}^{n - m})
\end{equation}
under restriction to the Levi $M = GL_m \times GL_{n - m}$, viewed from the other side of the geometric Satake equivalence. 

\section{$K$-theoretic computations: Hecke eigenvalue property} \label{kthdiff}

\subsection{Quivers and quasimaps} \label{quiverquasi}
Quasimaps from a curve $C$ to an algebraic variety $X$ provide a natural way to compactify the naive space of maps $C \to X$, at least when $X$ may be presented as a geometric invariant theory (GIT) quotient. Moduli spaces of stable quasimaps to GIT quotients were constructed in \cite{cfkimmaulik} and have a perfect obstruction theory whenever $X$ is the GIT quotient of an affine variety with at most local complete intersection singularities by the action of a reductive group. We will be interested in studying quasimaps to $X = GL_n/B$. In this section we will recall some basic notions to establish notations and conventions. 

\subsubsection{}
The flag variety $X = GL_n/B$ can be viewed as the moduli space of flags 
\begin{equation}
    \{ 0 \} \subset V_1 \subset V_2 \subset \dots \subset V_{n - 1} \subset W_n
\end{equation}
of subspaces of a fixed $n$-dimensional vector space $W_n$, where $\dim V_i = i$. 

If we view the $V_i$ as abstract vector spaces, such flags are the same data as injective linear maps $s_i : V_i \to V_{i + 1}$ for each $i$, considered up to the natural action of $GL(V_i)$ for each $i$. That is, we have 
\begin{equation} \label{quotientpres}
    X = \Bigg\{ s \in \bigoplus_{i = 1}^{n - 1} \text{Hom}^{\text{st}}(V_i, V_{i + 1}) \Bigg\} \Bigg/ \prod_{i = 1}^{n - 1} GL(V_i).
\end{equation}
In writing this equation, we understand that $V_n = W_n$ and that $\text{Hom}^{\text{st}}(V_i, V_{i + 1}) \subset \text{Hom}(V_i, V_{i + 1})$ denotes the open subset of injective linear maps. This locus is in fact precisely the same as the GIT stable locus for the action of $\prod_{i = 1}^{n - 1} GL(V_i)$ on $\bigoplus_i \text{Hom}(V_i, V_{i + 1})$ for a certain choice of ample linearized line bundle. 

\subsubsection{}
A quasimap $f(z)$ from a smooth rational curve $C \simeq \mathbb{P}^1$ to $X$ consists, by definition, of the following data. We have a collection of vector bundles $\mathscr{V}_i$ on $C$, $i = 1, \dots, n - 1$ and $\text{rk} \mathscr{V}_i = i$, together with a section 
\begin{equation}
    f \in H^0(C, \mathscr{H}om(\mathscr{V}_1, \mathscr{V}_2) \oplus \dots \oplus \mathscr{H}om(\mathscr{V}_{n - 1}, \mathscr{W}_n)). 
\end{equation}
The bundle $\mathscr{W}_n$ is simply the rank $n$ trivial vector bundle on $C$. A section $f$, and the quasimap it defines, is called stable if the maps $\mathscr{V}_i \to \mathscr{V}_{i + 1}$ are injective on all but finitely many fibers. Equivalently, $f$ is stable if it embeds the locally free sheaf $\mathscr{V}_i$ as a coherent subsheaf of $\mathscr{V}_{i + 1}$. A quasimap $f$ is called nonsingular if it embeds $\mathscr{V}_i$ as a vector subbundle of $\mathscr{V}_{i + 1}$; such an $f$ defines a genuine map $f: C \to X$. 

The moduli space of stable quasimaps to $X$ is then
\begin{equation} \label{qmapdef}
    \mathsf{QM}(X) = \{ \text{bundles $\mathscr{V}_i$ with a stable section $f$} \} / \text{isomorphism}
\end{equation}
where we mod out by isomorphisms which act by the identity on $C$ and the fixed trivial framing bundle $\mathscr{W}_n$. The group $GL_n = \text{Aut}(\mathscr{W}_n)$ acts naturally on $\mathsf{QM}(X)$, as does the group $\mathbb{C}^\times_q = \text{Aut}(C, 0, \infty)$ of automorphisms of $C$ fixing two points. 

\subsubsection{}
The value of a stable quasimap $f$ at a point $p \in C$ is well-defined as a $\prod_i GL(V_i)$-orbit in the prequotient in \eqref{quotientpres}. In other words, the evaluation map $\text{ev}_p: \mathsf{QM}(X) \to \mathscr{X}$ is well defined as a map to the ambient quotient stack $\mathscr{X}$ containing $X$ as an open substack. The evaluation map to $X$ for any fixed $p$ is well-defined only as a rational map.

Then, fixing $\infty \in C \simeq \mathbb{P}^1$, there is a maximal open subset 
\begin{equation}
    \mathsf{QM}^\circ(X) \subset \mathsf{QM}(X)
\end{equation}
of quasimaps for which $\text{ev}_\infty: \mathsf{QM}^\circ(X) \to X$ is well-defined. These are precisely the quasimaps which are nonsingular at $\infty$. We will be interested in performing $K$-theoretic computations on $\mathsf{QM}^\circ(X)$.  

\subsubsection{}
The moduli space $\mathsf{QM}^\circ(X)$ is, rather surprisingly in the context of enumerative problems, a smooth quasiprojective variety (this can be seen immediately using the identification of the fibers of $\text{ev}_\infty$ with handsaw quiver varieties, and the smoothness of the latter \cite{nakajima2011}). 

The bundles $\mathscr{V}_i$, $\mathscr{W}_n$ entering the quasimap data clearly sweep out tautological bundles on $\mathsf{QM}^\circ(X) \times C$, which by abuse of notation we call by the same name. The $K$-theory class of the tangent bundle to $\mathsf{QM}^\circ(X)$ is expressed via tautological bundles as 
\begin{equation} \label{TQMkclass}
    T\mathsf{QM}^\circ(X) = \bigoplus_{i = 1}^{n -1} \text{Ext}^\bullet(\mathscr{V}_i, \mathscr{V}_{i + 1}) - \bigoplus_{i = 1}^{n - 1} \text{Ext}^\bullet(\mathscr{V}_i, \mathscr{V}_i) \in K_{\mathbb{C}^\times_q \times GL_n}(\mathsf{QM}^\circ(X))
\end{equation}
where we work in the $\mathbb{C}^\times_q \times GL_n$-equivariant $K$-group. All cohomology is taken along $C$, $\text{Ext}^\bullet := \text{Ext}^0 - \text{Ext}^1$, and we understand that $\mathscr{V}_n = \mathscr{W}_n$ whenever it appears in this formula.

\subsubsection{A remark on $X$ vs. $T^*X$} \label{Xvscotangent}
In modern enumerative problems, one is less interested in classical target spaces like $X = GL_n/B$ and more interested in targets with a symmetric perfect obstruction theory like $T^*(GL_n/B)$. The moduli spaces of quasimaps to each target are closely related, but the cotangent directions are well-known to enter computations by modifying the obstruction theory. It is also natural and important to take counts equivariantly with respect to the $\mathbb{C}^\times$ action scaling the cotangent directions, whence counts to $T^*(GL_n/B)$ depend on an additional parameter $t$. 

$T^*(GL_n/B)$ is a Nakajima quiver variety, and so can in principle be approached using the technology of \cite{okounkovpcmi}. However, with other applications in mind (in particular, categorification of the counts and applications to the Langlands program, see \cite{ko24}) the following strategy is more technically convenient. Simply \textit{define} the moduli space $\mathsf{QM}^\circ(T^*X)$ to be the derived scheme $T^*[1]\mathsf{QM}^\circ(X)$. It is easy to see that the square root $\mathscr{K}_\text{vir}^{1/2}$ exists, and after tensoring the canonically defined virtual structure sheaf by this line bundle, the counts will agree with those defined in \cite{okounkovpcmi} up to a shift of the $z$-variables (this shift will resurface in Theorem \ref{heckeeigenval} below). After this shift has been performed, the counts to $T^*X$ are essentially the Hirzebruch genera of $\mathsf{QM}^\circ(X)$. 

To reiterate, for the purposes of this paper, the $\mathbb{C}^\times_t$-equivariant counts to $T^* X$ can be taken by definition to be the Hirzebruch genera of $\mathsf{QM}^\circ(X)$. This exploits the fact that $T^*X$ is globally a cotangent bundle, which is certainly not true for general Nakajima varieties. 

\subsection{Vertex function and Hecke operators} \label{vertexhecke}
The principal object of interest in enumerative computations is the so-called vertex function, the generating function of equivariant curve counts in genus zero. In this section we will recall the definition of the $K$-theoretic vertex function of $T^*X$ in a form convenient for our purposes and define the nonabelian shift operators. We will give a geometric identification of the difference equation in equivariant variables satisfied by the vertex function, by inductively Hecke modifying the bundles $\mathscr{V}_i$ entering the quasimap data. A corollary of our proof is that the vertex function is an eigenfunction of the Hecke operators acting on the bundle $\mathscr{W}_n$, in an appropriate sense. 

\subsubsection{}
$\mathsf{QM}^\circ(X)$ consists of infinitely many finite-dimensional connected components, indexed by the degree of the quasimap $f$. By definition, 
\begin{equation}
    \text{deg} \, f = (\text{deg}(\mathscr{V}_1), \text{deg}(\mathscr{V}_2), \dots, \text{deg}(\mathscr{V}_{n - 1})) \in \mathbb{Z}^{n - 1}. 
\end{equation}
We let $\mathsf{QM}^\circ_{\text{deg} \, f}(X)$ denote a component of fixed degree. 

Introduce an $n$-tuple of coordinates $(z_1, \dots, z_n) \in (\mathbb{C}^\times)^n$. The generating function of counts of quasimaps to $T^*X$ is by definition 
\begin{equation} \label{vertexinstpart}
    \mathbf{Z} = \sum_{\text{deg} \, f} z^{\text{deg} \, f} \chi(\mathsf{QM}^\circ_{\text{deg} \, f}(X)) = (\mathsf{QM}^\circ (X) \to \text{pt})_*\Big( z^{\text{deg} \, f} \sum_i (-t)^i \Omega^i_{\mathsf{QM}^\circ(X)} \Big) \in K_{\mathbb{C}^\times_q \times \mathbb{C}^\times_t \times GL_n}(\text{pt})_{\text{localized}} \otimes \mathbb{Q}[\![ z ]\!].
\end{equation}
By definition, 
\begin{equation}
    z^{\text{deg} \, f} = \prod_{i = 1}^n z_i^{-(\text{deg}(\mathscr{V}_i) - \text{deg}(\mathscr{V}_{i - 1}))}.
\end{equation}
We remark that, due to the stability condition, the sum runs only over $\deg \, f$ in the effective cone $\deg \mathscr{V}_1 \leq \deg \mathscr{V}_2 \leq \dots \leq \deg \mathscr{V}_{n - 1} \leq \text{deg} \mathscr{W}_n = 0$. In fact, the series converges in a finite region of the $z_i$-variables as long as $|q| < 1$, so it is indeed appropriate to view $z_i$ as $\mathbb{C}^\times$-valued rather than as a formal variable. The coefficient of a given power of $z$ in the series is a polynomial in $t$ with coefficients that are rational functions on $\text{Spec} \, K_{\mathbb{C}^\times_q \times GL_n}(\text{pt})$. We will denote the equivariant variables for a maximal torus $A \subset GL_n$ by $(a_1, \dots, a_n) \in A \subset GL_n$. 

As will become clear momentarily, $z$ should be viewed canonically as an element of the Langlands dual torus $A^\vee \subset GL_n^\vee \simeq GL_n$.

\subsubsection{}
We will study the behavior of these counts as the framing bundle $\mathscr{W}_n$ is modified. Fixing a dominant minuscule cocharacter $\mu$ of $GL_n$, we define 
\begin{equation}
    \mathsf{QM}^{\circ, \mu}(X) := \{ \text{bundles $\mathscr{V}_i$ with a stable section $f$, \newline $\text{\, Hecke modification $0 \to \mathscr{O}_C^{\oplus n} \to \mathscr{W}^\mu_n \to \mathscr{O}_0 \otimes \mathscr{E} \to 0$}$} \}^\circ/\sim. 
\end{equation} 
As usual, the symbol $^\circ$ means we restrict to the open subset of such data where singularities of the quasimap are disjoint from $\infty \in C$, and notations for Hecke modifications are as in Section \ref{geometricdiffeq}. To be completely explicit, $f$ in our moduli space is a section of 
\begin{equation}
    f \in H^0(C, \mathscr{H}om(\mathscr{V}_1, \mathscr{V}_2) \oplus \dots \oplus \mathscr{H}om(\mathscr{V}_{n - 1}, \mathscr{W}^\mu_n))
\end{equation}
and we mod out by isomorphisms that are required to be $1$ on $C$ and the framing bundle as before. In English, the moduli space is almost the same except instead of pinning down $\mathscr{W}_n = \mathscr{O}_C^{\oplus n}$, we give it a small degree of freedom to move between the first and third term in the exact sequence describing Hecke modification. 

Then the evaluation map $\text{ev}_\infty: \mathsf{QM}^{\circ, \mu}(X) \to X$ is well-defined, since by definition of Hecke modification we are given a trivialization $\mathscr{W}^\mu_n \eval_{C \setminus 0} \xrightarrow{\sim} \mathscr{O}_C^{\oplus n} \eval_{C \setminus 0}$ on the complement of $0 \in C$. For the same reason the group $GL_n$ continues to act on the moduli space by changing the trivialization away from $0 \in C$. There is a canonical $GL_n$-equivariant map to the corresponding stratum of the affine Grassmannian
\begin{equation}
    \mathsf{QM}^{\circ, \mu}(X) \to \text{Gr}^\mu_{GL_n}. 
\end{equation}
Note that $\text{Gr}^\mu_{GL_n}$ is smooth and proper when $\mu$ is minuscule. The fibers of this projection may be identified with spaces of twisted quasimaps, in the language of \cite{okounkovpcmi}. This map gives $\mathsf{QM}^{\circ, \mu}(X)$ a natural obstruction theory, with virtual tangent space given by the sum of the virtual tangent space along the fibers as discussed in \cite{okounkovpcmi} together with the pullback of the tangent bundle to $\text{Gr}^\mu_{GL_n}$ under the canonical map. This comment applies to more general targets $X$, and we may even contemplate modifying the obstructions by pulling back additional bundles from $\text{Gr}^\mu_{GL_n}$. The latter generalization is relevant to the study of Coulomb branches of gauge theories with matter \cite{bfn}, \cite{bfnslice}. 

Then in particular for $X = GL_n/B$ we have a well-defined twisted count
\begin{equation}
    \mathbf{Z}^\mu = \sum_{\text{deg} \, f} z^{\text{deg} \, f} \chi(\mathsf{QM}^{\circ, \mu}_{\text{deg} \, f}(X)). 
\end{equation}
Geometrically, this may be viewed as a virtual count of (quasi)sections of a nontrivial $T^* X$ bundle over $C$ determined by $\mathscr{W}_n^{\mu}$, where we additionally allow the bundle to vary along the $\text{Gr}^\mu_{GL_n}$ directions. All counts are performed with equivariance under the natural action of $\mathbb{C}^\times_q$ and $GL_n$, as usual. 

Our first goal is to relate the twisted count to the untwisted count. By  $\mathbb{C}^\times_q \times A$-equivariant localization, this can be reduced to the comparison already established in Lemma 8.2.12 of \cite{okounkovpcmi}. For the convenience of readers, we provide a self-contained argument in the next few sections. Much of what follows is true for rather general targets $X$, so we try to use specific features of flag varieties as minimally as possible. 

\subsubsection{}
First, since we now count sections of nontrivial bundles over $C$, the degree of a constant map may already be nontrivial, and this must be taken into account. 

To state this more precisely, let $\sigma: \mathbb{C}^\times \to GL_n$ be a cocharacter. To this we may associate the fixed locus $X^\sigma \subset X$ under the corresponding one-parameter subgroup. We may also construct a principal $GL_n$-bundle over $\mathbb{P}^1$ using $\sigma$ as a clutching function, and pass to the associated $X$-bundle. We denote this $X$-bundle by $\widetilde{X}$. 

Each element $x \in X^\sigma$ defines a ``constant'' section of $\widetilde{X}$. By definition, a section of $\widetilde{X}$ is the same thing as a flag of subbundles $\mathscr{V}_1 \subset \mathscr{V}_2 \subset \dots \subset \mathscr{W}_n^\sigma$, where $\mathscr{W}_n^\sigma$ is the rank $n$ vector bundle associated to the underlying principal $GL_n$-bundle. An element $x \in X^\sigma$ is precisely the same thing as a choice of $\mathbb{C}^\times$-module structure on each vector space $V_i$ in \eqref{quotientpres} (with $W_n$ regarded as a $\mathbb{C}^\times$-module via $\sigma$) and a choice of $\mathbb{C}^\times$-equivariant quiver maps $V_i \to V_{i + 1}$ (taken modulo the centralizer of $\mathbb{C}^\times$ in $\prod_i GL(V_i)$). Then, for each $x \in X^\sigma$, we have an associated flag of vector bundles $\mathscr{V}_i^x$ on $\mathbb{P}^1 = (\mathbb{C}^2 \setminus 0 )/\mathbb{C}^\times$, which is the ``constant'' section. Its degree is  
\begin{equation}
    \text{deg} \, (x \in X^\sigma) = (\text{deg} \mathscr{V}_1^x, \dots, \text{deg} \mathscr{V}_{n - 1}^x)
\end{equation}
which depends on $x$ in a locally constant fashion.

If the image of $\sigma$ lies inside some maximal torus $A \subset GL_n$, we have an inclusion $X^A \subset X^\sigma$ and we define an element $z^{\text{deg}(\text{const})} \in K_A(X)_{\text{localized}} \otimes \mathbb{Q}(\!( z )\!)$ by restrictions
\begin{equation}
    z^{\text{deg}(\text{const})} \eval_{p \in X^A} = \prod_{i = 1}^n z_i^{-(\text{deg}(\mathscr{V}_i^p) - \text{deg}(\mathscr{V}_{i - 1}^p))}.
\end{equation}

\subsubsection{}
For the statement of the next proposition, it is convenient to extend the $\varphi_q$-function introduced in Section \ref{dilog} to $K$-theory as a genus. This means that, if $\mathscr{E}$ and $\mathscr{F}$ are $K$-classes of vector bundles, for the virtual bundle $\mathscr{E} - \mathscr{F}$ we put
\begin{equation}
\varphi_q(\mathscr{E} - \mathscr{F}) = \prod_{x \in \text{Chern roots of $\mathscr{E}$}} \varphi_q(x) \times \prod_{y \in \text{Chern roots of $\mathscr{F}$}} \frac{1}{\varphi_q(y)}. 
\end{equation}
For each $|q| < 1$ this defines a certain analytic $K$-theory class, that is, a section of the sheaf of analytic functions on the spectrum of the $K$-theory ring. 

\subsubsection{}
For the statement of the next proposition, note we can factor the pushforward in \eqref{vertexinstpart} through evaluation at infinity: 
\begin{equation}
    \mathbf{Z} = (X \to \text{pt})_* (\text{ev}_{\infty})_*(z^{\text{deg} \, f} \mathscr{O}^{\text{vir}}_{\mathsf{QM}^\circ(X)}).
\end{equation}
Note also that the Weyl group $W$ of $GL_n$ acts naturally on the cocharacters, and the $A$-fixed locus inside $\text{Gr}^\mu_{GL_n}$ consists precisely of cocharacters $\sigma: \mathbb{C}^\times \to A$ in the Weyl orbit $W \cdot \mu$ of the dominant cocharacter $\mu$. Then we have the following 

\begin{prop} \label{instcomparison}
\begin{equation} \label{comparisonforZ}
\mathbf{Z}^\mu = \sum_{\sigma \in W \cdot \mu} \frac{\Lambda^\bullet(t T^*_\sigma \text{Gr}^\mu_{GL_n})}{\Lambda^\bullet(T^*_\sigma \text{Gr}^\mu_{GL_n})} \times (X \to \text{pt})_*\Bigg( z^{\text{deg}(\text{const})} \frac{\varphi_q(q(1 - t)T^*X)}{\varphi_q(q(1 - t)T^*X) \eval_{a \mapsto aq^{-\sigma}}} \times \Big( (\text{ev}_\infty)_*(z^{\text{deg} \, f} \mathscr{O}^{\text{vir}}_{\mathsf{QM}^{\circ}(X)}) \Big) \eval_{a \mapsto aq^{-\sigma}} \Bigg). 
\end{equation}
The pushforwards are defined by $\mathbb{C}^\times_q \times A$-equivariant localization, and the notation $a \mapsto aq^{-\sigma}$ means that in the localization formula we replace the $A$-weights evaluated on $a \in A$ with the $A$-weights evaluated on $\sigma(q)^{-1} \cdot a \in A$. 
\end{prop}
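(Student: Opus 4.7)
The plan is to apply $\mathbb{C}^\times_q \times A$-equivariant localization to the definition of $\mathbf{Z}^\mu$ using the canonical projection $\pi : \mathsf{QM}^{\circ, \mu}(X) \to \text{Gr}^\mu_{GL_n}$ constructed above, and then identify each fixed-point contribution with a shifted copy of the untwisted vertex computation. Since $\mu$ is minuscule and dominant, the $A$-fixed locus of $\text{Gr}^\mu_{GL_n}$ consists of the isolated points $z^\sigma$, $\sigma \in W \cdot \mu$, and the Hirzebruch-style normal bundle contribution on the base at such a point is exactly the factor $\Lambda^\bullet(tT^*_\sigma \text{Gr}^\mu_{GL_n})/\Lambda^\bullet(T^*_\sigma \text{Gr}^\mu_{GL_n})$ appearing in \eqref{comparisonforZ}, arising as in the proof of Theorem \ref{flopinv}.

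Next I would identify the scheme-theoretic fiber $\pi^{-1}(z^\sigma)$. By unwinding the definition of $\mathsf{QM}^{\circ, \mu}(X)$, this fiber parameterizes collections of bundles $\mathscr{V}_i$ with a stable section targeting the fixed framing bundle $\mathscr{W}^\sigma_n$ of Section \ref{vertexhecke}, the rank $n$ vector bundle on $C$ associated to the cocharacter $\sigma$ by the clutching construction. Trivializing $\mathscr{W}^\sigma_n$ in a formal neighborhood of $0$ identifies this fiber $A$-equivariantly with $\mathsf{QM}^\circ(X)$, provided one shifts the $A$-weight of every tautological object by $\sigma$. Because the clutching is performed along $\mathbb{C}^\times_q$-equivariantly, this shift is pinned down to be $a \mapsto a q^{-\sigma}$, with the sign determined by the same convention for $\mathbb{C}^\times_q$ near $0 \in C$ used in Section \ref{diffopsflop} and our conventions for the exponent of $\mu$ in $\text{Gr}^\mu_G$. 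The prefactor $z^{\text{deg}(\text{const})}$ accounts for the fact that the $z$-grading on $\mathsf{QM}^{\circ, \mu}(X)$ measures the degrees of the $\mathscr{V}_i$ relative to $\mathscr{W}^\sigma_n$ rather than the trivial bundle, so that the constant sections already contribute the weight recorded in $z^{\text{deg}(\text{const})}$.

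What remains is the comparison of virtual structure sheaves. Applying the formula \eqref{TQMkclass} with the framing replaced by $\mathscr{W}^\sigma_n$ shows that the twisted virtual tangent bundle at the fiber decomposes as the untwisted one evaluated at $a \mapsto a q^{-\sigma}$ plus a correction which is formally a difference of two infinite-dimensional $\text{Ext}^\bullet$-groups along $C$. Regularizing this correction in the $\varphi_q$-sense of Section \ref{dilog}, together with the insertion of $\mathscr{K}^{1/2}_{\text{vir}}$ of Section \ref{Xvscotangent} which converts $K$-theoretic Euler classes into Hirzebruch $(1-t)$-combinations, collapses the correction to the ratio
\begin{equation*}
    \frac{\varphi_q(q(1 - t)T^*X)}{\varphi_q(q(1 - t)T^*X)\eval_{a \mapsto a q^{-\sigma}}}
\end{equation*}
at the evaluation point $\text{ev}_\infty$; the $q$ reflects the weight of the coordinate at $0 \in C$ and the fiber of $T^*X$ appears precisely because the quasimap values at $0$ are the only ambiguity in the formal data. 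Summing the resulting fixed-point contributions over $\sigma \in W \cdot \mu$ yields \eqref{comparisonforZ}.

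The main obstacle is the last step: identifying the regularized $\varphi_q$-character of the infinite-dimensional mismatch between the twisted and untwisted virtual tangent bundles with the clean expression $\varphi_q(q(1-t)T^*X)/\varphi_q(q(1-t)T^*X)\eval_{a \mapsto a q^{-\sigma}}$ depending only on the fiber of $T^*X$ at infinity, and verifying that no further global contribution survives. This is essentially the content of Lemma 8.2.12 of \cite{okounkovpcmi} applied to our situation, and is what makes the comparison formula so compact.
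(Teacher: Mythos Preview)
Your overall strategy matches the paper's: localize on $\mathsf{QM}^{\circ,\mu}(X)$ with respect to $\mathbb{C}^\times_q \times A$, sum over the torus-fixed cocharacters $\sigma \in W\cdot\mu$ in $\text{Gr}^\mu_{GL_n}$, and compare each fiber with the untwisted moduli space via the substitution $a \mapsto aq^{-\sigma}$. The base and degree-shift contributions are handled correctly.

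Your account of the final step, however, is conceptually off and makes the argument look harder than it is. You describe the tangent-bundle discrepancy as an ``infinite-dimensional mismatch'' requiring $\varphi_q$-regularization, and you locate its origin at $0 \in C$. The paper does neither. It applies equivariant Grothendieck--Riemann--Roch to write the tangent space at a fixed point as a sum of contributions from $0$ and from $\infty$,
\[
T\mathsf{QM}^\circ(X)\big|_{\text{fp}} \;=\; \frac{\mathscr{T}_X|_0}{1-q^{-1}} \;+\; \frac{\text{ev}_\infty^*(TX)}{1-q}\,.
\]
The Hecke modification at $0$ shifts only the first summand by $a \mapsto aq^{-\sigma}$ and leaves the $\infty$-summand untouched. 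Subtracting the untwisted tangent space with \emph{all} weights shifted therefore cancels the $0$-contributions exactly, leaving
\[
T_\sigma\text{Gr}^\mu_{GL_n} \;+\; \frac{\text{ev}_\infty^*(TX)}{1-q} \;-\; \frac{\text{ev}_\infty^*(TX)}{1-q}\Big|_{a\mapsto aq^{-\sigma}}\,,
\]
which is manifestly pulled back under $\text{ev}_\infty$ (hence factors out of $(\text{ev}_\infty)_*$) and is a finite Laurent polynomial in $q$. No regularization is involved; the $\varphi_q$ ratio appears only upon rewriting $\tfrac{1}{1-q} = -\tfrac{q^{-1}}{1-q^{-1}}$ so that the resulting products converge for $|q|<1$. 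In short, the correction lives at $\infty$, not at $0$, precisely because the Hecke modification is supported away from $\infty$.
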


\begin{proof}
The proof is just an analysis of the equivariant localization formula for $\mathbf{Z}^\mu$, using the action of $\mathbb{C}^\times_q \times A$ on $\mathsf{QM}^{\circ, \mu}(X)$. The $A$-fixed points on $\text{Gr}^\mu_{GL_n}$ are the cocharacters $\sigma$ entering \eqref{comparisonforZ} above, and we may analyze the contributions of $\mathbb{C}^\times_q \times A$-fixed points in the fibers as follows. 

It is elementary to see that the fixed locus of $\mathbb{C}^\times_q \times A$ acting on $\mathsf{QM}^\circ(X)$ and $\mathsf{QM}^{\circ, \mu}(X)$ consists of isolated points, and that there is a degree-shifting bijection between $\mathbb{C}^\times_q \times A$-fixed points in $\mathsf{QM}^\circ(X)$ and $\mathbb{C}^\times_q \times A$-fixed points in the fiber of $\mathsf{QM}^{\circ, \mu}(X)$ over any $A$-fixed point $\sigma \in \text{Gr}^\mu_{GL_n}$. 

Let $\mathscr{T}_X = \bigoplus_i \mathscr{V}_i \otimes \mathscr{V}_{i - 1}^* - \bigoplus_i \mathscr{V}_i \otimes \mathscr{V}_i^* \in K_{\mathbb{C}^\times_q \times GL_n}(\mathsf{QM}^\circ(X) \times C)$. Note $\mathscr{T}_X \eval_\infty \simeq \text{ev}_\infty^*(TX)$. Then equivariant Grothendieck-Riemann-Roch applied to \eqref{TQMkclass} gives 
\begin{equation}
    T\mathsf{QM}^\circ(X) \eval_{\text{fixed point}} = \frac{\mathscr{T}_X \eval_0}{1 - q^{-1}} + \frac{\text{ev}_\infty^*(TX)}{1 - q}
\end{equation}
with it understood the virtual vector bundles in the numerators are restricted to the corresponding fixed point, that is, they are just Laurent polynomials on $\mathbb{C}^\times_q \times A$. By definition of Hecke modification, and due to the bijection discussed above, over any fixed point $\sigma \in \text{Gr}^\mu_{GL_n}$ we have 
\begin{equation}
    T\mathsf{QM}^{\circ, \mu}(X) \eval_{(\sigma, \text{fixed point})} = T_\sigma \text{Gr}^\mu_{GL_n} + \frac{\mathscr{T}_X \eval_0}{1 - q^{-1}} \eval_{a \mapsto a q^{-\sigma}} + \frac{\text{ev}_\infty^*(TX)}{1 - q}. 
\end{equation}
We conclude immediately that 
\begin{equation}
    T \mathsf{QM}^{\circ, \mu}(X) \eval_{(\sigma, \text{fixed point})} - T\mathsf{QM}^\circ(X) \eval_{\text{fixed point, $a \mapsto aq^{-\sigma}$}} = T_\sigma \text{Gr}^\mu_{GL_n} + \frac{\text{ev}_\infty^*(TX)}{1 - q} - \frac{\text{ev}_\infty^*(TX)}{1 - q} \eval_{a \mapsto a q^{-\sigma}}.
\end{equation}
Now everything but the first term is pulled back under $\text{ev}_\infty$, so pulls out of the pushforward $(\text{ev}_\infty)_*$. Finally observe that 
\begin{equation}
    \frac{\text{ev}_\infty^*(TX)}{1 - q} = - \frac{q^{-1} \text{ev}_\infty^*(TX)}{1 - q^{-1}}
\end{equation}
whence, if we want to write formulas in terms of infinite products convergent for $|q| < 1$, this contributes $\varphi_q(q(1 - t) T^*X)$ in localization formulas with the virtual structure sheaf as we have defined it. Taking into account the degree shift in the bijection of fixed points we conclude exactly \eqref{comparisonforZ}. 
\end{proof}

\subsubsection{}
The statement of \eqref{comparisonforZ} can be simplified considerably if we introduce the appropriately normalized \textit{vertex functions}, defined as 
\begin{equation} \label{normalizedvertex}
    \mathsf{Vertex}^\mu = (X \to \text{pt})_*\Bigg( z^{\frac{\log \det V}{\log q}} \times \frac{\varphi_q(qtT^* X)}{\varphi_q(qT^*X)} \times (\text{ev}_\infty)_*( z^{\text{deg} \, f} \mathscr{O}^{\text{vir}}_{\mathsf{QM}^{\circ, \mu}(X)}) \Bigg)
\end{equation}
We write $\mathsf{Vertex} := \mathsf{Vertex}^{\mu = 0}$ for the vertex function with trivial twist. We introduced the notation 
\begin{equation}
    z^{ \frac{\log \det V}{\log q}} := \prod_{i = 1}^n z_i^{-\frac{\log(\det V_i) - \log( \det V_{i - 1})}{\log q}} 
\end{equation}
where logarithms are defined by substitution of Chern roots. For each $z$ this gives rise to a localized equivariant $K$-theory class on $X$, see Section 8.2 of \cite{okounkovpcmi} for more discussion on logarithms of line bundles in equivariant $K$-theory. 

The vertex functions are analytic functions of both the $z$ and $a$-variables (although defined a priori as a formal power series in $z$). We will sometimes write $\mathsf{Vertex}(z, a)$ to emphasize this dependence. The variables $q$ and $t$ are usually regarded as fixed, with $|q| < 1$. 

The vertex functions introduced above differ more or less by normalizations with \eqref{vertexinstpart}. The prefactors are referred to as tree-level and perturbative contributions, in reference to the interpretation of $\mathsf{Vertex}$ as a partition function in a certain supersymmetric quantum field theory. In that language, the function $\mathbf{Z}$ contains only the instanton contributions to the vertex function.  

In any case, with normalizations taken into account Proposition \ref{instcomparison} may be rephrased as 
\begin{prop} \label{vertexcomparison}
Suppose we choose the minuscule dominant cocharacter $\mu$ to be the highest weight of the $\Lambda^k(\mathbb{C}^n)$ representation of $GL_n^\vee$. Then we have 
\begin{equation}
    \mathsf{Vertex}^\mu(z, a) = \mathsf{Vertex}(z, a) \cdot \widehat{U}^{(a)}_{\Lambda^k(\mathbb{C}^n)}
\end{equation}
with the difference operator $\widehat{U}^{(a)}_{\Lambda^k(\mathbb{C}^n)}$ as in Section \ref{diffopsflop}. 
\end{prop}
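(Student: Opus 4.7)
The proposition should follow almost directly from Proposition \ref{instcomparison}, which already writes $\mathbf{Z}^\mu$ as a sum over the Weyl orbit $W \cdot \mu$ of shifted copies of $\mathbf{Z}$. My plan is to substitute this identity into the definition \eqref{normalizedvertex} of $\mathsf{Vertex}^\mu$, and then verify that, summand by summand, the tree-level and perturbative normalizations combine with the $\sigma$-dependent prefactors from Proposition \ref{instcomparison} to exactly reproduce $\mathsf{Vertex}$ evaluated at the shifted argument, multiplied by the Macdonald coefficient attached to $\sigma$.

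The first step is to unpack the Weyl orbit $W \cdot \mu$. For minuscule $\mu$ corresponding to $\Lambda^k(\mathbb{C}^n)$, this orbit is in bijection with subsets $I \subset \{1,\dots,n\}$ of size $k$, via $\sigma_I$ having $\sigma_i = 1$ for $i \in I$ and $\sigma_i = 0$ otherwise. The orbit $\text{Gr}^\mu_{GL_n}$ is $\text{Gr}(k,n)$ and its $A$-fixed points are precisely the $\sigma_I$; from the description in Section \ref{charmaps} the tangent weights at $\sigma_I$ are $a_j/a_i$ for $i \in I$ and $j \notin I$, so
\begin{equation*}
\frac{\Lambda^\bullet(t T^*_{\sigma_I} \text{Gr}^\mu_{GL_n})}{\Lambda^\bullet(T^*_{\sigma_I} \text{Gr}^\mu_{GL_n})} = \prod_{\substack{i \in I \\ j \notin I}} \frac{1 - t a_j/a_i}{1 - a_j/a_i},
\end{equation*}
which is the coefficient of $\prod_{i \in I} q^{D^{(a)}_i}$ inside $\widehat{U}^{(a)}_{\Lambda^k(\mathbb{C}^n)}$ as defined in Section \ref{diffopsflop}. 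So the outer sum and outer Euler factor in Proposition \ref{instcomparison} already match the structure of the Macdonald operator.

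The second step is the cancellation of $\varphi_q$-factors. Using $\varphi_q(A - B) = \varphi_q(A)/\varphi_q(B)$ one has the identity $\varphi_q(q(1-t)T^*X) = \varphi_q(qT^*X)/\varphi_q(qtT^*X)$, so the perturbative normalization in \eqref{normalizedvertex} times the ratio appearing inside Proposition \ref{instcomparison} collapses:
\begin{equation*}
\frac{\varphi_q(qtT^*X)}{\varphi_q(qT^*X)} \cdot \frac{\varphi_q(q(1-t)T^*X)}{\varphi_q(q(1-t)T^*X)\eval_{a \mapsto a q^{-\sigma}}} = \frac{\varphi_q(qtT^*X)}{\varphi_q(qT^*X)} \bigg|_{a \mapsto a q^{-\sigma}}.
\end{equation*}
This is exactly the perturbative normalization of $\mathsf{Vertex}$ with the shifted argument $a \mapsto a q^{-\sigma}$. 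The third step treats the tree-level factor in the same spirit: at each $A$-fixed point $p \in X^A$, the bundles $V_i|_p$ are $A$-modules whose $\sigma$-trace equals $\deg \mathscr{V}^p_i$, and therefore $\log \det V_i\eval_{a \mapsto a q^{-\sigma}} = \log \det V_i - \deg \mathscr{V}_i^p \log q$; combining this computation with the definition of $z^{\deg(\text{const})}$ shows that $z^{\log \det V/\log q}$ and $z^{\deg(\text{const})}$ conspire to produce the shifted tree-level factor of $\mathsf{Vertex}(z, a q^{-\sigma})$.

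Putting the three steps together, the $\sigma_I$-summand of the expression obtained by substituting \eqref{comparisonforZ} into the definition of $\mathsf{Vertex}^\mu$ reads as the Macdonald coefficient for $I$ times $\mathsf{Vertex}(z, a q^{-\sigma_I})$, which is precisely the output of acting by $\widehat{U}^{(a)}_{\Lambda^k(\mathbb{C}^n)}$ on $\mathsf{Vertex}(z,a)$ from the right. The only real obstacle is the bookkeeping of signs and conventions (the direction of the shift $a \mapsto a q^{-\sigma}$ vs. the right-action convention for $q^{D^{(a)}_i}$, and the compatibility between the $z$-exponent in $z^{\log \det V/\log q}$ and the degree shift from the Hecke modification); this is purely mechanical once one fixes the conventions of Section \ref{vertexhecke}, so no new geometric input beyond Proposition \ref{instcomparison} is required.
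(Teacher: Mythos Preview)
Your proposal is correct and is exactly the approach the paper takes: the paper states Proposition \ref{vertexcomparison} simply as a rephrasing of Proposition \ref{instcomparison} ``with normalizations taken into account'' and gives no further argument, so what you have written is precisely the omitted bookkeeping. Your identification of the Weyl orbit with size-$k$ subsets, the cancellation of the $\varphi_q$-prefactors, and the matching of the tree-level $z$-exponent with $z^{\deg(\text{const})}$ are all as intended; the sign/direction issues you flag are indeed purely conventional and are fixed by the right-action convention spelled out after Proposition \ref{shift1}.
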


\subsubsection{}
Proposition \ref{vertexcomparison} gives a geometric interpretation of the difference operator $\widehat{U}_{\Lambda^k(\mathbb{C}^n)}$ as integration over the orbit $\text{Gr}^\mu_{GL_n} \subset \text{Gr}_{GL_n}$. Now we will use this interpretation to arrive at a difference equation for $\mathsf{Vertex}(z, a)$ in the $a$-variables, by using the results of Section \ref{geometricdiffeq}. The basic idea will be to use the results there to formulate a different comparison of the twisted count $\mathsf{Vertex}^\mu$ and untwisted count $\mathsf{Vertex}$. 

This is done as follows. By definition of Hecke modification, the modified framing bundle $\mathscr{W}_n^\mu$, viewed as a bundle on $\mathsf{QM}^{\circ, \mu}(X) \times C$, fits into the following exact sequence of sheaves
\begin{equation}
    0 \to \mathscr{W}_n \to \mathscr{W}^\mu_n \to \mathscr{O}_0 \otimes \mathscr{E} \to 0
\end{equation}
where $\mathscr{O}_0$ is the pullback of the structure sheaf of $0 \in C$ and the bundle $\mathscr{E}$ is the pullback of the tautological bundle on the moduli space of Hecke modifications $\text{Gr}^\mu_{GL_n} \simeq \text{Gr}(k, n)$ for some $k$ determined by $\mu$. Observe that, because we view $\text{Gr}(k, n)$ as the moduli space of Hecke modifications, $\mathbb{C}^\times_q$ scales the fibers of $\mathscr{E}$ by the character $q$. 

The associated long exact sequence of $\text{Ext}$-groups along $C$ reads, in $K$-theory of quasimap moduli,
\begin{equation}
    \text{Ext}^\bullet(\mathscr{V}_{n - 1}, \mathscr{W}^\mu_n) = \text{Ext}^\bullet(\mathscr{V}_{n - 1}, \mathscr{W}_n) + \text{Ext}^\bullet(\mathscr{V}_{n - 1}, \mathscr{O}_0 \otimes \mathscr{E}) = \text{Ext}^\bullet(\mathscr{V}_{n - 1}, \mathscr{W}_n) + \mathscr{V}^*_{n - 1} \eval_0 \otimes \mathscr{E}. 
\end{equation}

This allows us to relate the tangent bundle to $\mathsf{QM}^{\circ, \mu}(X)$ and $\mathsf{QM}^\circ(X)$, viewed as $K$-theory classes, in a fairly direct fashion (compare to the discussion in Section \ref{defineX}):
\begin{equation}
    T\mathsf{QM}^{\circ, \mu}(X) = T\mathsf{QM}^\circ(X) + T\text{Gr}^\mu_{GL_n} +  \mathscr{V}_{n - 1}^* \eval_0 \otimes \mathscr{E} \in K_{\text{eq}}(\mathsf{QM}^{\circ, \mu}(X)). 
\end{equation}

The equivariant Hirzebruch genera $\chi(X(k, n, m))$ of the spaces from Proposition \ref{Xasymp} define rational functions on $\text{Spec} \, K_{\mathbb{C}^\times_q \times GL_n \times GL_m}(\text{pt})$, whence localized $K$-theory classes on quasimap moduli for each pair of vector bundles of rank $n$ and $m$ by formal substitution of Chern roots. For $m = n - 1$, we denote such a class by $\chi(k, \mathscr{W}_n \eval_0, \mathscr{V}_{n - 1} \eval_0)$.

Finally, introduce the notation $\langle \mathscr{F} \rangle := (X \to \text{pt})_*(\text{prefactors} \times (\text{ev}_{\infty})_*(z^{\text{deg} f} \mathscr{O}^{\text{vir}}_{\mathsf{QM}^\circ(X)} \otimes \mathscr{F}))$ where $\mathscr{F}$ is any (perhaps localized) equivariant $K$-theory class on $\mathsf{QM}^\circ(X)$ and the prefactors are as in \eqref{normalizedvertex}. 

The discussion above amounts to the following 
\begin{prop} \label{heckeinsertion}
Let $\mu$ and $k$ be as in Proposition \ref{vertexcomparison}. Then we have
\begin{equation}
    \mathsf{Vertex}^\mu = \Big\langle z_n^{- \deg(\mathscr{W}^\mu_n)} \chi \Big(k, \mathscr{W}_n \eval_0, \mathscr{V}_{n - 1} \eval_0 \Big) \Big\rangle. 
\end{equation}
\end{prop}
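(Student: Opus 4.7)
The plan is to take the $K$-theoretic tangent-bundle identity
\[
T\mathsf{QM}^{\circ,\mu}(X) = T\mathsf{QM}^\circ(X) + T\text{Gr}^\mu_{GL_n} + \mathscr{V}_{n-1}^* \eval_0 \otimes \mathscr{E}
\]
that the excerpt derives from the long exact sequence of $\text{Ext}$-groups, recognize the last two summands as the tangent bundle to the smooth variety $X(k, n, n-1) = \text{Tot}(\text{Hom}(\mathbb{C}^{n-1}, \mathscr{E}) \to \text{Gr}(k, n))$ of Section \ref{Xkn}, and integrate out this extra factor by equivariant localization to produce the Hirzebruch-genus insertion.

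First, upon specializing the equivariant parameters $y_i$ and $x_\ell$ in the localization formulas of Section \ref{Xkn} to the Chern roots of $\mathscr{W}_n \eval_0$ and $\mathscr{V}_{n-1} \eval_0$, the sum $T\text{Gr}^\mu_{GL_n} + \mathscr{V}_{n-1}^* \eval_0 \otimes \mathscr{E}$ coincides with $TX(k, n, n-1)$ as an equivariant $K$-theory class. The $\mathbb{C}^\times_q$-weight conventions match on both sides because the intrinsic moduli-theoretic definition of $\text{Gr}^\mu_{GL_n}$ forces the fibers of $\mathscr{E}$ to carry $\mathbb{C}^\times_q$-weight $+1$, exactly as in the conventions of Section \ref{Xkn}. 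Multiplicativity of the Hirzebruch integrand $\sum_i(-t)^i\Omega^i$ over direct sums of tangent $K$-classes then factors the $\mathbb{C}^\times_q \times A$-localization formula for $\mathsf{Vertex}^\mu$: at each fixed point of $\mathsf{QM}^{\circ,\mu}(X)$ one obtains the product of the usual $\mathsf{QM}^\circ(X)$-contribution with a factor collecting the normal weights from the extra summands. Summing the latter factor over the $X(k, n, n-1)$-part of the fixed locus, which is indexed by the same subsets $I \subset \{1, \dots, n\}$ as the fixed locus of $\text{Gr}^\mu_{GL_n}$, reproduces the class $\chi(k, \mathscr{W}_n \eval_0, \mathscr{V}_{n-1} \eval_0)$ by the very definition adopted in the excerpt (formal substitution of Chern roots into the rational function $\chi(X(k, n, n-1))$).

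Finally, I would track the degree-counting variable. Passing from $\mathscr{W}_n$ to $\mathscr{W}^\mu_n$ alters only $\deg \mathscr{V}_n$, shifting the $z_n$-exponent $-(\deg\mathscr{V}_n - \deg\mathscr{V}_{n-1})$ of $z^{\deg f}$ by exactly $-\deg \mathscr{W}^\mu_n$; no other $z_i$-exponent is affected, producing the stated overall prefactor. The perturbative and tree-level normalization factors in \eqref{normalizedvertex} are pulled back from $X$ via $\text{ev}_\infty$ and are therefore unchanged. The main obstacle I anticipate is lifting the tangent-bundle identity from an equality of $K$-classes to an equality of (pushforwards of) virtual structure sheaves; since $X(k, n, n-1)$ is smooth and the extra directions in $\mathsf{QM}^{\circ,\mu}(X)$ relative to $\mathsf{QM}^\circ(X)$ are honest geometric directions rather than virtual ones, this should reduce to the same kind of fixed-point bookkeeping already executed in the proof of Proposition \ref{instcomparison}.
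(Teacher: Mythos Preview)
Your proposal is correct and follows essentially the same approach as the paper: the proposition there is stated as an immediate summary of the preceding discussion (the tangent-bundle identity, the identification of the extra summands with $TX(k,n,n-1)$, and the definition of the insertion class), and you have faithfully reconstructed that logic together with the degree-shift bookkeeping for $z_n$. Your remark about the potential obstacle is well-taken but, as you note, the smoothness of all spaces involved reduces everything to the same fixed-point analysis as in Proposition \ref{instcomparison}.
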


\subsubsection{}
Now we are in a position to state and prove

\begin{theorem} \label{heckeeigenval}
The normalized vertex function $\mathsf{Vertex}(z, a)$ satisfies the difference equation in $a$-variables
\begin{equation}
    \mathsf{Vertex} \cdot \widehat{U}^{(a)}_{\Lambda^k(\mathbb{C}^n)} = t^{\frac{1}{2} \dim \text{Gr}(k, n)} \chi_{\Lambda^k(\mathbb{C}^n)^*}(z_1 t^{\frac{n - 1}{2}}, \dots, z_n t^{\frac{1 - n}{2}}) \mathsf{Vertex}
\end{equation}
where $\chi_{\Lambda^k(\mathbb{C}^n)^*}$ is a character of the corresponding fundamental representation of the Langlands dual group $GL_n^\vee \simeq GL_n$. 
\end{theorem}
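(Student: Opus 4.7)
The plan is to combine the two expressions for $\mathsf{Vertex}^\mu$ already established. By Proposition \ref{vertexcomparison}, the left-hand side $\mathsf{Vertex} \cdot \widehat{U}^{(a)}_{\Lambda^k(\mathbb{C}^n)}$ equals $\mathsf{Vertex}^\mu$, and by Proposition \ref{heckeinsertion} this in turn equals the bracketed insertion $\langle z_n^{-k}\,\chi(k, \mathscr{W}_n|_0, \mathscr{V}_{n-1}|_0)\rangle$. The theorem is thus reduced to showing that this insertion collapses to a scalar multiple of $\mathsf{Vertex} = \langle 1 \rangle$, with the scalar being precisely $t^{\frac{1}{2}\dim \text{Gr}(k,n)}\chi_{\Lambda^k(\mathbb{C}^n)^*}(z_1 t^{(n-1)/2}, \dots, z_n t^{(1-n)/2})$.

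To extract this scalar, I would apply Theorem \ref{Xknmflop} iteratively along the chain $\mathscr{V}_1 \subset \mathscr{V}_2 \subset \cdots \subset \mathscr{V}_{n-1} \subset \mathscr{W}_n$. At the $i$-th step the relevant specialization is $n \to i$ and $m \to i-1$, so that $n-m = 1$ and the character $\chi_{\Lambda^{k_2}(\mathbb{C}^1)}$ contributes trivially while the explicit power of $t$ from the wall-crossing formula survives. Each flop rewrites $\chi(k^{(i)}, \mathscr{V}_i|_0, \mathscr{V}_{i-1}|_0)$ as a sum over $k_1 + k_2 = k^{(i)}$ of $\chi^\vee(k_1, \mathscr{V}_i|_0, \mathscr{V}_{i-1}|_0)$ weighted by $t^{k_2(i-1-k_1) + \frac{1}{2}k_2(1-k_2)}$. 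The essential reinterpretation is that, by the $I \leftrightarrow J$ symmetry broken only by the stability condition, $\chi^\vee$ naturally encodes Hecke modification data for the next bundle $\mathscr{V}_{i-1}$ in the chain (with an opposite-sign cocharacter), up to degree shifts which transfer part of $z_i^{-k^{(i)}}$ to $z_{i-1}^{-k_1}$. This sets up the induction step, which terminates at $i=1$ where $\mathscr{V}_1$ is a line bundle and the flop formula specialized to $m=0$ collapses to a single term.

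Carrying out the induction produces a sum indexed by sequences $k = k^{(n)} \geq k^{(n-1)} \geq \cdots \geq k^{(1)} \geq 0$ with each drop $k^{(i)} - k^{(i-1)} \in \{0,1\}$, which are in bijection with $k$-element subsets $I \subset \{1,\dots,n\}$. The accumulated prefactor for a given $I$ is a monomial in $z$ and $t$ computable directly from the explicit form of Theorem \ref{Xknmflop} together with the $z$-shifts tracked along the induction. The resulting sum matches the asymptotic computation of $\chi(\text{Gr}(k,n))$ in the chamber $|y_1| \ll \cdots \ll |y_n|$ from Section \ref{chiGr}, which via geometric Satake recovers $t^{\frac{1}{2}\dim \text{Gr}(k,n)}$ times the character of the fundamental $GL_n^\vee$-representation. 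That the dual $\Lambda^k(\mathbb{C}^n)^*$ rather than $\Lambda^k(\mathbb{C}^n)$ appears reflects the fact that the flops are applied to the stability condition in the opposite direction from how the Hecke modification was originally set up on $\mathscr{W}_n$.

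The main obstacle is the reinterpretation step at each iteration: one must verify, with careful tracking of signs, degrees, and the virtual structure sheaf contributions, that the flopped insertion $\chi^\vee(k_1, \mathscr{V}_i|_0, \mathscr{V}_{i-1}|_0)$ genuinely repackages as a Hecke-modified quasimap insertion for $\mathscr{V}_{i-1}$. This is an analog of the exact-sequence computation preceding Proposition \ref{heckeinsertion}, now performed at an internal edge of the quiver rather than at the framing, and the $z$-shift induced by the cocharacter is what ultimately produces the precise argument shift in $\chi_{\Lambda^k(\mathbb{C}^n)^*}(z_1 t^{(n-1)/2}, \dots, z_n t^{(1-n)/2})$. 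Once this identification is in place, the remaining combinatorial assembly into the Satake character is already implicit in Section \ref{chiGr}.
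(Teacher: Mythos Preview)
Your proposal is correct and follows essentially the same route as the paper's own proof: apply Theorem \ref{Xknmflop} with $m = n-1$ to the insertion of Proposition \ref{heckeinsertion}, reinterpret the flopped genus $\chi^\vee$ as a Hecke modification on the next bundle down the chain (the paper introduces the spaces $\mathsf{QM}^{(i,\mu)}(X)$ for exactly this purpose), and iterate. The only cosmetic difference is that the paper packages the induction as a two-term recursion $\mathsf{Vertex}^{(n,\mu_k)} = \mathsf{Vertex}^{(n-1,\mu_k)} + z_n^{-1}t^{n-k}\mathsf{Vertex}^{(n-1,\mu_{k-1})}$ and matches it against an identical recursion for the shifted characters, whereas you unfold this recursion directly into a sum over monotone $\{0,1\}$-step sequences identified with $k$-subsets; these are the same computation organized two ways.
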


\begin{proof}
The proof is inductive in the rank $n$. From Theorem \ref{Xknmflop} for $m = n - 1$, we have an equality of localized $K$-theory classes
\begin{equation} \label{theoremapp}
    \chi\Big(k, \mathscr{W}_n \eval_0, \mathscr{V}_{n - 1} \eval_0 \Big) = \chi^\vee \Big(k, \mathscr{W}_n \eval_0, \mathscr{V}_{n - 1} \eval_0 \Big) + t^{n - k} \chi^\vee\Big(k - 1, \mathscr{W}_n \eval_0, \mathscr{V}_{n - 1} \eval_0 \Big)
\end{equation}
where $\chi^\vee$ denotes the class associated to the equivariant genus of $X^\vee$ in an analogous fashion. 

Now observe that, under $\langle \text{---} \rangle$, an insertion of $\chi^\vee(k, \mathscr{W}_n \eval_0, \mathscr{V}_{n - 1} \eval_0)$ may be reinterpreted as a certain twisted quasimap count, by an argument identical to the one leading to propostion \ref{heckeinsertion}. The count is defined by the moduli space 
\begin{equation}
    \{ \text{bundles $\mathscr{V}_i$, Hecke modification $0 \to \mathscr{V}^{-\mu}_{n - 1} \to \mathscr{V}_{n - 1} \to \mathscr{O}_0 \otimes \mathscr{E} \to 0$, stable section $f$} \}^\circ/ \sim
\end{equation}
where $f \in H^0(C, \mathscr{H}om(\mathscr{V}_1, \mathscr{V}_2) \oplus \dots \oplus \mathscr{H}om(\mathscr{V}_{n - 2}, \mathscr{V}_{n - 1}) \oplus \mathscr{H}om(\mathscr{V}^{-\mu}_{n - 1}, \mathscr{W}_n))$ , $-\mu$ is the opposite coweight of $\mu$ understood as the coweight of $GL_{n - 1}$ corresponding to $\Lambda^k(\mathbb{C}^{n - 1})$, and stability means that $f$ is generically injective as before. The superscript $\circ$ means quasimaps nonsingular at infinity, as usual. 

Because $\mathscr{V}_{n - 1}$ already varies in moduli, we are free to exchange the role of $\mathscr{V}_{n - 1}$ and $\mathscr{V}_{n - 1}^{-\mu}$ above. Thus, the count with $\chi^\vee$ inserted is a special instance of a count of the following data:
\begin{equation} \label{QMtwistati}
    \mathsf{QM}^{(i, \mu)}(X) := \{ \text{bundles $\mathscr{V}_j$, Hecke modification $0 \to \mathscr{V}_i \to \mathscr{V}^\mu_i \to \mathscr{O}_0 \otimes \mathscr{E} \to 0$, stable section $f$} \}/\sim
\end{equation}
where $f$ is now a global section of $\mathscr{H}om(\mathscr{V}_1, \mathscr{V}_2) \oplus \dots \oplus \mathscr{H}om(\mathscr{V}_{i - 1}, \mathscr{V}^\mu_i) \oplus \dots \oplus \mathscr{H}om(\mathscr{V}_{n - 1}, \mathscr{W}_n)$. Write $\mathsf{Vertex}^{(i, \mu_i)}$ for the normalized count of quasimaps nonsingular at $\infty \in C$ as in \eqref{normalizedvertex}. It is understood that we interpret $z^{\deg f}$ in the vertex function as $\prod_{j = 1}^n z_j^{-(\deg \mathscr{V}_j - \deg \mathscr{V}_{j - 1} + |\mu| \delta_{ij})}$. Taking into account the $z$-shifts, \eqref{theoremapp} under $\langle \text{---} \rangle$ may be rephrased as the equality of counts
\begin{equation} \label{vertexrecursion}
    \mathsf{Vertex}^{(n, \mu_k)} = \mathsf{Vertex}^{(n - 1, \mu_k)} + z_n^{-1} t^{n - k} \mathsf{Vertex}^{(n - 1, \mu_{k - 1})}. 
\end{equation}
On the other hand, by the same argument as in Proposition \ref{heckeinsertion}, we have 
\begin{equation} 
    \mathsf{Vertex}^{(n - 1, \mu_k)} = z_{n - 1}^{-k} \Big\langle \chi\Big( k, \mathscr{V}_{n - 1} \eval_0, \mathscr{V}_{n - 2} \eval_0 \Big) \Big\rangle. 
\end{equation}
Therefore, we may apply Theorem \ref{Xknmflop} in this fashion iteratively until we reach $n = 0$, in which case there is no Hecke modification at all and the count reduces to $\mathsf{Vertex}$ itself. It follows that $\mathsf{Vertex}^{(i, \mu_k)}$ is a scalar multiple of $\mathsf{Vertex}$ (for each $i$ and $k$), where the scalar depends on $z_1, \dots, z_i$ and $t$. 

To determine the scalar multiples, note that the characters satisfy the recursion 
\begin{equation} \label{characterrecursion}
\begin{split}
    t^{\frac{1}{2} \dim \text{Gr}(k, n)} \chi_{\Lambda^k(\mathbb{C}^n)^*}(z_i t^{\frac{n + 1 - 2i}{2}}) & = t^{k/2} \times t^{-k/2} \times t^{\frac{1}{2} \dim \text{Gr}(k, n - 1)} \chi_{\Lambda^k(\mathbb{C}^{n - 1})^*}(z_i t^{\frac{n - 2i}{2}}) \\ & + t^{(n - k)/2} \times z_n^{-1} t^{(n - 1)/2} \times t^{-(k - 1)/2} \times t^{\frac{1}{2} \dim \text{Gr}(k - 1, n - 1)} \chi_{\Lambda^{k - 1}(\mathbb{C}^{n - 1})^*}(z_i t^{\frac{n - 2i}{2}}) \\
    & = t^{\frac{1}{2} \dim \text{Gr}(k, n - 1)} \chi_{\Lambda^k(\mathbb{C}^{n - 1})^*}(z_i t^{\frac{n - 2i}{2}}) + z_n^{-1} t^{n - k} \times t^{\frac{1}{2} \dim \text{Gr}(k - 1, n - 1)} \chi_{\Lambda^{k - 1}(\mathbb{C}^{n - 1})^*}(z_i t^{\frac{n - 2i}{2}}). 
\end{split}
\end{equation}
This follows from the restriction formula $\Lambda^k(\mathbb{C}^n) = \Lambda^k(\mathbb{C}^{n - 1}) \oplus \Lambda^{k - 1}(\mathbb{C}^{n - 1})$, where both sides are understood as $GL_{n - 1} \subset GL_n$ representations, the fact that the characters are homogeneous of degree $-k$ in the $z$-variables, and trivial properties of $t^{\frac{1}{2} \dim \text{Gr}(k, n)} = t^{k(n - k)/2}$. Moreover, this recursion uniquely determines the characters as functions of $(z_1, \dots, z_n, t)$. Comparing \eqref{characterrecursion} and \eqref{vertexrecursion}, the theorem immediately follows by induction and an application of Proposition \ref{vertexcomparison}. 
\end{proof}

\section{Cohomological computations: quantized multiplication morphisms} \label{multmaps}
As our considerations are primarily geometric, they work equally well in equivariant cohomology or equivariant $K$-theory. However, there is a certain strengthening of Theorem \ref{heckeeigenval} available after passing to the $t \to 0$ and cohomological limit, which makes contact with the theory of multiplication morphisms for generalized affine Grassmannian slices introduced in \cite{bfnslice} and developed in \cite{finkelberg2017}, \cite{krylovperunov}.

\subsection{Quantized Coulomb branch algebra} \label{quantizedMc}
In this section we will introduce the quantized Coulomb branch algebra of pure gauge theory, and recall certain key features of it that will be relevant for our analysis of the cohomological vertex functions. Standard references on Coulomb branches and details on everything we say below are \cite{bfn}, \cite{bfnslice}. 

\subsubsection{}
From the definition \eqref{affinegrass} of the affine Grassmannian, it is clear that the group $\mathbb{C}^\times_\varepsilon \ltimes G(\mathscr{O})$ acts on it naturally by loop rotations and left multiplication. The equivariant Borel-Moore homology (with $\mathbb{C}$ coefficients for us, always)
\begin{equation}
    \widehat{\mathscr{M}}_C := H_*^{\mathbb{C}^\times_\varepsilon \ltimes G(\mathscr{O})}(\text{Gr}_G)
\end{equation}
is well-defined as an infinite-dimensional graded vector space, as is evident using the stratification of $\text{Gr}_G$ by the finite-dimensional $G(\mathscr{O})$-orbits. The convolution diagram of the affine Grassmannian gives it the structure of a noncommutative algebra as observed in \cite{bfm}, \cite{bfn} which is referred to as the quantized Coulomb branch algebra of pure gauge theory of type $G$. For us the reductive group is always $G = GL_n$ for some $n$. At the specialization $\varepsilon = 0$ of the equivariant parameter for the loop rotation, the algebra becomes commutative, and its spectrum 
\begin{equation}
    \mathscr{M}_C := \text{Spec} \, H_*^{G(\mathscr{O})}(\text{Gr}_G)
\end{equation}
is in fact a smooth symplectic variety of dimension $2n$. This variety is called the Coulomb branch of pure gauge theory with gauge group $G$, and the algebra $\widehat{\mathscr{M}}_C$ is a quantization of its algebra of global functions. Letting $\mathfrak{a} \subset \text{Lie} \, G$ denote a Cartan subalgebra, there is an obvious commutative subalgebra 
\begin{equation}
    \mathbb{C}[\mathfrak{a}]^W = H^\bullet_G(\text{pt)} \subset \widehat{\mathscr{M}}_C
\end{equation}
meaning that the representation theory of $\widehat{\mathscr{M}}_C$ is closely related to the study of quantum integrable systems. Our goal in this section is to introduce a new perspective on this relationship using vertex functions and nonabelian shift operators. 

\subsubsection{}
To understand $\widehat{\mathscr{M}}_C$, a productive point of view is to use equivariant localization by a maximal torus $\mathbb{C}^\times_\varepsilon \times A \subset \mathbb{C}^\times_\varepsilon \ltimes G(\mathscr{O})$. The fixed locus $\text{Gr}_G^{\mathbb{C}^\times_\varepsilon \times A} = \text{Gr}_A$, so the localization theorem gives rise to an injective homomorphism of convolution algebras 
\begin{equation}
    H_*^{\mathbb{C}^\times_\varepsilon \ltimes G(\mathscr{O})}(\text{Gr}_G) \xhookrightarrow{} H_*^{\mathbb{C}^\times_\varepsilon \times A}(\text{Gr}_A)^W_{\text{loc}} 
\end{equation}
where the superscript in the target denotes the subring of Weyl group invariants and the subscript denotes the need to localize at certain hyperplanes in the equivariant parameters. It will be technically convenient for us to localize at the generic point of equivariant parameters, at which point the target becomes isomorphic to the ring of symmetric meromorphic difference operators on $\mathfrak{a} = \text{Lie} \, A$. See Appendix A of \cite{bfnslice} for a very explicit discussion of this embedding. 

\subsubsection{}
It is possible to give a reasonably explicit set of generators for $\widehat{\mathscr{M}}_C$ as follows. Let $\mu^+ = (1, \dots, 0)$ and $\mu^- = (0, \dots, -1)$ be the fundamental and antifundamental minuscule dominant cocharacters for $GL_n$. The associated $G(\mathscr{O})$ orbits $\overline{\text{Gr}}^{\mu^\pm}_G = \text{Gr}^{\mu^\pm}_G = G(\mathscr{O}) z^{\mu^\pm}G(\mathscr{O}) \subset \text{Gr}_G$ are closed and isomorphic to projective spaces of dimension $n - 1$, and carry associated tautological quotient bundles $Q_\pm$ of rank $n - 1$. The characteristic classes of these bundles give rise to well-defined homology classes which we assemble into generating functions via the Chern polynomials
\begin{equation} \label{U+-ops}
\begin{split}
    \widehat{U}^+(x) & := [\overline{\text{Gr}}^{\mu^+}_{G}] \cap c_x(Q_+) \in H_*^{\mathbb{C}^\times_\varepsilon \ltimes G(\mathscr{O})}(\text{Gr}_G)[x] \\
    \widehat{U}^-(x) & := -[\overline{\text{Gr}}^{\mu^-}_G] \cap c_{-x}(Q_-) \in H_*^{\mathbb{C}^\times_\varepsilon \ltimes G(\mathscr{O})}(\text{Gr}_G)[x]
\end{split}
\end{equation}
The overall sign is for later convenience. If $(a_1, \dots, a_n)$ denote the equivariant parameters for the group $G$, we also have the generating function of the symmetric polynomials 
\begin{equation}
    Q(x) := \prod_i(x - a_i) \in H^\bullet_G(\text{pt})[x] \subset H_*^{\mathbb{C}^\times_\varepsilon \ltimes G(\mathscr{O})}(\text{Gr}_G)[x].
\end{equation}
Note that $\deg_x Q(x) = n$, $\deg_x \widehat{U}^\pm(x) = n - 1$, and $Q(x)$ is monic. 

\subsubsection{}
We have the following 
\begin{prop} \label{mcrel}
There exists a unique class $\widetilde{Q}(x) \in H_*^{\mathbb{C}^\times_\varepsilon \ltimes G(\mathscr{O})}(\text{Gr}_G)[x]$, $\deg_x \widetilde{Q}(x) = n - 2$, such that 
\begin{equation} \label{qdetrel}
    \widetilde{Q}(x  + \varepsilon/2) Q(x -\varepsilon/2) - \widehat{U}^+(x + \varepsilon/2) \widehat{U}^-(x - \varepsilon/2) = 1. 
\end{equation}
\end{prop}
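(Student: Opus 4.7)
The plan is to verify \eqref{qdetrel} through the faithful embedding of $\widehat{\mathscr{M}}_C$ into symmetric meromorphic difference operators on $\mathfrak{a}$ furnished by Cartan localization. Under this embedding, $Q(x)$ becomes multiplication by $\prod_i(x - a_i)$, while $\widehat{U}^\pm(x)$ become explicit sums of shift operators. Indeed, since $\overline{\text{Gr}}^{\mu^\pm}_G \simeq \mathbb{P}^{n-1}$ is smooth and proper with $n$ torus-fixed points indexed by $i = 1, \dots, n$, Atiyah--Bott localization applied to \eqref{U+-ops} yields formulas of the shape
\begin{equation}
\widehat{U}^\pm(x) = \sum_{i=1}^n P_i^\pm(x, a)\, e^{\pm \varepsilon \partial_{a_i}},
\end{equation}
where each $P_i^\pm$ is a polynomial of degree $n - 1$ in $x$ divided by $\prod_{j \ne i}(a_i - a_j)$, obtained by restricting $c_{\pm x}(Q_\pm)$ to the $i$-th fixed point.

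With these formulas in hand, the product $\widehat{U}^+(x + \varepsilon/2)\,\widehat{U}^-(x - \varepsilon/2)$ expands as a double sum over ordered pairs $(i,j)$ of fixed points. Because $\widehat{U}^+$ and $\widehat{U}^-$ carry opposite shifts, the diagonal pairs $i = j$ contribute pure multiplication operators, while the off-diagonal pairs $i \neq j$ produce nontrivial shifts. The diagonal sum is evaluated by partial fractions: the $\varepsilon/2$ offsets are arranged so that at $x = a_i + \varepsilon/2$ the $i$-th term reduces to $-1$ while all other terms vanish, and a Lagrange interpolation identity then identifies the diagonal sum with $R_0(x + \varepsilon/2)\, Q(x - \varepsilon/2) - 1$ for an explicit polynomial $R_0$ of degree $n - 2$ in $x$. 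The off-diagonal contributions organize themselves, after symmetrization in $(i,j)$, into expressions proportional to $Q(x - \varepsilon/2)$ with polynomial prefactors of degree $n - 2$ in $x$. Adding these pieces produces a polynomial $\widetilde{Q}(x)$ of degree $n - 2$ satisfying \eqref{qdetrel}, establishing existence. Uniqueness is automatic since $Q(x - \varepsilon/2)$ is monic of degree $n$, so a polynomial $\widetilde{Q}(x + \varepsilon/2)$ of degree at most $n - 2$ satisfying \eqref{qdetrel} is uniquely determined by polynomial division.

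The main obstacle will be establishing \emph{integrality}: that $\widetilde{Q}(x)$ lies in the non-localized algebra $H_*^{\mathbb{C}^\times_\varepsilon \ltimes G(\mathscr{O})}(\text{Gr}_G)[x]$, not merely in the localization on the right of the embedding. The rational coefficients produced by Cartan localization have apparent simple poles along walls $a_i = a_j$, and one must verify these cancel once the full symmetric sum is assembled. The cleanest route is geometric: the convolution $\widehat{U}^+ \cdot \widehat{U}^-$ is computed by pushforward along the convolution map $\overline{\text{Gr}}^{\mu^+} \,\widetilde{\times}\, \overline{\text{Gr}}^{\mu^-} \to \text{Gr}_G$, whose image is supported on $\overline{\text{Gr}}^{\mu^+ + \mu^-} \cup \{e\}$. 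By geometric Satake this reflects the decomposition $V(\mu^+) \otimes V(\mu^-) = \mathfrak{gl}_n = \mathfrak{sl}_n \oplus \mathbb{C}$: the trivial summand $\mathbb{C}$ produces the constant $1$ appearing in \eqref{qdetrel}, coming from the fundamental class of the identity coset $\{e\} \subset \text{Gr}_G$, while the adjoint summand $\mathfrak{sl}_n$ contributes a genuine Borel-Moore class supported on $\overline{\text{Gr}}^{\mu^+ + \mu^-}$ whose characteristic class data assembles into $\widetilde{Q}(x + \varepsilon/2)\, Q(x - \varepsilon/2)$. This geometric interpretation manifestly exhibits $\widetilde{Q}$ as an integral class and explains the appearance of $1$ on the right-hand side of \eqref{qdetrel}.
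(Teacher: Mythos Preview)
Your localized computation matches the paper's: both write $\widehat{U}^\pm(x)$ as explicit sums of shift operators, expand the product as a double sum, and verify that $\widehat{U}^+(x+\varepsilon/2)\widehat{U}^-(x-\varepsilon/2)+1$ vanishes at $x=a_\ell+\varepsilon/2$ for every $\ell$ (diagonal terms give $-1$, off-diagonal terms vanish individually). Polynomial division then produces a unique $\widetilde{Q}(x)$ in the \emph{localized} algebra. So far so good.

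The gap is in your integrality argument. You observe correctly that the convolution $\widehat{U}^+\widehat{U}^-$ is an honest Borel--Moore class supported on $\overline{\text{Gr}}^{\mu^++\mu^-}\cup\{e\}$, and that the contribution over $\{e\}$ is the constant $-1$. But the assertion that the remaining piece ``manifestly exhibits $\widetilde{Q}$ as an integral class'' is exactly what is at stake: you know only that the product $\widetilde{Q}(x+\varepsilon/2)\,Q(x-\varepsilon/2)$ is integral, and integrality of a product does not imply integrality of a factor. Dividing by $Q$ in the localized algebra could in principle reintroduce poles along the walls $a_i=a_j$, and nothing in the Satake decomposition rules this out. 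Your geometric picture explains \emph{why} the constant $1$ appears, but it does not furnish the factorization with an integral $\widetilde{Q}$.

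The paper closes this gap with a purely algebraic trick that bypasses geometry entirely. Since $Q(x-\varepsilon/2)$ is monic in $x$, its inverse makes sense in $H_*^{\mathbb{C}^\times_\varepsilon\ltimes G(\mathscr{O})}(\text{Gr}_G)[x,x^{-1}]\!]$ by expansion at $x\to\infty$. Thus
\[
\bigl(\widehat{U}^+(x+\varepsilon/2)\widehat{U}^-(x-\varepsilon/2)+1\bigr)\,Q(x-\varepsilon/2)^{-1}
\]
lies in the non-localized algebra tensored with formal Laurent series in $x$, because every factor does. On the other hand, from the localized computation you already know this equals the polynomial $\widetilde{Q}(x+\varepsilon/2)$. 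A formal Laurent series in $x$ with coefficients in the non-localized algebra which happens to be a polynomial has, in particular, coefficients in the non-localized algebra. This is the step your argument is missing.
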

In view of geometric Satake this should be thought of as analogous to the decomposition of the tensor product $V \otimes V^*$, viewed as a $GL(V)$-representation, into the trivial and adjoint representations. 

\begin{proof}
Under equivariant localization $\widehat{\mathscr{M}}_C \xhookrightarrow{} H_*
^{\mathbb{C}^\times_\varepsilon \times A}(\text{Gr}_A)^W_{\text{loc}}$, $\widehat{U}^\pm(x)$ map to the difference operators (which by abuse of notation we refer to also as $\widehat{U}^\pm$)
\begin{equation}
\widehat{U}^\pm(x) = \pm\sum_{i = 1}^n e^{\pm \varepsilon \frac{\partial}{\partial a_i}} \prod_{j (\neq i)} \frac{x - a_j}{a_i - a_j}.
\end{equation}
In our conventions, these act on functions from the right. We compute 
\begin{equation}
\begin{split}
    \widehat{U}^+(x)\widehat{U}^-(x - \varepsilon) & =  - \sum_{i, i'} e^{\varepsilon(\partial_i - \partial_{i'})} \prod_{j (\neq i)} \frac{a_j + \varepsilon \delta_{i'j} - x}{a_j - a_i + \varepsilon \delta_{i'j} - \varepsilon \delta_{ii'}} \prod_{j' (\neq i')} \frac{x - a_{j'} - \varepsilon}{a_{i'} - a_{j'}} \\
    & = -\sum_i \prod_{j (\neq i)} \frac{a_j - x}{a_j - a_i - \varepsilon} \prod_{j' (\neq i)} \frac{x - a_{j'} - \varepsilon}{a_i - a_{j'}} - \sum_{i \neq i'} e^{\varepsilon(\partial_i - \partial_{i'})} \prod_{j (\neq i)} \frac{a_j + \varepsilon \delta_{i' j} - x }{a_j - a_i + \varepsilon\delta_{i'j}} \prod_{j' (\neq i')} \frac{x - a_{j'} - \varepsilon}{a_{i'} - a_{j'}}. 
\end{split}
\end{equation}
When $x = a_\ell + \varepsilon$ for some $\ell$, only the $i = \ell$ term survives in the first sum, which contributes $-1$. Every term in the second sum turns out to vanish: if $i' \neq \ell$, then the product over $j'$ vanishes from the $j' = \ell$ term, and if $i' = \ell$ then the product over $j$ vanishes from the $j = \ell$ term. 

The right hand side of this equation is a difference operator, and the coefficients of the basic shift operators in $a_i$ are polynomials in $x$ with coefficients in the field of fractions of $H^\bullet_A(\text{pt})$. Applying the polynomial division algorithm (which is indeed applicable since we have coefficients in a field) to the coefficients of $e^{\varepsilon(\partial_i - \partial_{i'})}$, we conclude the existence of a unique meromorphic difference operator $\widetilde{Q}(x) \in H_*^{\mathbb{C}^\times_\varepsilon \times A}(\text{Gr}_A)^W_{\text{loc}}$ of degree $n - 2$ in $x$ such that 
\begin{equation}
    \widehat{U}^+(x) \widehat{U}^-(x - \varepsilon) = -1 + \widetilde{Q}(x) Q(x - \varepsilon). 
\end{equation}
All that remains is to show that $\widetilde{Q}(x)$ is in fact the image of a (necessarily unique) non-localized equivariant homology class under localization. Indeed, note that since $Q(x - \varepsilon)$ is monic, it has a multiplicative inverse $Q(x - \varepsilon)^{-1} \in H_*^{\mathbb{C}^\times_\varepsilon \ltimes G(\mathscr{O})}(\text{Gr}_G)[x, x^{-1} ]\!]$ obtained by expansion at $x \to \infty$. Then we just observe that
\begin{equation}
    (\widehat{U}^+(x) \widehat{U}^-(x - \varepsilon) + 1)Q(x - \varepsilon)^{-1} = \widetilde{Q}(x) \in H_*^{\mathbb{C}^\times_\varepsilon \times A}(\text{Gr}_A)^{W}_{\text{loc}}[x, x^{-1} ]\!]
\end{equation}
but the coefficients of powers of $x$ in the left hand side are the images under equivariant localization of classes in $H_*^{\mathbb{C}^\times_\varepsilon \ltimes G(\mathscr{O})}(\text{Gr}_G)$, while the right hand side is manifestly a polynomial in $x$. The only way this is possible is if both sides are in fact in the image of the injection $H_*^{\mathbb{C}^\times_\varepsilon \ltimes G(\mathscr{O})}(\text{Gr}_G)[x] \xhookrightarrow{} H_*^{\mathbb{C}^\times_\varepsilon \times A}(\text{Gr}_A)^W_{\text{loc}}[x]$. Shifting $x \mapsto x + \varepsilon/2$ gives the proposition as stated.
\end{proof}

The point of introducing the class $\widetilde{Q}(x)$ is the following 
\begin{prop} \label{Mcgen}
    The coefficients of $Q(x)$, $\widehat{U}^\pm(x)$, $\widetilde{Q}(x)$ generate the ring $H_*^{\mathbb{C}^\times_\varepsilon \ltimes G(\mathscr{O})}(\text{Gr}_G)$ over $\mathbb{C}[\varepsilon]$. 
\end{prop}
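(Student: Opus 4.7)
The plan is to pass to equivariant localization, interpret the generators as explicit symmetric meromorphic difference operators, and prove the statement by induction on the dominance order of cocharacters, using the geometric Satake equivalence to see that convolution products of the minuscule classes $\widehat{U}^\pm(x)$ exhaust all support strata.

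The first step is to invoke the embedding $\widehat{\mathscr{M}}_C \hookrightarrow H_*^{\mathbb{C}^\times_\varepsilon \times A}(\text{Gr}_A)^W_{\text{loc}}$ recalled before Proposition \ref{mcrel}, reducing the problem to the identification of subrings of symmetric meromorphic difference operators on $\mathfrak{a}$. Equip $\widehat{\mathscr{M}}_C$ with its cocharacter grading, given by the $A$-support of a class (equivalently, the lattice shift appearing in its localized image). With respect to this grading, $Q(x)$ and $\widetilde{Q}(x)$ lie in degree $0$, while $\widehat{U}^{\pm}(x)$ are supported in the Weyl orbit of $\pm \omega_1$. The coefficients of $Q(x)$ already generate the scalar subring $H^\bullet_G(\text{pt}) = \mathbb{C}[a_1, \ldots, a_n]^{S_n}$.

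The inductive step, applied to dominant cocharacters $\mu \in \Lambda^+_{\text{cochar}}$, uses geometric Satake to identify the convolution product $\widehat{U}^+(x_1) \cdots \widehat{U}^+(x_p) \cdot \widehat{U}^-(y_1) \cdots \widehat{U}^-(y_q)$ with a tensor product $V(\omega_1)^{\otimes p} \otimes (V(\omega_1)^\vee)^{\otimes q}$ of representations of $GL_n^\vee$. The top-dominance summand is supported on $\overline{\text{Gr}}^\mu_G$ for $\mu$ determined by $p,q$, and all other summands are supported on strictly smaller orbit closures, hence already lie in the subalgebra by inductive hypothesis. Varying the spectral parameters $x_i, y_j$, together with the Chern polynomials $c_x(Q_\pm)$ already built into the definition of $\widehat{U}^\pm(x)$, lets one fill in arbitrary cohomological decorations of $[\overline{\text{Gr}}^\mu_G]$ on the top-dominance stratum. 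The class $\widetilde{Q}(x)$ is needed only to supply the weight-zero classes beyond scalars, and this is exactly what the relation \eqref{qdetrel} does: it expresses $\widetilde{Q}$ in terms of the convolution product of the minuscule generators divided by $Q$, picking out the part of the degree-zero component not already in $H^\bullet_G(\text{pt})$.

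The main obstacle will be verifying the rank matching at each $\mu$: the generated subalgebra's degree-$\mu$ component, viewed as a module over $H^\bullet_G(\text{pt})$, must have the same rank as $H_*^{G(\mathscr{O})}(\overline{\text{Gr}}^\mu_G)$, which by Satake equals $\dim V(\mu^\vee)$. In practice this is controlled by computing leading terms of the corresponding localized difference operators, whose top $x$-coefficients are (up to explicit factors) Weyl characters of fundamental representations, in particular nonvanishing. Once the rank matching is in place, the inductive surjectivity onto each graded piece upgrades to an equality and the proof is complete.
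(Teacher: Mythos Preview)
Your induction on the dominance order has a real gap. For the step at $\mu$ you propose to take a monomial $\widehat{U}^+(x_1)\cdots\widehat{U}^+(x_p)\,\widehat{U}^-(y_1)\cdots\widehat{U}^-(y_q)$ whose top support is $\overline{\text{Gr}}^\mu_G$, with $\mu$ ``determined by $p,q$''. But the top support of such a monomial is always $(p,0,\ldots,0,-q)$. For any other dominant $\mu$, say $\mu=(1,1,0,\ldots,0)$ with $n\ge 3$, there is \emph{no} monomial in $\widehat{U}^\pm$ with top support exactly $\overline{\text{Gr}}^\mu_G$; such $\mu$ only ever appears as a lower stratum. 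Your appeal to the inductive hypothesis for those lower strata is then circular: you never provide the step that establishes generation at those intermediate $\mu$. The ``rank matching'' you flag as the main obstacle is in fact the entire content of the proposition, and the sketch does not supply it. (There is also a small error: $\widetilde{Q}(x)$ is not supported at $\mu=0$; from \eqref{qdetrel} its support is the closure of the adjoint stratum $(1,0,\ldots,0,-1)$.)

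The paper's proof is completely different and much shorter: since $\widehat{\mathscr{M}}_C$ is graded with $\varepsilon$ in positive degree, generation over $\mathbb{C}[\varepsilon]$ can be checked after setting $\varepsilon=0$, and the commutative case is then Theorem~3.1 of \cite{bfnslice}. A corrected version of your approach would essentially be re-proving that theorem; one clean way to do it is to replace the dominance-order induction by the filtration of $\widehat{\mathscr{M}}_C$ by $\sum_i|\mu_i|$, so that on the associated graded the question becomes the classical fact that tensor powers of $V(\omega_1)$ and $V(\omega_1)^\vee$ contain every irreducible $GL_n^\vee$-representation, together with an honest linear-independence check for the spectral-parameter coefficients on each convolution variety.
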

\begin{proof}
It suffices to show this at $\varepsilon = 0$, in which case it follows from Theorem 3.1 of \cite{bfnslice}. 
\end{proof}
We remark as a corollary of the proof of Proposition \ref{mcrel}, $\widetilde{Q}(x)$ is in fact uniquely determined through the relation \eqref{qdetrel}. In particular, any algebra homomorphism $\widehat{\mathscr{M}}_C \to \mathcal{A}$ is determined by the image of the coefficients of $\widehat{U}^\pm(x)$ and $Q(x)$.

\subsection{Vertex function and quantized multiplication maps} \label{vertexmult}
The purpose of this section will be to study the interaction of the cohomological vertex function of $X = GL_n/B$, which we continue to denote by $\mathsf{Vertex}(z, a)$, with the Coulomb branch algebra $\widehat{\mathscr{M}}_C$. By design, the algebra ``acts'' naturally on the vertex function via nonabelian shift operators. The goal of this section is to prove Theorem \ref{vertexint}, which is a more precise and refined version of this statement. 
\subsubsection{}
For convenience, we first remind the reader of some standard facts about the limits we consider. It is obvious that, when $t \to 0$, the equivariant Hirzebruch genus $\chi(X)$ simply computes the Euler character of the structure sheaf $\mathscr{O}_X$ of regular functions on $X$. The cohomological limit, from the standpoint of localization formulas, just means replacing 
\begin{equation}
    \prod_w \frac{1}{1 - w(a)^{-1}} \longrightarrow \prod_w \frac{1}{\text{weight}(\mathfrak{a})}.
\end{equation}
Note the $K$-theoretic weights $w(a)$ are functions on the torus $A \ni a$, while the cohomological weights $\text{weight}(\mathfrak{a})$ are functions on the Lie algebra $\text{Lie} \, A \ni \mathfrak{a}$. The limit may be understood as $a = e^{\beta \mathfrak{a}}$, $\beta \to 0^+$ and discarding an overall power of $\beta$. In summary, we have 
\begin{equation}
\chi(X) \xrightarrow[]{\, t \to 0 \, } \chi(X, \mathscr{O}_X) \xrightarrow[]{\, K \to H^\bullet \, } \int_X 1. 
\end{equation}
Note if $X$ is proper the integral of $1$ is in fact zero on degree grounds. When $X$ is not proper but has proper fixed locus under the torus action, the integral is defined by equivariant residue and takes values in localized equivariant cohomology $H^\bullet_A(\text{pt})_{\text{loc}}$. 

We will also frequently make use of Chern polynomials in this section, so we remind the reader of the notation: for a $G$-equivariant vector bundle $\mathscr{E}$ of rank $n$, 
\begin{equation}
    c_x(\mathscr{E}) := \sum_{i = 0}^n x^{n - i} c_i(\mathscr{E}) \in H^\bullet_G(X)[x].
\end{equation}

\subsubsection{}
While all the really important computations in this paper are for $X$ finite-dimensional, it is worth commenting on the only kind of infinite-dimensional situation we consider, namely when $X = \mathsf{Maps}(\mathbb{C}_q \to \mathbb{C}_x)$. In this case the procedure described above (with $q = e^{\beta \varepsilon}$) produces the divergent infinite product 
\begin{equation}
    \frac{1}{\varphi_q(x)} \to \prod_{n = 0}^\infty \frac{1}{-x -n\varepsilon} \xrightarrow[]{\zeta-\text{regularize}} \frac{1}{\sqrt{-2 \pi \varepsilon}} \times (-\varepsilon)^{x/\varepsilon} \Gamma\Big(\frac{x}{\varepsilon} \Big) := \Gamma_\varepsilon(x)
\end{equation}
where in the final arrow we have understood the product by $\zeta$-function regularization ($\Gamma(x)$ denotes the gamma function satisfying $\Gamma(x + 1) = x\Gamma(x)$). Such regularization is sensible because it preserves the difference equations we are interested in studying: we have $\Gamma_\varepsilon(x + \varepsilon) = -x \Gamma_\varepsilon(x)$, in direct parallel to the $q$-difference equation satisfied by $\varphi_q(x)$. 

Actual enumerative computations in Section \ref{kthdiff} are performed on finite-dimensional moduli spaces, so there is no ambiguity in taking their limit to cohomology, but we have seen the $\varphi_q$-functions enter as overall prefactors in formulas like \eqref{normalizedvertex}. We follow the convention established in this section and replace such factors by $\Gamma_\varepsilon(x)$ in the cohomological limit, compare to \cite{Iritani_2009}.

\subsubsection{}
In the $t \to 0$ and cohomological limit, the normalized vertex function \eqref{normalizedvertex} (for no shift, $\mu = 0$) passes over to 
\begin{equation}
    \mathsf{Vertex}(z, a) = \int_{\mathsf{QM}^\circ(X)} \text{ev}_\infty^*\Big(z^{\frac{c_1(V)}{\varepsilon}} \times \Gamma_\varepsilon(TX + \varepsilon) \Big) z^{\deg f}.
\end{equation}
Note that, for quasimaps to $X = GL_n/B$, the virtual fundamental class coincides with the usual fundamental class. In this equation, 
\begin{equation}
    z^{\frac{c_1(V)}{\varepsilon}} := \prod_{i = 1}^n z_i^{-\frac{c_1(V_i) - c_1(V_{i - 1})}{\varepsilon}}
\end{equation}
and the analytic $\Gamma_\varepsilon$-class may be understood as a genus via 
\begin{equation}
    \Gamma_\varepsilon(TX + \varepsilon) = \frac{1}{\sqrt{-2\pi \varepsilon}} \times \frac{\varepsilon}{\text{eu}(TX + \varepsilon)} \times \exp\Big( \frac{c_1(X) + \varepsilon\dim X }{\varepsilon}(\log (-\varepsilon) - \gamma) + \sum_{k = 2}^\infty \frac{(-1)^k}{k \varepsilon^k} \zeta(k) \text{ch}_k(TX + \varepsilon) \Big)
\end{equation}
where $\text{ch}_k$ is the degree $2k$ part of the Chern character and $\varepsilon$ is a trivial line bundle of weight $1$ under $\mathbb{C}^\times_\varepsilon$. 

\subsubsection{}
It will be very convenient to introduce the following generalizations of the vertex function. Consider the moduli spaces $\mathsf{QM}^{\circ, (i, \mu^\pm_i)}(X)$ as in \eqref{QMtwistati}, where $i \in \{1, \dots, n \}$ and $\mu^\pm_i$ is a fundamental or antifundamental cocharacter of $GL_i$ as in Section \ref{quantizedMc}. As usual, the superscript $\circ$ denotes quasimaps nonsingular at infinity. The moduli spaces have canonical maps 
\begin{equation}
    \mathsf{QM}^{\circ, (i, \mu^\pm_i)}(X) \xrightarrow{\, \, \, \pi_i \, \, \,} \overline{\text{Gr}}^{\mu^\pm_i}_{GL_i}. 
\end{equation}
We recall that the target has the rank $i - 1$ quotient bundle $Q_\pm^{(i)}$. Inspired by \eqref{U+-ops}, define
\begin{equation}
    \langle U^\pm_i(x) \rangle := \pm\int_{\mathsf{QM}^{\circ, (i, \mu^\pm_i)}(X)} \text{ev}_\infty^*\Big(z^{\frac{c_1(V)}{\varepsilon}} \times \Gamma_\varepsilon(TX + \varepsilon) \Big) \cup \pi_i^*(c_{\pm x}(Q_\pm^{(i)})) z^{\deg f}
\end{equation}
and likewise 
\begin{equation}
    \langle Q_i(x)\rangle := \int_{\mathsf{QM}^\circ(X)} \text{ev}_\infty^*\Big(z^{\frac{c_1(V)}{\varepsilon}} \times \Gamma_\varepsilon(TX + \varepsilon) \Big) \cup c_x \Big( \mathscr{V}_i \eval_0 \Big) z^{\deg f}
\end{equation}
with $\mathscr{V}_i$ one of the tautological bundles over $\mathsf{QM}(X) \times C$. We will use a similar notation $\langle f(\mathscr{V}) \rangle$ for descendent insertions of general Schur functors of tautological bundles restricted at $0 \in C$, in the terminology of \cite{okounkovpcmi}.

We may also consider moduli spaces of quasimaps to $X$ with successive Hecke modifications of the bundle $\mathscr{V}_i$, which are again smooth since $\mu^\pm_i$ are minusucle and have maps to convolution Grassmannians 
\begin{equation}
    \mathsf{QM}^{\circ, (i, \mu_i^+, \mu_i^-)}(X) \xrightarrow[]{\, \, \, \pi_i \, \, \, } \overline{\text{Gr}}^{\mu^+_i}_{GL_i} \widetilde{\times} \overline{\text{Gr}}^{\mu^-_i}_{GL_i}.
\end{equation}
The convolution Grassmannian has two universal quotient bundles $Q_+$, $Q_-$ of rank $i - 1$. Then using the convolution product we may define the following insertion (compare to the proof of Proposition \ref{mcrel}) 
\begin{equation}
\begin{split}
    \langle \widetilde{Q}_i(x) \rangle & := \langle(U_i^+(x) U_i^-(x - \varepsilon) + 1)Q_i(x - \varepsilon)^{-1} \rangle \\ & := \int_{\mathsf{QM}^{\circ, (i, \mu^+_i, \mu^-_i)}(X)} \text{ev}_\infty^*\Big(z^{\frac{c_1(V)}{\varepsilon}} \times \Gamma_\varepsilon(TX + \varepsilon) \Big) \cup \pi_i^*(1 - c_{x}(Q_+^{(i)}) \cup c_{- x + \varepsilon}(Q_-^{(i)})) \cup \frac{1}{c_{x - \varepsilon}( \mathscr{V}_i \eval_0 )} z^{\deg f}
\end{split}
\end{equation}
which is in fact a polynomial of degree $n - 2$ in $x$ as a corollary of the proof of Proposition \ref{mcrel}. It is important that insertions of $U^\pm$ do not commute, because the convolution product on $\text{Gr}_G$ is not commutative $\mathbb{C}^\times_\varepsilon$ equivariantly. In analogy with correlation functions in quantum mechanics, it is best to view insertions as time-ordered from right to left, with the insertions of $U^\pm_i(x)$ signifying the order in which we perform modifications of the bundle $\mathscr{V}_i$. 

We understand that, in the presence of a shift by $\mu^\pm_i$,
\begin{equation}
    z^{\deg f} = \prod_{j = 1}^nz_j^{-(\deg \mathscr{V}_j - \deg \mathscr{V}_{j -  1} \pm \delta_{ij})}.
\end{equation}

\subsubsection{}
Now let 
\begin{equation}
    \widehat{U}^\pm_n(x) := \pm \sum_{i = 1}^n e^{\pm \varepsilon \frac{\partial}{\partial a_i}} \prod_{j (\neq i)} \frac{x - a_j}{a_i - a_j}
\end{equation}
denote the familiar difference operators, and write $Q_n(x) = \prod_i(x - a_i) = c_x(\mathscr{W}_n \eval_0)$,  $\widetilde{Q}_n(x) = (\widehat{U}^+_n(x) \widehat{U}^-_n(x - \varepsilon) + 1)Q_n(x - \varepsilon)^{-1}$ as usual. Finally, let 
\begin{equation}
    \mathsf{Vertex}^\theta(z, a) := \mathsf{Vertex}( (-1)^{i - 1}z_i, a)
\end{equation}
denote the vertex with sign of the $z$ variables changed as $(z_1, z_2, \dots, z_n) \mapsto (z_1, -z_2, \dots, (-1)^{n - 1}z_n)$. 

We are now in a position to prove 
\begin{theorem} \label{vertexint}
The vertex function satisfies difference equations in the $a$-variables and differential equations in the $z$-variables characterized by 
\begin{equation} 
\begin{bmatrix}x - \varepsilon z_n \partial_{z_n} & z_n^{-1} \\ -z_n & 0 \ \end{bmatrix} \times \dots \times \begin{bmatrix} x - \varepsilon z_1 \partial_{z_1} & z_1^{-1} \\ -z_1 & 0 \end{bmatrix} \cdot \mathsf{Vertex}^\theta(z, a) = \mathsf{Vertex}^\theta(z, a) \cdot \begin{bmatrix} Q_n(x) & \widehat{U}^+_n(x) \\ \widehat{U}^-_n(x) & \widetilde{Q}_n(x) \end{bmatrix}
\end{equation}
where we understand that differential operators in $z$ act from the left and difference operators in $a$ act from the right. 
\end{theorem}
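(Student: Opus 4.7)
The plan is to prove the theorem by induction on the bundle index $i$, interpolating between the two sides of the identity via $2\times 2$ matrix-valued enumerative insertions. For each $i \in \{0, 1, \ldots, n\}$, set
\[
\Phi_i(x) \; := \; \begin{bmatrix} \langle Q_i(x)\rangle & \langle U^+_i(x)\rangle \\ \langle U^-_i(x)\rangle & \langle \widetilde{Q}_i(x)\rangle \end{bmatrix},
\]
with $\widetilde{Q}_i(x)$ defined via the convolution Grassmannian $\overline{\text{Gr}}^{\mu^+_i}_{GL_i} \widetilde{\times} \overline{\text{Gr}}^{\mu^-_i}_{GL_i}$ exactly as in the rest of Section \ref{vertexmult}. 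Writing $D_i(x) := \begin{bmatrix} x - \varepsilon z_i\partial_{z_i} & z_i^{-1} \\ -z_i & 0 \end{bmatrix}$ for the $i$th factor on the left-hand side, the plan is to establish (A) the base case $\Phi_0(x) = I_2\cdot\mathsf{Vertex}^\theta(z,a)$, (B) the recursion $\Phi_i(x) = D_i(x)\cdot\Phi_{i-1}(x)$, and (C) the cap $\Phi_n(x) = \mathsf{Vertex}^\theta(z,a)\cdot M(x)$ where $M(x)$ is the right-hand side matrix of difference operators in $a$. Telescoping (A)--(C) then gives the theorem.

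Step (A) is tautological: $\mathscr{V}_0 = 0$ forces $Q_0 = \widetilde{Q}_0 = 1$ and $U^\pm_0 = 0$. Step (C) is a cohomological refinement of Proposition \ref{vertexcomparison}: equivariant localization on $\overline{\text{Gr}}^{\mu^\pm}_{GL_n}$ converts each $\langle U^\pm_n(x)\rangle$ to $\mathsf{Vertex}^\theta$ acted on from the right by the Macdonald-type shift operator $\widehat{U}^\pm_n(x)$, with the inserted Chern polynomial $c_{\pm x}(Q_\pm)$ producing precisely the numerator $\prod_{j\neq i}(x - a_j)$ at the $i$th fixed point. The entry $\langle \widetilde{Q}_n(x)\rangle$ is then identified with $\widetilde{Q}_n(x)$ of Proposition \ref{mcrel} using the convolution structure together with the uniqueness statement in that proposition.

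The heart of the proof is step (B), which by matrix multiplication unpacks into four scalar identities
\[
\begin{aligned}
\langle Q_i(x)\rangle &= (x - \varepsilon z_i\partial_{z_i})\langle Q_{i-1}(x)\rangle + z_i^{-1}\langle U^-_{i-1}(x)\rangle, \\
\langle U^+_i(x)\rangle &= (x - \varepsilon z_i\partial_{z_i})\langle U^+_{i-1}(x)\rangle + z_i^{-1}\langle \widetilde{Q}_{i-1}(x)\rangle, \\
\langle U^-_i(x)\rangle &= -z_i\langle Q_{i-1}(x)\rangle, \qquad \langle \widetilde{Q}_i(x)\rangle = -z_i\langle U^+_{i-1}(x)\rangle.
\end{aligned}
\]
I would prove these by applying the wall-crossing/flop invariance of Section \ref{geometricdiffeq} (cohomological version of Theorem \ref{Xknmflop}) to the local quiver piece $\mathscr{V}_{i-1}\to\mathscr{V}_i$. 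A Hecke modification of $\mathscr{V}_i$ is traded, via the generic sequence $0\to\mathscr{V}_{i-1}\to\mathscr{V}_i\to\mathscr{V}_i/\mathscr{V}_{i-1}\to 0$, for a modification of $\mathscr{V}_{i-1}$ plus a correction controlled by the restriction branching $\Lambda^k(\mathbb{C}^n) = \Lambda^k(\mathbb{C}^{n-1})\oplus\Lambda^{k-1}(\mathbb{C}^{n-1})$ of minuscule weights appearing in the asymptotic formula of Theorem \ref{Xknmflop}. The operator $x - \varepsilon z_i\partial_{z_i}$ arises from Grothendieck--Riemann--Roch: the action of $\varepsilon z_i\partial_{z_i}$ on the generating series produces $-\varepsilon(\deg\mathscr{V}_i - \deg\mathscr{V}_{i-1})$, which by GRR supplies precisely the Chern class of the quotient $\mathscr{V}_i/\mathscr{V}_{i-1}|_0$ needed to upgrade $c_x(\mathscr{V}_{i-1}|_0)$ to $c_x(\mathscr{V}_i|_0)$. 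The off-diagonal entries $\pm z_i^{\pm 1}$ of $D_i$ record the unit degree shifts in $z_i$ forced by the $\mu^\pm$ modifications on $\mathscr{V}_i$, exactly as in the derivation of \eqref{vertexrecursion}.

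The main technical obstacle will be the $(2,2)$ identity $\langle\widetilde{Q}_i(x)\rangle = -z_i\langle U^+_{i-1}(x)\rangle$, which a priori lives on the convolution Grassmannian and involves two successive Hecke modifications at $\mathscr{V}_i$. One must lift the polynomial-division argument of Proposition \ref{mcrel} to the enumerative level to see that the combination $(U^+_i(x)U^-_i(x-\varepsilon) + 1)Q_i(x-\varepsilon)^{-1}$, which is a priori rational in $x$, collapses geometrically to a single $\mu^+$ modification of $\mathscr{V}_{i-1}$ carrying the appropriate $z_i$-prefactor; equivalently, one establishes the collapse by iterating the first three identities and invoking uniqueness of the polynomial remainder. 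The sign twist $z_i\mapsto (-1)^{i-1}z_i$ built into $\mathsf{Vertex}^\theta$ is precisely what is needed to absorb the overall minus sign in the leading coefficient of $\widehat{U}^-$ as it propagates through the induction, ensuring that $D_i(x)$ takes the stated uniform shape at every $i$.
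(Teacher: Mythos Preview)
Your proposal is essentially the paper's proof: the matrices $\Phi_i(x)$, the factorization $\Phi_i = D_i\Phi_{i-1}$, the four scalar recursions, the base case at $i=0$, and the cap at $i=n$ via the cohomological analog of Proposition~\ref{vertexcomparison} are all exactly what the paper does. One small recalibration: in practice the delicate step is not the $(2,2)$ entry but the $U^+$ recursion, which in the paper requires a contour-integral wall-crossing, the exchange $\mathscr{V}_{i-1}\leftrightarrow\mathscr{V}_{i-1}^{\mu^-}$, and a check that the $Q$-recursion continues to hold with an extra $U^+$ insertion to the left (with the bundle appropriately shifted); once those three are in hand, the $\widetilde{Q}$ recursion is forced by the polynomial-division uniqueness you cite.
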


\begin{proof}
First, by reasoning identical to the proof of Propositions \ref{vertexcomparison}, we deduce 
\begin{equation}
    \mathsf{Vertex}^\theta(z, a) \cdot \widehat{U}^\pm_n(x) = \langle U^\pm_n(x) \rangle
\end{equation}
with the notation for the right hand side understood as above. The strategy of the proof is very simple: we will arrive at an inductive formula for the action of the operators on the right hand side. For the convenience of readers we break this into three steps and consider the action of the operators one at a time. 

\subsubsection{}
Let us first concentrate on the case of $\langle U^-_n(x) \rangle$. Spelling out the $\mathbb{C}^\times_\varepsilon \times A$-equivariant localization formula as in Proposition \ref{heckeinsertion}, we find
\begin{equation}
    \langle U^-_n(x) \rangle = (-1)^n z_n\Bigg\langle \sum_{i = 1}^n  \prod_\ell (v_\ell - a_i) \prod_{j (\neq i)} \frac{x - a_j}{a_i - a_j}\Bigg\rangle
\end{equation}
where $v_\ell$ are the Chern roots of $\mathscr{V}^*_{n - 1} \eval_0$ and $a_i$ are the equivariant variables for $A$, which are also the Chern roots of $\mathscr{W}^*_n \eval_0$. Now by the interpolation formula, the insertion is equal to 
\begin{equation}
    \sum_{i = 1}^n  \prod_\ell (v_\ell - a_i) \prod_{j (\neq i)} \frac{x - a_j}{a_i - a_j} = \prod_\ell(v_\ell - x) = (-1)^{n - 1}c_x \Big(\mathscr{V}_{n - 1} \eval_0 \Big)
\end{equation}
which implies the equality of vertex functions with insertions 
\begin{equation} \label{U-recursion}
    \langle U^-_n(x) \rangle = -z_n \langle Q_{n - 1}(x) \rangle
\end{equation}
with notations as above. 

\subsubsection{}
Next we look for a formula for $\mathsf{Vertex}^\theta(z, a) Q_n(x) =\langle Q_n(x) \rangle$. Interpolating $Q_n(x)$ as a degree $n$ polynomial in $x$ at the $n - 1$ Chern roots $v_\ell$ leads to the equality of insertions
\begin{equation}
    \langle Q_n(x) \rangle = \Bigg\langle \Big( x - \sum_ia_i + \sum_\ell v_\ell \Big)c_x \Big(\mathscr{V}_{n - 1} \eval_0 \Big) + \sum_{\ell = 1}^{n - 1} \prod_i(v_\ell - a_i) \prod_{k (\neq \ell)}\frac{x - v_k}{v_\ell - v_k} \Bigg\rangle.
\end{equation}
Now as in the proof of Theorem \ref{heckeeigenval}, we recognize in the second term the localization formula for a count of sections of $\mathscr{H}om(\mathscr{V}^{\mu^+}_{n - 1}, \mathscr{W}_n)$ for a Hecke modification of $\mathscr{V}_{n - 1}$ of type $\mu^+$. Just as in the proof of Theorem \ref{heckeeigenval}, we may exchange the role of $\mathscr{V}_{n - 1}$ and $\mathscr{V}_{n - 1}^{\mu^+}$ to trade it for a count $\langle U^-_{n - 1}(x) \rangle$ in our above notation. This leads to 
\begin{equation}
     \langle Q_n(x) \rangle = \Bigg\langle \Big( x + c_1\Big(\mathscr{W}_n \eval_0 \Big) - c_1\Big( \mathscr{V}_{n - 1} \eval_0  \Big) \Big)c_x \Big(\mathscr{V}_{n - 1} \eval_0 \Big) \Bigg\rangle + (-1)^{n - 2} (-1)^{n - 1} (-1) z_n^{-1} \langle U^-_{n - 1}(x) \rangle. 
\end{equation}
The prefactors arise as follows: $(-1)^{n - 1} z_n^{-1}$ is from the degree shift in the count of sections of $\mathscr{H}om(\mathscr{V}^{\mu^+}_{n - 1}, \mathscr{W}_n)$, $(-1)^{n - 2}$ is a sign from replacing integration over the orbit $\overline{\text{Gr}}^{\mu^+}_{GL_{n - 1}}$ by its dual $\overline{\text{Gr}}^{\mu^-}_{GL_{n - 1}}$, and the extra $(-1)$ is from the definition of $U^-$. 

Now for any of the tautological bundles $\mathscr{V}_i$ over $\mathsf{QM}^\circ(X) \times C$ we have
\begin{equation}
    \deg \mathscr{V}_i = (\mathsf{QM}^\circ \times C \to \mathsf{QM})_* c_1(\mathscr{V}_i) = \int_C c_1(\mathscr{V}_i) = \frac{c_1(\mathscr{V_i} \eval_0) - \text{ev}^*_\infty c_1(V_i)}{\varepsilon} \in H^0(\mathsf{QM}^\circ ; \mathbb{Z}). 
\end{equation}
Then taking into account the factors $\text{ev}^*_\infty (z^{c_1(V)/\varepsilon}) z^{\deg f}$ in $\mathsf{Vertex}^\theta$, we may rewrite the above as 
\begin{equation} \label{Qrecursion}
    \langle Q_n(x) \rangle = (x - \varepsilon z_n \partial_{z_n}) \langle Q_{n - 1}(x) \rangle + z_n^{-1} \langle U^-_{n - 1}(x) \rangle. 
\end{equation}
This completes the second step. 

\subsubsection{}
Now we study the localization formula for $\langle U^+_n(x) \rangle$. We have
\begin{equation}
\begin{split}
    \langle U^+_n(x) \rangle & = (-1)^{n - 1} z_n^{-1} \Bigg\langle \sum_{i = 1}^n \frac{1}{\prod_\ell(v_\ell - a_i + \varepsilon)} \prod_{j (\neq i)} \frac{x - a_j}{a_i - a_j} \Bigg\rangle \\
    & = -z_n^{-1} \Bigg\langle \frac{1}{c_{x - \varepsilon}(\mathscr{V}_{n - 1} \eval_0)}\sum_{\ell = 1}^{n - 1} \prod_{k (\neq \ell)} \frac{x - v_k - \varepsilon}{v_\ell - v_k} \prod_j\frac{x - a_j}{v_\ell - a_j + \varepsilon} \Bigg\rangle + z_n^{-1} \Bigg\langle \frac{1}{c_{x - \varepsilon}(\mathscr{V}_{n - 1} \eval_0)} \Bigg\rangle.
\end{split}
\end{equation}
The second line follows from the first either by a wall-crossing formula as in Section \ref{geometricdiffeq} or explicitly by analysis of residues in the contour integral 
\begin{equation}
    \int_\gamma \frac{du}{2\pi i } \prod_\ell \frac{x - v_\ell - \varepsilon}{u - v_\ell -  \varepsilon} \prod_j \frac{x - a_j}{u - a_j} \frac{1}{x - u}. 
\end{equation}
By the standard logic, we identify the first term above with a count of sections of $\mathscr{H}om(\mathscr{V}_{n - 1}^{\mu^-}, \mathscr{W}_n)$. Exchanging the role of $\mathscr{V}_{n - 1}$ and $\mathscr{V}^{\mu^-}_{n - 1}$ as in the proof of Theorem \ref{heckeeigenval}, this becomes an insertion of $U^+_{n - 1}(x - \varepsilon)$:
\begin{equation}
    \langle U^+_n(x) \rangle = \Bigg\langle \frac{1}{c_{x - \varepsilon}(\mathscr{V}_{n - 1}^{\mu^+} \eval_0)} U^+_{n -1}(x - \varepsilon)  c_x \Big( \mathscr{W}_n \eval_0 \Big)  \Bigg\rangle + z_n^{-1} \Bigg\langle \frac{1}{c_{x-\varepsilon}(\mathscr{V}_{n - 1} \eval_0)}\Bigg\rangle
\end{equation}
where we multiplied the first term by $(-1)^{n - 1} z_n (-1)^{n - 2}$ to account for the degree shift and exchanging the orbit $\overline{\text{Gr}}^{\mu^-}_{GL_{n - 1}}$ by its dual. Note also that we must replace $\mathscr{V}_{n - 1}$ by $\mathscr{V}^{\mu^+}_{n - 1}$ in insertions. The reader may enjoy verifying that 
\begin{equation}
    \Bigg\langle \frac{1}{c_{x - \varepsilon}(\mathscr{V}^{\mu^+}_{n - 1} \eval_0)} U^+_{n - 1}(x - \varepsilon) \dots \Bigg \rangle = \Bigg\langle U^+_{n - 1}(x) \frac{1}{c_x(\mathscr{V}_{n - 1} \eval_0)} \dots \Bigg \rangle
\end{equation}
where dots denote an arbitrary insertion. Now we observe that the proof of \eqref{Qrecursion} implies that it continues to hold with arbitrary additional operator insertions to the left, \textit{provided we replace $\mathscr{V}_{n - 1}$ by $\mathscr{V}_{n - 1}^{\mu^-}$ wherever it appears in the term with $U^-_{n - 1}(x)$}. In all, we conclude 
\begin{equation}
\begin{split}
    \langle U^+_n(x) \rangle & = (x - \varepsilon z_n \partial_{z_n}) \langle U^+_{n - 1}(x) Q_{n - 1}(x)^{-1} Q_{n - 1}(x) \rangle + z_n^{-1} \langle U^+_{n - 1}(x) c_x\Big( \mathscr{V}_{n - 1}^{\mu^-} \eval_0 \Big)^{-1} U^-_{n - 1}(x) \rangle \\ & + z_n^{-1} \Bigg\langle \frac{1}{Q_{n - 1}(x - \varepsilon)} \Bigg\rangle.
\end{split}
\end{equation}
Now the reader may also verify that $\langle \dots c_x(\mathscr{V}^{\mu^-}_{n - 1} \eval_0)^{-1} U^-_{n - 1}(x) \rangle = \langle \dots U^-_{n - 1}(x - \varepsilon) c_{x - \varepsilon}(\mathscr{V}_{n - 1} \eval_0)^{-1} \rangle$. Then we have 
\begin{equation} \label{U+recursion}
\begin{split}
    \langle U^+_n(x) \rangle & = (x - \varepsilon z_n \partial_{z_n}) \langle U^+_{n - 1}(x) \rangle + z_n^{-1}\langle (U^+_{n - 1}(x)U^-_{n - 1}(x - \varepsilon) + 1) Q_{n - 1}(x - \varepsilon)^{- 1} \rangle \\
    & = (x - \varepsilon z_n \partial_{z_n}) \langle U^+_{n - 1}(x) \rangle + z_n^{-1} \langle \widetilde{Q}_{n - 1}(x) \rangle. 
\end{split}
\end{equation}
This completes the third step. 

\subsubsection{}
The recursion relation for $\widetilde{Q}_n(x)$, by construction, is uniquely determined through those for $Q_n(x)$ and $U^\pm_n(x)$. Combining \eqref{U-recursion}, \eqref{Qrecursion}, \eqref{U+recursion}, we conclude 
\begin{equation}
    \begin{bmatrix} x - \varepsilon z_n \partial_{z_n} & z_n^{-1} \\ -z_n & 0 \end{bmatrix} \begin{bmatrix} \langle Q_{n - 1}(x) \rangle & \langle U^+_{n - 1}(x) \rangle \\ \langle U^-_{n - 1}(x) \rangle & \langle \widetilde{Q}_{n -1}(x) \rangle\end{bmatrix} = \begin{bmatrix} \langle Q_n(x) \rangle & \langle U^+_n(x) \rangle \\ \langle U^-_n(x) \rangle & \langle \widetilde{Q}_n(x) \rangle \end{bmatrix} = \mathsf{Vertex^\theta}(z, a) \cdot \begin{bmatrix} Q_n(x) & \widehat{U}^+_n(x) \\ \widehat{U}^-_n(x) & \widetilde{Q}_n(x) \end{bmatrix}.
\end{equation}
Now, clearly we may repeat this process inductively until we reach $n = 0$. This gives the statement of the theorem. 
\end{proof}

\subsection{Coulomb branches and shifted Yangians} \label{shiftedyang}
It has been well-known in representation theory for some time that quantized Coulomb branches in their various incarnations arise as quotients of quantum groups. A very productive way to think about quantum groups is to use the so-called $RTT$ formalism. In this section we will explain how Theorem \ref{vertexint} may be viewed as the geometric basis for the $RTT$ formalism of the quantized Coulomb branch $\widehat{\mathscr{M}}_C = H_*^{\mathbb{C}^\times_\varepsilon \ltimes G(\mathscr{O})}(\text{Gr}_G)$ of pure gauge theory (for $G = GL_n$). 

The equivariant/quantization parameter $\varepsilon$ will be considered fixed and generic throughout this section. 

\subsubsection{}
Let $A^\vee \subset GL_n^\vee$ denote the maximal torus of the Langlands dual group to $G = GL_n$. Let $z = (z_1, \dots, z_n) \in \, A^\vee$ be coordinates, and let $\text{Diff}(A^\vee)$ denote the algebra of polynomial differential operators on $A^\vee$. 

By Proposition \ref{Mcgen}, we can read off from Theorem \ref{vertexint} that there is an embedding 
\begin{equation}
    \widehat{\mathscr{M}}_C \xhookrightarrow{} \text{Diff}(A^\vee)
\end{equation}
simply by viewing $\mathsf{Vertex}^\theta(z, a)$ as an explicit kernel for it. We will now use the description of $\widehat{\mathscr{M}}_C$ by differential operators to characterize it as a quotient of a certain quantum group known as a shifted Yangian. 

\subsubsection{}
We recall that the Yangian $\mathsf{Y}(\mathfrak{gl}_2)$ is an associative Hopf algebra over $\mathbb{C}$ defined as follows (we follow \cite{molev1994}). Let $V \simeq \mathbb{C}^2$ be the defining representation of $\mathfrak{gl}_2$. The fundamental object is the $R$-matrix
\begin{equation} \label{Rmatrix}
    R(x) = \frac{x - \varepsilon P_{12}}{x - \varepsilon} \in \text{End}(V_1 \otimes V_2)(x)
\end{equation}
where $P_{12}$ denotes permutation of the two tensor factors. The generators $T^{(r)}_{ij}$, $1 \leq i, j \leq 2$ of $\mathsf{Y}(\mathfrak{gl}_2)$ are encoded in formal power series 
\begin{equation}
    T_{ij}(x) = \delta_{ij} + \sum_{r = 1}^\infty \frac{T^{(r)}_{ij}}{x^r} \in \mathsf{Y}(\mathfrak{gl}_2)[\![ x^{-1} ]\!].
\end{equation}
These may be viewed as the entries of a matrix $T(x) \in \mathsf{Y}(\mathfrak{gl}_2)[ \![ x^{-1}]\!] \otimes \text{End}(V)$. Then the defining relations of $\mathsf{Y}(\mathfrak{gl}_2)$ are the $RTT$ relations 
\begin{equation} \label{RTT}
    R_{12}(x_1 - x_2)T_1(x_1) T_2(x_2) = T_2(x_2) T_1(x_1) R_{12}(x_1 - x_2) \in \mathsf{Y}(\mathfrak{gl}_2) \otimes \text{End}(V_1 \otimes V_2)[\![ x_1, x_1^{-1}, x_2, x_2^{-1} ]\!]
\end{equation}
where a subscript on $T$ means it acts in the corresponding tensor factor. 

The center of $\mathsf{Y}(\mathfrak{gl}_2)$ is generated \cite{molev1994} by the coefficients of the ``quantum determinant''
\begin{equation}
    \text{qdet} \, T(x) := T_{22}(x) T_{11}(x - \varepsilon) - T_{12}(x) T_{21}(x - \varepsilon) 
\end{equation}
and the Yangian $\mathsf{Y}(\mathfrak{sl}_2)$ may be identified with the quotient of $\mathsf{Y}(\mathfrak{gl}_2)$ by $\text{qdet} \, T(x) = 1$. 

\subsubsection{}
For applications to Coulomb branches, it is too restrictive to consider $T$-matrices with leading coefficient $1$ in the $x \to \infty$ expansion. Instead, one needs the following generalization referred to as an (antidominantly) shifted Yangian. The $RTT$ formalism for these that we invoke here was developed in \cite{Frassek_2022}.

Fix a dominant coweight $\mu = (\mu_1, \mu_2)$ of $GL_2$. Following \cite{Frassek_2022}, we consider more general $T$-matrices 
\begin{equation}
    T_{ij}(x) = \sum_{r \in \mathbb{Z}} \frac{T^{(r)}_{ij}}{x^r}
\end{equation}
which are assumed to satisfy the $RTT$ relations \eqref{RTT} (with the same $R$-matrix) in addition to the following condition. We require that $T(x)$ admits a Gauss decomposition 
\begin{equation} \label{gauss}
    T(x) = \begin{bmatrix} 1 & 0 \\ f(x) & 1\end{bmatrix} \begin{bmatrix} g_1(x) & 0 \\ 0 & g_2(x) \end{bmatrix} \begin{bmatrix} 1 & e(x) \\ 0 & 1\end{bmatrix}
\end{equation}
where $f(x) = O(x^{-1})$, $e(x) = O(x^{-1})$, and $g_i(x) = x^{\mu_i} + O(x^{\mu_i - 1})$ in the $x \to \infty$ expansion. The quantum group generated by the coefficients of such a $T$-matrix is denoted $\mathsf{Y}_{-\mu}(\mathfrak{gl}_2)$ and called an antidominantly shifted Yangian. When $\mu = 0$ we recover the usual $\mathsf{Y}(\mathfrak{gl}_2)$. 

The quantum determinant again generates \cite{Frassek_2022} the center of $\mathsf{Y}_{-\mu}(\mathfrak{gl}_2)$, and $\mathsf{Y}_{-\mu}(\mathfrak{sl}_2)$ may be defined as the quotient of $\mathsf{Y}_{-\mu}(\mathfrak{gl}_2)$ by $\text{qdet} \, T(x) = 1$. 

\subsubsection{}
Now we consider the matrix featuring on the left hand side of Theorem \ref{vertexint}:
\begin{equation}
    S(x) = \begin{bmatrix} x - \varepsilon z\partial_z & z^{-1} \\ -z & 0 \end{bmatrix} \in \text{Diff}(\mathbb{C}^\times) \otimes \text{End}(V).
\end{equation}
The following is a short computation.
\begin{prop}
    The matrix $S(x)$ satisfies the relation $$R_{12}(x_1 - x_2) S_1(x_1) S_2(x_2) = S_2(x_2) S_1(x_1) R_{12}(x_1 - x_2) $$
with the $R$-matrix as in \eqref{Rmatrix}. 
\end{prop}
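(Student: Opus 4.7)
The plan is a direct componentwise verification. Clearing the scalar denominator of the $R$-matrix $R(u) = (u - \varepsilon P_{12})/(u - \varepsilon)$, the claim reduces to
\[
(x_1 - x_2 - \varepsilon P_{12})\, S_1(x_1) S_2(x_2) = S_2(x_2) S_1(x_1)\,(x_1 - x_2 - \varepsilon P_{12}).
\]
Expanding both sides in the tensor basis $\{E_{ij} \otimes E_{kl}\}$ of $\mathrm{End}(V \otimes V)$ and using $P_{12}(E_{ij} \otimes E_{kl}) = E_{kj} \otimes E_{il}$ produces the family of sixteen scalar identities
\[
(x_1 - x_2)\,[S_{ij}(x_1), S_{kl}(x_2)] = \varepsilon\bigl(S_{kj}(x_1) S_{il}(x_2) - S_{kj}(x_2) S_{il}(x_1)\bigr),
\qquad i,j,k,l \in \{1,2\}.
\]

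Most of these identities are immediate for structural reasons. Three of the four entries of $S(x)$, namely $S_{12}(x) = z^{-1}$, $S_{21}(x) = -z$, and $S_{22}(x) = 0$, are independent of the spectral parameter, while $S_{11}(x) = x - \varepsilon z \partial_z$ depends on $x$ only through an additive scalar. Hence if neither $(k,j)$ nor $(i,l)$ equals $(1,1)$, both factors on the right-hand side are $x$-independent, so the right-hand side is a difference of the same expression at $x_1$ and $x_2$ and vanishes; the corresponding left-hand side also vanishes because $z$, $z^{-1}$ mutually commute.

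The genuinely substantive cases are those in which at least one of $(k,j)$, $(i,l)$ equals $(1,1)$, and each reduces directly to the canonical commutation relations $[z \partial_z, z^{\pm 1}] = \pm z^{\pm 1}$. For example, for $(i,j,k,l) = (1,1,1,2)$ one checks that both sides equal $\varepsilon(x_1 - x_2)\,z^{-1}$, and the remaining cases are entirely analogous. The only obstacle is bookkeeping, namely tracking signs and the ordering of non-commuting factors; I would dispatch this by treating the trivial cases uniformly first and then handling the few remaining cases together. A conceptually cleaner perspective, bypassing even this bookkeeping, is to recognise $S(x)$ as the Lax matrix of a single site of the open Toda chain, for which the RTT relation with the Yang $R$-matrix is classical, see e.g.~\cite{sklyanin2000}.
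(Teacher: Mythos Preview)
Your proposal is correct and takes essentially the same approach as the paper, which simply records that the proposition ``is a short computation'' without writing out any details. Your componentwise reduction to the canonical commutation relation $[z\partial_z, z^{\pm 1}] = \pm z^{\pm 1}$, together with the identification of $S(x)$ as the single-site Toda Lax matrix, is exactly the kind of verification the paper has in mind.
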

In addition, inspecting the Gauss decomposition \eqref{gauss} of $S(x)$, we read off that $g_1(x) = x + \dots $ and $g_2(x) = x^{-1} + \dots $ where dots denote lower terms in the $x \to \infty$ expansion. Since $\text{qdet} \, S(x) = 1$, we conclude at once 

\begin{prop}
Let $\alpha = (1, - 1)$ denote the simple positive coroot of $SL_2$. Then there is a map 
\begin{equation}
    \mathsf{Y}_{-\alpha}(\mathfrak{sl}_2) \to \text{Diff}(\mathbb{C}^\times)
\end{equation}
uniquely determined by $T(x) \mapsto S(x)$. 
\end{prop}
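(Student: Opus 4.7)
The plan is to invoke the RTT presentation of the antidominantly shifted Yangian directly. By construction \cite{Frassek_2022}, $\mathsf{Y}_{-\mu}(\mathfrak{gl}_2)$ is the universal associative algebra $\mathcal{A}$ equipped with a formal matrix $T(x) \in \mathcal{A}[\![x, x^{-1}]\!] \otimes \text{End}(V)$ satisfying the RTT relation \eqref{RTT} and admitting a Gauss decomposition of the form \eqref{gauss} with the leading asymptotics dictated by $\mu$. Taking $\mathcal{A} = \text{Diff}(\mathbb{C}^\times)$ and $T(x) = S(x)$, I will verify these two conditions and thereby obtain a canonical algebra homomorphism $\mathsf{Y}_{-\alpha}(\mathfrak{gl}_2) \to \text{Diff}(\mathbb{C}^\times)$; uniqueness is automatic, since the Yangian is generated by the coefficients of $T(x)$.

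The RTT relation is precisely the content of the previous proposition, so what remains for this first step is the Gauss decomposition. I would compute it by matching entries in \eqref{gauss}: one finds $g_1(x) = x - \varepsilon z \partial_z$, $e(x) = g_1(x)^{-1} z^{-1}$, $f(x) = -z g_1(x)^{-1}$, and $g_2(x) = z g_1(x)^{-1} z^{-1}$, where the inverse $g_1(x)^{-1} = x^{-1} \sum_{k \geq 0} (\varepsilon z \partial_z / x)^k$ is interpreted as a formal Laurent series in $x^{-1}$ with coefficients in $\text{Diff}(\mathbb{C}^\times)$. The commutation identity $z (z \partial_z) z^{-1} = z \partial_z - 1$ lets me rewrite $g_2(x) = (x + \varepsilon - \varepsilon z \partial_z)^{-1}$, which begins at $x^{-1}$, while $g_1(x)$ begins at $x^{+1}$ and $e(x), f(x) = O(x^{-1})$. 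This matches exactly the asymptotics required for the shift $\mu = \alpha = (1,-1)$.

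The final step is to descend from $\mathsf{Y}_{-\alpha}(\mathfrak{gl}_2)$ to $\mathsf{Y}_{-\alpha}(\mathfrak{sl}_2)$ by checking that the quantum determinant is trivial. This is an immediate computation:
\begin{equation*}
\text{qdet} \, S(x) = S_{22}(x) S_{11}(x - \varepsilon) - S_{12}(x) S_{21}(x - \varepsilon) = 0 \cdot (x - \varepsilon - \varepsilon z \partial_z) - z^{-1} \cdot (-z) = 1,
\end{equation*}
so the map factors through the central quotient defining $\mathsf{Y}_{-\alpha}(\mathfrak{sl}_2)$. There is no real obstacle here: the substantive verification is the RTT relation, which was handled by the preceding proposition, and everything else reduces to expanding formal power series in $x^{-1}$ and tracking the elementary commutation between $z$ and $z \partial_z$ to ensure the Gauss decomposition entries indeed lie in $\text{Diff}(\mathbb{C}^\times)[\![x^{-1}]\!]$ with the correct leading order.
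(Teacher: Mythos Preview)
Your proof is correct and follows essentially the same route as the paper: the paper's argument, presented in the paragraph immediately preceding the proposition, consists of invoking the previous RTT proposition, inspecting the Gauss decomposition of $S(x)$ to read off $g_1(x) = x + \dots$ and $g_2(x) = x^{-1} + \dots$, and noting $\text{qdet}\,S(x) = 1$. You carry out exactly these steps, with the Gauss decomposition and quantum determinant computations written out explicitly rather than left to the reader.
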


\subsubsection{}
One of the advantages of the $RTT$ formalism is that the Hopf algebra structure on the quantum group is manifest. In particular, for any splitting $\mu = \mu_1 + \mu_2$ with $\mu_i$ both dominant, there is a coproduct (Proposition 2.136 of \cite{Frassek_2022}) 
\begin{equation}
    \mathsf{Y}_{-\mu}(\mathfrak{gl}_2) \xrightarrow{\, \, \, \Delta \, \, \, } \mathsf{Y}_{-\mu_1}(\mathfrak{gl}_2) \otimes \mathsf{Y}_{-\mu_2}(\mathfrak{gl}_2)
\end{equation}
given by $\Delta(T_{ij}(x)) = \sum_k T_{ik}(x) \otimes T_{kj}(x)$. Moreover the coproduct preserves the $\text{qdet} = 1$ condition so descends to a coproduct on the $\mathfrak{sl}_2$ shifted Yangian. 

In particular, for any $n \geq 1$ we observe that we have a map $\mathsf{Y}_{-n\alpha}(\mathfrak{sl}_2) \to \text{Diff}(A^\vee)$ (recall $A^\vee \simeq (\mathbb{C}^\times)^n$) uniquely determined by 
\begin{equation} \label{Mcyangian}
    T(x) \mapsto S(x) = \begin{bmatrix}x - \varepsilon z_n \partial_{z_n} & z_n^{-1} \\ -z_n & 0 \ \end{bmatrix} \times \dots \times \begin{bmatrix} x - \varepsilon z_1 \partial_{z_1} & z_1^{-1} \\ -z_1 & 0 \end{bmatrix}.
\end{equation}
Note this map factors as an iterated application of the coproduct followed by the tensor product of the maps $\mathsf{Y}_{-\alpha}(\mathfrak{sl}_2) \to \text{Diff}(\mathbb{C}^\times)$ constructed above. 

Since the matrix coefficients of $T(x)$ generate the shifted Yangian by definition and the matrix coefficients of $S(x)$ generate the image of the Coulomb branch under $\widehat{\mathscr{M}}_C \xhookrightarrow{} \text{Diff}(A^\vee)$ by Proposition \ref{Mcgen} and Theorem \ref{vertexint}, we conclude 

\begin{theorem} \label{yangianquotient}
The assignment \eqref{Mcyangian} uniquely determines a surjection $\mathsf{Y}_{-n\alpha}(\mathfrak{sl}_2) \twoheadrightarrow \widehat{\mathscr{M}}_C$. 
\end{theorem}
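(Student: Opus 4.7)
The plan is to construct the map by iterating the single-factor case already established. The proposition preceding the theorem gives a map $\mathsf{Y}_{-\alpha}(\mathfrak{sl}_2) \to \mathrm{Diff}(\mathbb{C}^\times)$ by sending $T(x)$ to the $2 \times 2$ matrix $S_{(i)}(x) = \begin{bmatrix} x - \varepsilon z_i \partial_{z_i} & z_i^{-1} \\ -z_i & 0 \end{bmatrix}$, using $z_i$ as the coordinate on the $i$-th $\mathbb{C}^\times$-factor. Because the coproduct on $T$-matrices is $\Delta(T_{ij}(x)) = \sum_k T_{ik}(x) \otimes T_{kj}(x)$, which on the level of matrices is simply matrix multiplication, iterating the coproduct $n - 1$ times and then applying the tensor product of the single-factor maps produces the composite $\mathsf{Y}_{-n\alpha}(\mathfrak{sl}_2) \to \mathrm{Diff}(A^\vee)$ sending $T(x)$ to exactly the matrix product $S(x) = S_{(n)}(x) \cdots S_{(1)}(x)$ appearing in Theorem \ref{vertexint}. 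One must verify here only that the $\mathrm{qdet} = 1$ condition is preserved by the coproduct (which is standard, see \cite{Frassek_2022}) and that the Gauss-decomposition asymptotics for $S(x)$ are correct for the shift $-n\alpha$; the latter is a routine inspection of the $x \to \infty$ expansion of an $n$-fold product of $S_{(i)}(x)$'s.

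Next, I would argue that the image of this map lands inside the copy of $\widehat{\mathscr{M}}_C$ sitting inside $\mathrm{Diff}(A^\vee)$. This is exactly the content of Theorem \ref{vertexint}: the entries of $S(x)$ act on $\mathsf{Vertex}^\theta(z, a)$ from the left in precisely the same way that $Q_n(x)$, $\widehat{U}^\pm_n(x)$, $\widetilde{Q}_n(x)$ act from the right. Since the vertex function is a nontrivial kernel, this identifies the image of each entry of $S(x)$ with the corresponding entry in the matrix of Coulomb branch generators, viewed inside $\mathrm{Diff}(A^\vee)$ via the embedding $\widehat{\mathscr{M}}_C \hookrightarrow \mathrm{Diff}(A^\vee)$.

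Finally, surjectivity is immediate from Proposition \ref{Mcgen}: the coefficients of $Q_n(x)$, $\widehat{U}^\pm_n(x)$, $\widetilde{Q}_n(x)$ generate $\widehat{\mathscr{M}}_C$ over $\mathbb{C}[\varepsilon]$, and by the previous step these are hit by the images of the coefficients of $T_{ij}(x)$ under our map. Uniqueness of the map is also immediate because the coefficients of $T_{ij}(x)$ generate $\mathsf{Y}_{-n\alpha}(\mathfrak{sl}_2)$ by definition.

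The main obstacle is really the identification step, i.e.\ showing that the image lies in $\widehat{\mathscr{M}}_C$ and not just in the ambient ring of differential operators. Everything else is either formal from the Hopf structure of the shifted Yangian or has been reduced to the single-factor proposition; the nontrivial mathematical content is carried entirely by Theorem \ref{vertexint}, which has already been proved. Once that theorem is in hand, the argument reduces to assembling these pieces, and I would present it in roughly the order above: construct the algebra map via coproduct plus single-factor case, invoke Theorem \ref{vertexint} to identify the image with the generators of $\widehat{\mathscr{M}}_C$, and invoke Proposition \ref{Mcgen} for surjectivity.
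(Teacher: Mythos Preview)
Your proposal is correct and follows essentially the same approach as the paper: construct the map $\mathsf{Y}_{-n\alpha}(\mathfrak{sl}_2) \to \mathrm{Diff}(A^\vee)$ by iterating the coproduct on the single-factor map, use Theorem \ref{vertexint} to identify the matrix entries of $S(x)$ with the images of the Coulomb branch generators under the embedding $\widehat{\mathscr{M}}_C \hookrightarrow \mathrm{Diff}(A^\vee)$, and conclude surjectivity from Proposition \ref{Mcgen}. The paper's argument is more terse but structurally identical, with the auxiliary checks (Gauss asymptotics, preservation of $\mathrm{qdet}=1$) handled in the paragraphs leading up to the theorem rather than in the proof itself.
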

Note that we are able to conclude this without explicitly characterizing the kernel, though our method shows that the kernel is identified naturally with the kernel of the representation $\mathsf{Y}_{-n \alpha}(\mathfrak{sl}_2) \to \text{Diff}(A^\vee)$. The commuting family of differential operators in the matrix coefficient $S_{11}(x)$ are nothing but the Hamiltonians of the quantum open Toda chain, so in this way we have recovered and geometrized the quantum inverse scattering method for the open Toda system \cite{sklyanin2000}. 

\newpage
\begin{appendices}

\section{Physics motivation} \label{phys}
As mentioned in the introduction, the physics background for this paper is concerned with certain localization computations in a three-dimensional topologically twisted gauge theory (or a four-dimensional gauge theory, when computations are in $K$-theory). The twisted version of the theory is sometimes referred to as the 3d $A$-model, see \cite{gammagehilburn}, \cite{gammagehilburnmazelgee} for mathematical foundations in the case of abelian gauge theories.

\subsubsection{}
Precisely, the situation we consider is the following. We consider three-dimensional $\mathscr{N} = 4$ gauge theory with gauge group $U(n)$ in the $A$-type topological twist. The bosonic field content of this theory consists of a gauge field $A$, a real scalar $\sigma$, and a complex scalar $\varphi$ all valued in $\mathfrak{gl}_n$. 

In a companion paper \cite{tam24}, we studied localization computations in such theories in the presence of boundary conditions and $\Omega$-deformation in some detail. We consider in this paper the following generalization of the setup there. The worldvolume of our 3d gauge theory is $\mathbb{R}^2_\varepsilon \times I$, where $I = [0, 1]$ is an interval with coordinate $t$ and $\varepsilon$ denotes $\Omega$-background. At the $t = 0$ end of $I$, we place a Dirichlet boundary condition. At the $t = 1$ end of $I$, we place a Neumann boundary condition. 

On a Neumann boundary, the restrictions $A \eval_\partial$, $\varphi \eval_\partial$ and their fermionic partners comprise a 2d $\mathscr{N} = (2, 2)$ vector multiplet, and the three-dimensional topological supercharge restricts to that of the two-dimensional $A$-model on the boundary. Then given any two-dimensional $\mathscr{N} = (2, 2)$ degrees of freedom with $U(n)$ flavor symmetry, we can gauge this symmetry using the restrictions of the bulk vector multiplet fields and produce a nontrivial bulk/boundary system. 

In this paper in Section \ref{multmaps}, we consider precisely the situation where the boundary degrees of freedom are a gauged linear sigma model which flows in the Higgs phase to a nonlinear sigma model with target $X = GL_n/B$.

\subsubsection{}
On the one hand, with no bulk operator insertions the three-dimensional dynamics becomes essentially trivial in the presence of the dual Dirichlet and Neumann boundary conditions, in the sense that the moduli space of solutions to the appropriate BPS equations is a single point with no automorphisms. 

On the boundary, the BPS equations are vortex equations, whose moduli space of solutions is expected to coincide with the moduli space of quasimaps to $X$. The coupling between the bulk and boundary just asserts that the fixed trivial framing bundle in the quasimap problem should be identified, as a holomorphic bundle on the spatial slice $\mathbb{R}^2 \simeq \mathbb{C} \simeq C \setminus \{ \infty \}$, with the fixed trivial bundle arising from the gauge theory in the bulk. 

Consequently, the partition function on $\mathbb{R}^2_\varepsilon \times I$ with the flag variety and Dirichlet boundary conditions just computes the vertex function of the flag variety. The $a$-variables may be interpreted as the boundary values of the bulk vector multiplet scalar $\varphi$ on the Dirichlet boundary. 

\subsubsection{}
In the presence of insertions of a bulk monopole operator of charge $\mu$, the bulk BPS equations require that the gauge bundle restricted on the Neumann boundary is related to the fixed trivial one on the Dirichlet boundary by a Hecke modification of type $\mu$. Then the full moduli space of solutions to the BPS equations coincides with one of the spaces $\mathsf{QM}^{(\circ, \mu)}(X)$ considered at length in the main text. 

On the other hand, the explicit localization formula for the Hecke modified vertex function relates the bulk monopole operator insertions to the action of a difference operator on the partition function without an insertion. This is the content of the cohomological limit of Proposition \ref{vertexcomparison}.

\subsubsection{}
The strategy of the proof of Theorem \ref{vertexint} has the following interpretation in 3d gauge theory. Because we use the gauged linear sigma model description of the boundary degrees of freedom, there is in fact a gauge group $U(1) \times U(2) \times \dots \times U(n - 1)$ living at the Neumann boundary. This can be ``unfolded'' in an accordion-like way into a sequence of $U(1), U(2), \dots, U(n - 1)$ 3d gauge theories with walls/interfaces between them. On either side of the interface we place Neumann boundary conditions on the vector multiplets and couple them to a 2d $\mathscr{N} = (2, 2)$ chiral multiplet that lives in the bifundamental representation $\text{Hom}(\mathbb{C}^{k - 1}, \mathbb{C}^k)$ of $U(k - 1) \times U(k)$. 

We can now take this rather complicated configuration and try to follow its renormalization group flow to the Coulomb branch. Then each segment with only a bulk $U(k)$ vector multiplet becomes well-approximated by a nonlinear sigma model to the Coulomb branch $\mathscr{M}_C(GL_k) = \text{Spec} \, H_*^{GL_k(\mathscr{O})}(\text{Gr}_{GL_k})$. On very general grounds, an interface becomes in this language a holomorphic Lagrangian correspondence defined in the product $\mathscr{M}_C(GL_{k - 1}) \times \mathscr{M}_C(GL_k)$. 

Because this Lagrangian has a simple description in the gauge theory in the ultraviolet, we are fortunate enough to identify it as cut out by some explicit equations. Identification and application of these equations constitutes the inductive step of the proof of Theorem \ref{vertexint}. Since we work in the $\Omega$-background, the holomorphic Lagrangian is replaced by its quantized version, namely a bimodule for the quantized Coulomb branches $\widehat{\mathscr{M}}_C(GL_{k - 1})$ and $\widehat{\mathscr{M}}_C(GL_k)$. In the proof of Theorem \ref{vertexint} we see the elements of those Coulomb branches entering as explicit cohomology classes pulled back from $GL_k(\mathscr{O})$-orbits on the $GL_k$ affine Grassmannian. 

\subsubsection{}
Moreover, the fact that the key ingredient in the identification of these equations (and their $K$-theoretic analogs) is a wall-crossing formula from Section \ref{geometricdiffeq} is no coincidence. The $X(k, n, m)$ quiver varieties studied in that section have a real FI parameter $\zeta_{\mathbb{R}}$ whose sign determines the stability condition for the GIT quotient, in other words distinguishes $X(k, n, m)$ and $X^\vee(k, n, m)$. The quiver description of $X(k, n, m)$ in fact arises as the effective theory living on the monopole operator, viewed as a probe of the bulk theory near the interface. The sign of the stability parameter determines whether the monopole operator is inserted to the left or the right of the interface, and thus the wall-crossing formulas provide the bridge which systematically moves such insertions across the interfaces. 

\subsubsection{}
As the algebra of polynomial differential operators $\text{Diff}(A^\vee) \simeq \widehat{\mathscr{M}}_C(A)$, we see that Theorem \ref{vertexint} may be rephrased as the assertion that the flag variety boundary condition may in fact be promoted to an interface between the $A^c$ gauge theory (superscript denotes maximal compact) and $U(n)$ gauge theory. In the semiclassical limit $\varepsilon \to 0$ this interface becomes a Lagrangian correspondence which is in fact the graph of an open embedding $\mathscr{M}_C(A) \xhookrightarrow{} \mathscr{M}_C(GL_n)$ known as the Toda embedding. This embedding is also the simplest instance of a multiplication morphism for generalized affine Grassmannian slices \cite{bfnslice}, \cite{finkelberg2017}, \cite{krylovperunov}, in accordance with a general conjecture due to J. Hilburn. 

\newpage 

\section{Flops and contour integrals} \label{contourint}
In this section we explain a different point of view on Theorem \ref{flopinv} based on contour integrals. 

\subsubsection{}
The $k = 1$ case of Theorem \ref{flopinv} says that 
\begin{equation}
    \chi(X(1, n)) = \chi(X^\vee(1, n))
\end{equation}
which, upon spelling out localization formulas, is the equivalent to the elementary identity of rational functions 
\begin{equation} \label{cuteidentity}
    \sum_{i = 1}^n \prod_{j (\neq i)} \frac{1 - ty_j/y_i}{1 - y_j/y_i} \prod_{\ell = 1}^n \frac{1 - tq^{-1}y_i/x_\ell}{1 - q^{-1}y_i/x_\ell} = \sum_{\ell = 1}^n \prod_{m (\neq \ell)} \frac{1 - tx_\ell/x_m}{1 - x_\ell/x_m} \prod_{i = 1}^n \frac{1 - tq^{-1}y_i/x_\ell }{1- q^{-1}y_i/x_\ell}. 
\end{equation}
An instructive way to prove this identity is to study the contour integral 
\begin{equation}
    I = \frac{1}{1 - t}\int_\gamma \frac{dz}{2\pi i z} \prod_{j = 1}^n \frac{1 - ty_j/z}{1 - y_j/z} \prod_{m = 1}^n \frac{1 - tq^{-1}z/x_m}{1 - q^{-1}z/x_m}.
\end{equation}
We pick the the chamber of equivariant variables $|y_1| < \dots < |y_n| < |qx_1| < \dots < |qx_n|$ and we choose the integration contour $\gamma$ such that $|z| = r = \text{const}$ where $|y_j| < r < |qx_m|$ for all $j$ and $m$. The integral should be viewed as over a middle-dimensional cycle in $\mathbb{C}^\times$ with coordinate $z$. The cycle is a certain translate of the maximal compact subgroup $U(1) \subset \mathbb{C}^\times$. 

The integral $I$ can be evaluated in two different ways: by sliding the integration contour towards $|z| \to 0$ or $|z| \to \infty$. For the $|z| \to 0$ evaluation we pick up residues at $z = y_i$ and we have 
\begin{equation}
    I = (\text{LHS of \eqref{cuteidentity}}) + \frac{t^n}{1 - t}
\end{equation}
where the second term is from the residue at $z = 0$. For the $|z| \to \infty$ evaluation we pick up residues at $z = qx_\ell$ and find
\begin{equation}
    I = (\text{RHS of \eqref{cuteidentity}}) + \frac{t^n}{1 - t}
\end{equation}
where the second term is from the residue at $z = \infty$. Noting that the residues at zero and infinity are the same, we conclude \eqref{cuteidentity}. 

\subsubsection{}
Following Section 2 in \cite{ko22}, we can give the following geometric interpretation to the above manipulations. Consider the vector space $V = \mathbb{C}^n
_x \oplus \mathbb{C}^n_y$ as in Section \ref{geometricdiffeq}. We consider the action of $\mathbb{C}^\times$ which scales $\mathbb{C}^n_x$ with weight $-1$ and $\mathbb{C}^n_y$ with weight $+1$. The quotient stack $[V/\mathbb{C}^\times]$ contains both $X(1, n)$ and $X^\vee(1, n)$ as open substacks corresponding to the GIT-stable loci for two different choices of stability conditions. 

The $\chi$-genus of the quotient stack $[V/\mathbb{C}^\times]$ is by definition equal to the integral 
\begin{equation}
    \chi([V/\mathbb{C}^\times]) = \frac{1}{1 - t}\int_\gamma \frac{dz}{2\pi i z} \prod_{j = 1}^n \frac{1 - ty_j/z}{1 - y_j/z} \prod_{m = 1}^n \frac{1 - tq^{-1}z/x_m}{1 - q^{-1}z/x_m} = I
\end{equation}
where the contour $\gamma$ is chosen by requiring that all directions in $V$ are attracted to the origin by the torus action in our chamber of the $y_j$ and $x_m$ variables, see the discussion on attracting lifts of distributions in \cite{ko22}. 

\subsubsection{}
Now let $X$ be any virtually smooth stack which is cohomologically proper for the action of a group $G$ (we refer again to \cite{ko22}, \cite{ko24} for clarifications on the meaning of these terms). Given a $G$-equivariant closed inclusion $Y \xhookrightarrow{} X$ and $G$-equivariant coherent sheaf $\mathscr{F}$ on $X$ we recall there are local cohomology groups $H^i_Y(X; \mathscr{F})$ which fit into a long exact sequence of the form 
\begin{equation}
    \dots \to H^i_Y(X; \mathscr{F}) \to H^i(X; \mathscr{F}) \to H^i(X \setminus Y; \mathscr{F}) \to H^{i + 1}_Y(X ; \mathscr{F}) \to  \dots 
\end{equation}
All terms in this sequence are, of course, $G$-modules in the equivariant situation. Now if $\mathscr{F}$ is taken to be the sheaf of differential forms then the long exact sequence for local cohomology gives the equality of Hirzebruch genera 
\begin{equation}
    \chi(X) = \chi(X \setminus Y) + \chi_Y(X)
\end{equation}
where the third term is by definition the character under $G$ of the local cohomology. This equation just says informally that if $X$ can be cut into pieces then $\chi(X)$ is a sum of contributions from each piece. 

\subsubsection{}
Returning to our concrete situation, there are two distinguished stratifications of $[V/\mathbb{C}^\times]$: 
\begin{equation}
\begin{split}
    [V/\mathbb{C}^\times] & = [(\mathbb{C}^n_x \oplus (\mathbb{C}^n_y \setminus 0))/\mathbb{C}^\times] \, \bigsqcup \, [(\mathbb{C}^n_x \oplus \{ 0 \})/\mathbb{C}^\times] = X(1, n) \, \bigsqcup \, [(\mathbb{C}^n_x \oplus \{ 0 \})/\mathbb{C}^\times] \\
    [V/\mathbb{C}^\times] & = [((\mathbb{C}^n_x \setminus 0 \oplus \mathbb{C}^n_y))/\mathbb{C}^\times] \, \bigsqcup \,  [(\{ 0 \} \oplus \mathbb{C}^n_y)/\mathbb{C}^\times] = X^\vee(1, n) \, \bigsqcup \, [( \{ 0 \} \oplus \mathbb{C}^n_y)/\mathbb{C}^\times].
\end{split}
\end{equation}
These are precisely the stratifications by GIT stable and unstable loci for each choice of stability condition for the $\mathbb{C}^\times$ action on $V$. Then we have 
\begin{equation}
    \chi([V/\mathbb{C}^\times]) = \chi(X(1, n)) + \text{local} = \chi(X^\vee(1, n)) + \text{local}.
\end{equation}
The local pieces are the contributions of the unstable loci, and they may be identified with the residues of the integrand in $I$ at zero at infinity. This is because the character of the local cohomology may be computed by a contour integral with the same integrand as $I$, but the requirement that the contour of integration be placed in such a way that the weights appearing in $N_{X/Y}$ must all be repelling. For the first stratification, this means $|z| < |y_j|$ for all $j$ and for the second, $|z| > |qx_m|$ for all $m$. 
\subsubsection{}
In principle, Theorem \ref{flopinv} could be approached from a similar angle by analyzing the contour integral 
\begin{equation}
    I(k, n) = \frac{1}{k! (1 - t)^k}\int_\gamma \prod_{j = 1}^k \frac{dz_j}{2\pi i z_j} \prod_{i \neq j} \frac{1 - z_i/z_j}{1 - tz_i/z_j} \prod_{\alpha = 1}^n \prod_{i = 1}^k \frac{1 - ty_\alpha/z_i}{1 - y_\alpha/z_i} \prod_{\ell = 1}^n \prod_{i = 1}^k \frac{1 - tq^{-1}z_i/x_\ell}{1 - q^{-1}z_i/x_\ell}. 
\end{equation}
The contour $\gamma$ is a middle-dimensional cycle in the maximal torus of $GL_k$ which is a certain translate of the compact torus. However, actual analysis of the iterated residues becomes more combinatorially involved, which is a direct reflection of the fact that the geometry of $GL_k$-unstable loci for $k > 1$ can be quite intricate. Our approach in Section \ref{geometricdiffeq} avoids this issue by dealing directly with the spaces $X(k, n)$. 

One intuition one gains from the contour integral approach is that for Theorem \ref{flopinv} to work, there needs to be a symmetry between the residues at zero and infinity in the contour integrals. This corresponds to a symmetry between the GIT stratifications for different choices of stability condition. For this to exist, it is essential that $X(k, n)$ satisfies a Calabi-Yau condition. This is the reason the wall-crossing formula for $X(k, n, m)$ in Section \ref{satake} is more complicated. 

\end{appendices}

\newpage

\printbibliography

\end{document}